\definecolor{internallinkcolor}{rgb}{0,.5,0}
\definecolor{externallinkcolor}{rgb}{0,0,.5}
\numberwithin{equation}{section}
\newcommand{\bin}{\operatorname{Bin}}
\renewcommand{\epsilon}{\varepsilon}
\Crefname{fact}{Fact}{Facts}
\Crefname{claim}{Claim}{Claims}
\declaretheorem[parent=section]{theorem}
\declaretheorem[sibling=theorem]{lemma}
\declaretheorem[sibling=theorem]{corollary}
\declaretheorem[sibling=theorem]{claim}
\declaretheorem[sibling=theorem]{proposition}
\declaretheorem[sibling=theorem]{conjecture}
\newcommand{\q}{{\bf q}}
\renewcommand{\eprint}[1]{\href{https://arxiv.org/abs/#1}{arXiv:#1}}
\newcommand{\cA}{\mathcal A}
\newcommand{\cB}{\mathcal B}
\newcommand{\cG}{\mathcal G}
\newcommand{\cM}{\mathcal M}
\newcommand{\bv}{\mathbf v}
\newcommand{\p}{{\bf p}}
\newcommand{\na}{\mathsf{a}}
\DeclareMathOperator{\rank}{rank}
\newcommand{\defn}[1]{{\bfseries #1}}
\newcommand{\eps}{\varepsilon}
\renewcommand{\phi}{\varphi}
\newcommand{\RR}{\mathbb{R}}
\renewcommand{\SS}{\mathbb{S}}
\newcommand{\sm}{\smallsetminus}
\newcommand{\es}{\varnothing}
\newcommand{\vect}{\mathbf}
\newcommand{\trans}{^\mathsf{T}}
\newcommand{\pr}{\mathbb{P}}
\newcommand{\E}{\mathbb{E}}
\newcommand{\whp}[0]{\textbf{whp}}
\newcommand{\Dist}[1]{\mathsf{#1}}
\newcommand{\Bin}{\Dist{Bin}}
\newcommand{\sL}{\mathsf{L}}
\newcommand{\R}{\mathsf{R}}
\newcommand{\Rhat}{\hat{\mathsf{R}}}
\newcommand{\drat}{\Gamma}
\newcommand{\nlb}{\alpha} 
\newcommand{\psb}{\zeta} 
\begin{document}

\title{Rigid partitions: from high connectivity to random graphs}

\author{
  Michael Krivelevich\thanks{
    School of Mathematical Sciences,
    Tel Aviv University,
    Tel Aviv 6997801, Israel.
    Email: \href{mailto:krivelev@tauex.tau.ac.il}
                {\tt krivelev@tauex.tau.ac.il}.
  }
  \and
  Alan Lew\thanks{
    Department of Mathematical Sciences,
    Mellon College of Science,
    Carnegie Mellon University,
    Pittsburgh, PA 15213, USA.
    Email: \href{mailto:alanlew@andrew.cmu.edu}
                {\tt alanlew@andrew.cmu.edu}.
  }
  \and
  Peleg Michaeli\thanks{
    Mathematical Institute,
    University of Oxford,
    Oxford, UK.
    Email: \href{mailto:peleg.michaeli@maths.ox.ac.uk}
                {\tt peleg.michaeli@maths.ox.ac.uk}.
    Research partially supported by ERC Advanced Grant 883810.
  }
}

\date{}

\maketitle

\begin{abstract}
  A graph is called $d$-rigid if there exists a generic embedding of its vertex set into $\mathbb{R}^d$
  such that every continuous motion of the vertices that preserves the lengths of all edges
  actually preserves the distances between all pairs of vertices.
  The rigidity of a graph is the maximal $d$ such that the graph is $d$-rigid.
  We present new sufficient conditions for the $d$-rigidity of a graph
  in terms of the existence of ``rigid partitions'' ---
  partitions of the graph that satisfy certain connectivity properties.
  This extends previous results by Crapo,
  Lindemann,
  and Lew, Nevo, Peled and Raz.

  As an application, we present new results on the rigidity of
  highly-connected graphs, random graphs, random bipartite graphs,
  pseudorandom graphs, and dense graphs.
 In particular, we prove that random $C d\log d$-regular graphs are typically $d$-rigid, demonstrate the existence of a giant $d$-rigid component in sparse random binomial graphs, and show that the rigidity of relatively sparse random binomial bipartite graphs is roughly the same as that of the complete bipartite graph, which we consider an interesting phenomenon. Furthermore, we show that a graph admitting $\binom{d+1}{2}$ disjoint connected dominating sets is $d$-rigid. This implies a weak version of the Lov\'asz--Yemini conjecture on the rigidity of highly-connected graphs.
  We also present
  an alternative short proof for a recent result by Lew, Nevo, Peled, and Raz,
  which asserts that the hitting time for $d$-rigidity in the random graph process
  typically coincides with the hitting time for minimum degree $d$.
\end{abstract}

\section{Introduction}
A \defn{$d$-dimensional framework} is a pair $(G,\p)$ where $G=(V,E)$ is a finite simple graph
and $\p:V\to\RR^d$ is an embedding of its vertices.
A framework is called \defn{rigid} if every continuous motion of the vertices
that preserves the lengths of all the edges does not change the distance between any two vertices.
An embedding of $G=(V,E)$ is \defn{generic} if its $d|V|$ coordinates are
algebraically independent over the rationals.
A framework with a generic embedding will be called a \defn{generic} framework.

We define the \defn{rigidity matrix} $\R(G,\p)\in\RR^{|E|\times d|V|}$
of a framework $(G,\p)$ as follows:
the rows of $\R(G,\p)$ are indexed by the edge set $E$
and the columns are indexed by allocating $d$ coordinates to each vertex of $V$.
The row vector $\vect{r}_e$ indexed by $e=\{u,v\}$ is supported on the coordinates of $u,v$
where it is equal to the $d$-dimensional row vectors $\p(u)-\p(v)$ and $\p(v)-\p(u)$,
respectively. For simplicity, we assume that $G$ has at least $d+1$ vertices.
It is known (see~\cite{AR78}) that the rank of $\R(G,\p)$ is always at most $d|V|-\binom{d+1}{2}$.  The framework $(G,\p)$  is called \defn{infinitesimally rigid} if the rank of $\R(G,\p)$
is exactly $d|V|-\binom{d+1}{2}$.
Infinitesimal rigidity is in general a stronger condition than rigidity;
however, it was shown by Asimow and Roth in~\cite{AR78} that both conditions are equivalent for generic embeddings. 

We say that a graph $G=(V,E)$ is \defn{rigid in $\RR^d$} (or, \defn{$d$-rigid}, for short)
if there exists an embedding $\p:V\to\RR^d$ such that the framework $(G,\p)$ is infinitesimally rigid (or equivalently, if $(G,\p)$ is rigid for all generic embeddings $\p:V\to\RR^d$;
see~\cite{AR78}). We define the \defn{rigidity} of $G$ to be the maximal $d$ such that $G$ is $d$-rigid.
$d$-rigidity is thus a graph-theoretic property;
however, a purely combinatorial characterization remains a major open problem in rigidity theory for $d>2$
(see, e.g.,~\cite{schulze2017rigidity}).

Rigidity theory has a long history, tracing its origins to early results such as 
Cauchy's theorem (see, e.g.,~\cite{AZ}),
and has found applications (especially in two and three dimensions) in various 
fields outside of mathematics (see, e.g., the survey~\cite{SJ17}).
In particular, significant efforts have been directed towards characterising 
$d$-rigid graphs, with seminal works~\cite{PG27}, \cite{Lam70}, and~\cites{AR78,AR79}.
For further reading, we refer the reader to the textbook~\cite{GSS} and the 
survey~\cite{Con93}.

In this paper,
we present a new sufficient condition for $d$-rigidity based on graph partitions.
We apply this to show that high connectivity implies high rigidity,
a first result of its kind for dimensions greater than $2$.
We also utilize it to establish lower bounds for the rigidity of (pseudo)random and dense graphs,
derive a simplified proof for the lower bound of rigidity in complete bipartite graphs,
and examine a notable phenomenon in random bipartite graphs.

\subsection{Rigid partitions}
Let $G=(V,E)$ be a graph.
For pairwise disjoint vertex sets $V_1,\dots,V_k$,
we say that $V_1,\dots,V_k$
is a \defn{partial colouring} of $V$ if $V_1\cup\dots\cup V_k\subseteq V$,
or a \defn{colouring} of $V$ if $V_1\cup\dots\cup V_k=V$.
A \defn{partition} of $V$ is a colouring $V_1,\dots,V_k$ of $V$
in which $V_i\ne\es$ for all $1\le i\le k$.
For two (not necessarily disjoint) vertex sets $A,B\subseteq V$,
we let $E(A, B)$
be the set of edges having one endpoint in $A$ and the other in $B$.
Let $E_1,\dots,E_k$ be a partial colouring of the edge set of $G$,
and let $\hat{E}=E_1\cup\dots\cup E_k$.
We say that $U\subseteq V$ has a \defn{monochromatic cut}
(with respect to the partial colouring)
if there exists a partition $U=U'\cup U''$
such that $\hat{E}\cap E(U',U'')\subseteq E_i$ for some $i\in[k]$.

Now, fix $d\ge 1$,
let $V_1\cup \dots \cup V_{d+1}$ be a colouring of $V$,
and let $\{E_{ij}\}_{1\le i<j\le d+1}$ be a partial colouring of $E$,
for which $e\subseteq V_i\cup V_j$ for all $1\le i<j\le d+1$ and $e\in E_{ij}$.
Suppose the subgraphs $G_{ij}=(V_i\cup V_j,E_{ij})$ are connected
for all $1\le i<j\le d+1$ (where we consider a graph without vertices to be disconnected),
and for every $1\le i\le d+1$ and $U\subseteq V_i$ with $|U|\ge 2$,
$U$ has a monochromatic cut.
Then, we call the pair $\left(\{V_i\}_{i=1}^{d+1}, \{E_{ij}\}_{1\leq i<j\leq d+1}\right)$
a \defn{$d$-rigid partition} of $G$.
For notational convenience, a vertex partition $\{V_i\}_{i=1}^{d+1}$
is also called a $d$-rigid partition
if there exists an associated partial colouring $\{E_{ij}\}_{1\le i<j\le d+1}$
for which the aforementioned pair
is a $d$-rigid partition. Note that in a $d$-rigid partition of $G$ one of the vertex sets $V_i$ may be empty.

Our main theorem states that
any graph that admits a \(d\)-rigid partition is also \(d\)-rigid.

\begin{theorem}\label{thm:rigid:partition}
  A graph that admits a $d$-rigid partition is $d$-rigid.
\end{theorem}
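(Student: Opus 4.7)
The plan is to establish $d$-rigidity by exhibiting an infinitesimally rigid framework on $G$ via a two-scale ``cluster'' embedding. Pick points $p_1,\dots,p_{d+1}\in\RR^d$ in general position (forming a non-degenerate simplex, with $p_i$ arbitrary if $V_i=\es$), a generic map $\q\colon V\to\RR^d$, and a small parameter $\eps>0$, and set $\p(v)=p_i+\eps\,\q(v)$ for every $v\in V_i$. The goal is to show that for all sufficiently small $\eps>0$, the kernel of the rigidity matrix $\R(G,\p)$ has dimension exactly $\binom{d+1}{2}$, which witnesses infinitesimal rigidity of $(G,\p)$ and hence, by Asimow--Roth, the $d$-rigidity of $G$.

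Given a motion $m\in\ker\R(G,\p)$, decompose $m(v)=M_i+\eps\,\mu(v)$ for $v\in V_i$ and expand each rigidity equation $\langle m(u)-m(v),\p(u)-\p(v)\rangle=0$ order-by-order in $\eps$. At zeroth order, every cross-edge $\{u,v\}\in E_{ij}$ with $u\in V_i$, $v\in V_j$ (which exists for every pair of non-empty parts, since $G_{ij}$ is connected on $V_i\cup V_j$) gives $\langle M_i-M_j,p_i-p_j\rangle=0$. These are precisely the infinitesimal rigidity equations of $K_{d+1}$ at the generic simplex $(p_1,\dots,p_{d+1})$, whose solution space is the $\binom{d+1}{2}$-dimensional space of trivial motions of $\RR^d$. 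Subtracting the corresponding trivial motion of $(G,\p)$, I may assume $M_i=0$ throughout, reducing the task to showing that the residual micro-motion $\mu$ vanishes. The surviving constraints read $\langle \mu(u)-\mu(v),p_i-p_j\rangle=0$ from each cross-edge and $\langle \mu(u)-\mu(v),\q(u)-\q(v)\rangle=0$ from each intra-cluster edge.

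The heart of the argument is then an inductive analysis within each cluster. I would prove, by induction on $|U|$ for $U\subseteq V_i$, that $\mu$ is constant on $U$. For $|U|\ge 2$, invoking the monochromatic cut condition yields a split $U=U'\cup U''$ with all $\hat E$-edges across the cut lying in a single colour class $E_{i\ell}$; by the inductive hypothesis $\mu$ is constant on $U'$ and on $U''$, and the cut edges of colour $(i,\ell)$, combined with cross-edges from $V_i$ to $V_\ell$ (guaranteed by connectivity of $G_{i\ell}$) and the genericity of the embedding, pin those two constants to each other. Once $\mu$ is constant on every $V_i$, the cross-edge constraints between $V_i$ and the remaining parts force the constant differences to be orthogonal to each $p_i-p_j$; since $\{p_i-p_j\}_{j\neq i}$ spans $\RR^d$ by genericity of the simplex, it follows that $\mu\equiv 0$, as required.

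The main obstacle is the cluster-level induction: cross-edge constraints only deliver orthogonality with respect to the single direction $p_i-p_\ell$, while intra-cluster edges involve generic directions $\q(u)-\q(v)$. The monochromatic cut condition is exactly the combinatorial mechanism that ensures, at each inductive step, all cut edges align with a single colour, so the reduction from $U$ to $(U',U'')$ is compatible with the available directional constraints rather than being polluted by competing ones. Connectivity of the $G_{ij}$'s supplies the necessary propagation across clusters, and genericity of $(p_i)$ and $\q$ rules out degenerate configurations; carefully tracking these ingredients is the technical crux of the proof.
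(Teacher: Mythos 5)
Your two-scale cluster embedding is genuinely close in spirit to what the paper does, and the zeroth-order reduction is right: the cross-edge constraints at leading order are exactly the infinitesimal rigidity equations of $K_{d+1}$ at the simplex $(p_1,\dots,p_{d+1})$, so you can subtract a trivial motion and assume $M_i\equiv 0$. The problem is with the cluster-level analysis, and it is a real gap, not just a matter of polish.

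Your inductive claim — that $\mu$ is constant on every $U\subseteq V_i$ — does not follow from the constraints available at each step. After the monochromatic cut $U=U'\cup U''$ with cut colour $E_{i\ell}$, a cut edge $\{u,v\}$ yields $\langle c'-c'',\,\q(u)-\q(v)\rangle=0$, which is one linear condition on $c'-c''\in\RR^d$. Cross-edges from $V_i$ to $V_\ell$ can supply at best one further independent condition, in the fixed direction $p_i-p_\ell$, and even that requires $U'$ and $U''$ to have cross-neighbours in $V_\ell$ with matching $\mu$-values, which connectivity of $G_{i\ell}$ on $V_i\cup V_\ell$ does not guarantee (a subset $U\subsetneq V_i$ need not be adjacent to $V_\ell$ at all). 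So for $d\ge 3$ you have at most two conditions on a vector in $\RR^d$, and the induction stalls. Genericity of $\q$ does not rescue this: in your limit framework the intra-cluster constraints are exactly the generic rigidity constraints of $(V_i,\hat E\cap E(V_i))$, a graph that need not be $d$-rigid, so its kernel can be much larger than constants — e.g. for $V_i$ carrying a spanning tree, the kernel of those constraints alone has dimension $(d-1)|V_i|+1$.

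The paper avoids this by not stopping at a two-scale embedding. Its combing lemma (\cref{lemma:combing}) uses the monochromatic cut condition to build a \emph{multi-scale} limit framework: within each $V_i$ the cut $U=U'\cup U''$ of colour $\ell$ separates $U'$ from $U''$ along the fixed simplex direction $y_{i\ell}$, and this is iterated recursively at ever finer scales. In the resulting limit, every edge in $E_{ij}$ — cross-cluster and intra-cluster alike — points in $\pm y_{ij}$. This alignment is what makes the argument close: the lower stiffness matrix then splits as $\tfrac12 M+\tfrac12 T$ with $M$ a signed Laplacian (hence PSD) and $T$ block diagonal with blocks equal to the graph Laplacians of the connected $G_{ij}$, after which Weyl's inequality and \cref{lemma:block_diagonal} finish the job. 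Your generic intra-cluster directions destroy exactly the angle structure ($y_{ij}\cdot y_{ik}=1/2$) that this decomposition relies on, and there is no obvious replacement. Finally, even setting the cluster analysis aside, the ``expand order-by-order in $\eps$'' step is not a proof for a fixed embedding: to make it rigorous one passes to the normalized rigidity matrix, takes the $\eps\to 0$ limit, and uses lower semicontinuity of rank — which is precisely the limit-framework machinery of \cref{sec:limit:frameworks} that you would end up reinventing.
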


The case $d=2$ of \cref{thm:rigid:partition} was addressed by Crapo~\cite{Cra90},
who proved that a graph is $2$-rigid if and only if it admits a $2$-rigid partition (in fact, Crapo used a different but equivalent notion of ``proper $3T2$ decompositions").
For $d=3$,
Lindemann provided a sufficient condition for $3$-rigidity in his PhD thesis~\cite{LinPhD}. His condition is described in different terms, using the notion of ``proper $d$-framework decompositions'' (which extends Crapo's notion of $3T2$ decompositions in the $d=2$ case), but it can be shown to be equivalent to the condition stated in \cref{thm:rigid:partition}.
Lindemann conjectured (using a different terminology; see \cite{LinPhD}*{Conjecture~5.6}) that the existence of a $3$-rigid partition characterizes  $3$-rigid graphs, and asked whether the same holds for general $d$. We show here that, for some values of $d$ (and assuming the validity of certain conjectures on the rigidity of specific families of graphs, for all large enough $d$), the existence of a $d$-rigid partition is \emph{not} a necessary condition for $d$-rigidity
(see discussion in \cref{sec:remarks}).

Recently,
Jord\'an and Tanigawa~\cite{JT22} introduced the \defn{$d$-dimensional algebraic connectivity}
$\na_d(G)$ of a graph $G$,
which is a high dimensional generalisation of the algebraic connectivity of $G$
introduced by Fiedler in~\cite{Fie73} (corresponding to the $d=1$ case),
and can be seen as a quantitative measure of the $d$-rigidity of $G$
(see \cref{sec:stiffness} for a precise definition).
Using this notion, we establish here a ``quantitative'' version of \cref{thm:rigid:partition}
(see \cref{thm:rigid:partition:quant}),
which directly implies \cref{thm:rigid:partition}.

In practice, it is often difficult to apply \cref{thm:rigid:partition} directly.
Therefore, we introduce a few concrete graph partitions that are more applicable.
Let $G=(V,E)$ be a graph.
A \defn{dominating set} for $G$ is a subset $S$ of its vertices,
such that any vertex of $G$ is either in $S$,
or has a neighbour in $S$. We say that a vertex set $S$ is \defn{connected} if the induced graph $G[S]$ is connected.
A \defn{connected dominating set} (CDS) is a dominating set that is also connected.
A \defn{CDS partition of size $k$} is a partition $V_1,\dots,V_k$ of $V$
such that each $V_i$ is a CDS. Note that, since a set containing a CDS is also a CDS, the existence of $k$ vertex disjoint connected dominating sets implies the existence of a CDS partition of size $k$.
Our first application of \cref{thm:rigid:partition} is the following.
\begin{corollary}\label{cor:cdsp:rigid}
  A graph that admits a CDS partition of size $\binom{d+1}{2}$ is $d$-rigid.
\end{corollary}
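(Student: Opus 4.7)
The plan is to apply \cref{thm:rigid:partition} by constructing an explicit $d$-rigid partition of $G$ from the given CDS partition. Index the $\binom{d+1}{2}$ connected dominating sets by the edges of $K_{d+1}$, writing them as $S_{\{i,j\}}$ for $1\le i<j\le d+1$. The vertex partition would send every vertex of $S_{\{i,j\}}$ to the class with the smaller index in its pair, so that
\[
V_k = \bigsqcup_{l>k} S_{\{k,l\}} \text{ for } k\le d, \qquad V_{d+1} = \es.
\]

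For each pair $(k,l)$ with $k<l\le d+1$, I would define $E_{kl}$ as a union of four kinds of edges: (i) a spanning tree $T_{\{k,l\}}$ of $G[S_{\{k,l\}}]$; (ii) for each $w\in V_l$, a single edge from $w$ to a neighbour in $S_{\{k,l\}}$ (which exists by dominance); (iii) for each $v\in S_{\{k,m\}}\subseteq V_k$ with $k<m<l$, a single intra-$V_k$ edge from $v$ to a neighbour in $S_{\{k,l\}}$; and (iv) when $l\le d$, for each $v\in S_{\{k,m\}}\subseteq V_k$ with $m>l$, a single bipartite edge from $v$ to a neighbour in $V_l$ (which exists by dominance of the CDSs composing $V_l$). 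The key design choice is a \emph{max rule}: every intra-$V_k$ edge between $S_{\{k,m_1\}}$ and $S_{\{k,m_2\}}$ with $m_1<m_2$ is placed only in $E_{k,m_2}$.

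The three conditions of a $d$-rigid partition would then be verified in turn. Connectivity of $G_{kl}=(V_k\cup V_l,E_{kl})$ follows because the spanning tree connects $S_{\{k,l\}}$ internally, while types (ii) and (iii) attach $V_l$ and the small-index part of $V_k\setminus S_{\{k,l\}}$ directly to the tree, and type (iv) detours the remaining large-index vertices of $V_k$ through $V_l$. Pairwise edge-disjointness of the $E_{kl}$ reduces to three easy checks: type (i) edges lie in pairwise disjoint CDSs, types (ii) and (iv) are bipartite edges between distinct classes (which uniquely determines the pair), and the max rule disambiguates type (iii). For the monochromatic-cut condition applied to $U\subseteq V_k$ with $|U|\ge 2$, if $U$ lies inside a single sub-CDS $S_{\{k,l\}}$, the coloured intra-$U$ edges form a sub-forest of $T_{\{k,l\}}$ and a leaf cut suffices; otherwise, letting $l^\ast=\max\{l:U\cap S_{\{k,l\}}\ne\es\}$ and taking $U'=U\cap S_{\{k,l^\ast\}}$, $U''=U\setminus U'$, every coloured crossing edge is intra-$V_k$ between $S_{\{k,l^\ast\}}$ and some $S_{\{k,l'\}}$ with $l'<l^\ast$, hence by the max rule lies in $E_{k,l^\ast}$.

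The main obstacle is the tension between edge-disjointness and connectivity inside each $E_{kl}$. The max rule is precisely what renders the monochromatic-cut condition automatic, but it also removes every direct intra-$V_k$ arm into $S_{\{k,l\}}$ coming from a CDS of larger index; this would leave the corresponding vertices of $V_k$ disconnected inside $E_{kl}$. The type (iv) bipartite detours are introduced to bypass exactly this issue, and the delicate point of the construction is verifying that these detours always exist, which comes down to the dominance of the CDSs composing $V_l$.
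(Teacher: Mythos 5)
Your construction is correct, but it takes a genuinely different route from the paper's. Both strategies produce a $d$-rigid partition and invoke \cref{thm:rigid:partition}; the differences lie in the vertex assignment, the edge colouring, and the verification of the monochromatic-cut condition. You send $S_{\{i,j\}}$ to $V_{\min(i,j)}$, leaving $V_{d+1}=\es$, and then build each $E_{kl}$ out of a spanning tree of $G[S_{\{k,l\}}]$ plus a minimal family of single linking edges (types (ii)--(iv)), governed by a ``max rule'' and requiring a detour through $V_l$ to recover connectivity. The paper instead sends $A_{ij}$ to $V_{\max(i,j)}$ (with a small swap for indices $1,2,3$ so that no class is empty), and takes the generous colouring $E_{ij}=E(V_i,V_j)\cup E(A_{ij})$. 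That generosity makes all three verifications short: connectivity of $G_{ij}$ is immediate from domination (every vertex of $V_i$ reaches $A_{ij}$ by one edge, every vertex of $V_j$ reaches $V_i$ by one edge), disjointness of the $E_{ij}$ requires no special rule, and the monochromatic cut is simply a singleton cut $U'=\{u\}$ for any $u\in U$, since the only coloured edges inside $V_i$ touching $u$ are in $E(A_{ij})\subseteq E_{ij}$. Your parsimonious colouring is not wrong --- I checked disjointness, connectivity (including the $l=d+1$ and $d=1$ degenerate cases), and both branches of the monochromatic-cut argument, and they all go through --- but it introduces exactly the ``tension between edge-disjointness and connectivity'' you describe, which the paper sidesteps entirely by not trying to minimise the edge colouring. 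If you wanted a quantitative bound on $\na_d(G)$ via \cref{thm:rigid:partition:quant}, the paper's richer $E_{ij}$ would also give better algebraic connectivity of the pieces $G_{ij}$, so parsimony buys nothing here.
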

A proof of \cref{cor:cdsp:rigid} appears in \cref{sec:rigid:partitions:special}.
We use \cref{cor:cdsp:rigid} to prove that highly-connected graphs are highly-rigid.
Let us make this statement precise.
It is well known (see e.g. \cite{JJ05}) that $d$-rigid graphs are $d$-connected.
Lov\'asz and Yemini~\cite{LY82} conjectured in 1982 that there exists a (smallest) integer $c_d$,
possibly $c_d=d(d+1)$, such that every $c_d$--connected graph is $d$-rigid.
The $d=2$ case was solved by Lov\'asz and Yemini in \cite{LY82}.
Some progress has been made in recent years in the $d>2$ case
(see e.g. \cites{Jor12,JT22pow} for related results on powers of graphs, and \cite{JJ05} for a partial result in the case $d=3$). In particular, Clinch, Jackson and Tanigawa proved in \cite{clinch2022abstract} an analogous version of the conjecture for the ``$C_2^1$-cofactor matroid", which is conjectured to be equal to the generic $3$-rigidity matroid
(\cite{whiteley1995some}*{Conjecture~10.3.2}). As an application of \cref{cor:cdsp:rigid}, we prove the following.

\begin{theorem}[Highly connected graphs]\label{thm:connectivity}
  There exists $C>0$ such that for all $d=d(n)\geq 1$,
  every $n$-vertex $Cd^2{\log^2}{n}$-connected graph is $d$-rigid.
\end{theorem}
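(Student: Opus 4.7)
By \cref{cor:cdsp:rigid}, to show that $G$ is $d$-rigid it suffices to exhibit a CDS partition of $V(G)$ of size $\binom{d+1}{2}$. Equivalently, it suffices to find $\binom{d+1}{2}$ pairwise vertex-disjoint CDSs in $G$, because any leftover vertices can be absorbed into one of the CDSs without destroying the CDS property: augmenting a CDS $D$ by a vertex $v\notin D$ preserves domination trivially, and preserves connectivity because, since $D$ dominates $V$, the vertex $v$ has a neighbour in $D$, so $v$ joins the existing connected subgraph $G[D]$. Thus the theorem reduces to a CDS-packing statement for highly-connected graphs.

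The plan is to invoke the following packing lemma: there exists an absolute constant $c>0$ such that every $n$-vertex, $k$-vertex-connected graph contains at least $\lfloor ck/\log^2 n\rfloor$ pairwise vertex-disjoint connected dominating sets. Given this, choosing $C\ge 2/c$ in the hypothesis ensures that a $Cd^2\log^2 n$-connected graph on $n$ vertices admits at least $d^2\ge\binom{d+1}{2}$ vertex-disjoint CDSs, and applying \cref{cor:cdsp:rigid} yields $d$-rigidity. Packing results of this flavour appear in the connectivity-based decomposition literature (e.g.\ work of Censor-Hillel, Ghaffari, and Kuhn), and can also be proved directly, which is the technical heart of the argument.

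A natural direct proof proceeds by partitioning $V$ uniformly at random into $t:=\binom{d+1}{2}$ classes $V_1,\ldots,V_t$, each vertex receiving an independently uniform colour in $[t]$. Domination is routine: for each pair $(v,i)$, $\Pr[V_i\cap N[v]=\varnothing]\le(1-1/t)^{k+1}\le \exp(-k/t)=n^{-\omega(1)}$, which comfortably survives a union bound over the $nt$ pairs. The delicate step is to show $G[V_i]$ is connected. If not, then some component $C$ of $G[V_i]$ with $|C|\le|V_i|/2$ has $\partial_G(C):=N_G(C)\setminus C\subseteq V\setminus V_i$, and moreover $|\partial_G(C)|\ge k$: indeed $\partial_G(C)$ is a vertex cut separating $C$ from $V_i\setminus C\subseteq V\setminus(C\cup\partial_G(C))$, which is nonempty since $G[V_i]$ is assumed to be disconnected. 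The probability of any fixed such event is at most $(1-1/t)^k\le\exp(-k/t)=n^{-\Omega(\log n)}$, but one must union-bound over candidate components $C$; enumerating these via BFS-tree encodings of connected subsets of $G$ is exactly what eats the extra logarithmic factor, producing the $\log^2 n$ dependence in the hypothesis. This bookkeeping step, together with extracting a clean CDS-packing bound from the random partition, is in my view the main obstacle.
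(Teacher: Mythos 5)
Your proposal follows the paper's proof exactly: reduce to a CDS partition of size $\binom{d+1}{2}$ via \cref{cor:cdsp:rigid} (including the same observation that disjoint CDSs can be padded into a partition), and then invoke the Censor-Hillel--Ghaffari--Giakkoupis--Haeupler--Kuhn theorem that $k$-connected graphs on $n$ vertices admit $\Omega(k/\log^2 n)$ vertex-disjoint connected dominating sets. One caveat worth flagging: the "direct proof" you sketch at the end is not a substitute for that theorem---a uniform random partition into $\binom{d+1}{2}$ parts does not obviously yield connected classes, and the union bound over connected vertex subsets is not controllable in the way you suggest (the actual CGGHK argument is substantially more intricate, using fractional CDS packings and dominating-tree sampling rather than a single random colouring)---but since you correctly identify the cited result as the intended route, this does not affect the validity of the proposal.
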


The proof of \cref{thm:connectivity} follows by combining \cref{cor:cdsp:rigid} with a result
of Censor-Hillel, Ghaffari, Giakkoupis, Haeupler and Kuhn~\cite{CGGHK17}
that states that every $k$-connected graph has a CDS partition of size $\Omega(k/{\log^2}{n})$;
see \cref{sec:conn}.
We wish to highlight that shortly after making our manuscript available online,
the full resolution of the Lov\'asz--Yemini conjecture
was announced by Vill\'anyi~\cite{villanyi2023+every}.
Since every graph admitting a CDS partition of size $k$
is either complete or $k$-connected (as shown in~\cite{Zel86}),
Vill\'anyi's results imply a weaker version of \cref{cor:cdsp:rigid},
requiring a partition of size $d(d+1)$
rather than $\binom{d+1}{2}$.

We proceed by presenting a few additional useful rigid partition.
For two (not necessarily disjoint) vertex sets $A,B\subseteq V$,
write $G[A,B]$ for the subgraph of $G$ on the vertex set $A\cup B$
with the edge set $E(A,B)$.
We say that a partition $V_1,\dots,V_d$ of $V$
is a \defn{(type I) strong $d$-rigid partition} of $G$,
and that $G$ {\em admits} a (type I) strong $d$-rigid partition,
if $G[V_i,V_j]$ is connected for all $1\leq i\leq j\leq d$.
Throughout this paper we will mostly work with this definition,
and omit the ``type I'' prefix.
We say that a partition $V_1,\dots,V_{d+1}$ of $V$
is a \defn{type II strong $d$-rigid partition} of $G$,
and that $G$ {\em admits} a (type II) strong $d$-rigid partition,
if $G[V_i,V_j]$ is connected for all $1\le i<j\le d+1$.
For strong $d$-rigid partitions of type II, unlike type I, there is no connectivity requirement for the induced subgraphs $G[V_i,V_i]=G[V_i]$, but a partition into $d+1$ parts is necessary, as opposed to the $d$ parts required in type I. It is important to note that neither definition implies the other; however, both types qualify as rigid partitions, as detailed in \cref{sec:rigid:partitions:special}.

\begin{corollary}\label{cor:strong:rigid:partition}
  A graph that admits a (type I or type II) strong $d$-rigid partition is $d$-rigid.
\end{corollary}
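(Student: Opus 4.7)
The plan is to derive \cref{cor:strong:rigid:partition} directly from \cref{thm:rigid:partition} by exhibiting, in each case, an edge colouring (and, for type I, an empty padding class) so that the resulting data form a $d$-rigid partition in the sense of the earlier definition. The common principle is that once every edge running between two distinct parts is placed in its own colour class, and intra-part edges are either uncoloured or confined to a single designated class, the monochromatic cut condition becomes essentially vacuous.

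For a type II strong $d$-rigid partition $V_1,\dots,V_{d+1}$, I would set $E_{ij}:=E(V_i,V_j)$ for $1\le i<j\le d+1$ and leave all edges with both endpoints in the same $V_i$ uncoloured. Each $G_{ij}=G[V_i,V_j]$ is then connected by hypothesis. Every edge of $\hat E=\bigcup_{i<j}E_{ij}$ has its endpoints in two distinct parts, so for any $U\subseteq V_i$ with $|U|\ge 2$ and any bipartition $U=U'\cup U''$, we have $\hat E\cap E(U',U'')=\emptyset$, which lies trivially in any $E_{ij}$. The monochromatic cut condition therefore holds, and \cref{thm:rigid:partition} yields $d$-rigidity.

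For a type I strong $d$-rigid partition $V_1,\dots,V_d$, I would introduce an empty class $V_{d+1}:=\emptyset$ (which is permissible by the remark following the definition of a $d$-rigid partition), set $E_{ij}:=E(V_i,V_j)$ for $1\le i<j\le d$, and take $E_{i,d+1}$ to be the edge set of an arbitrary spanning tree of $G[V_i]=G[V_i,V_i]$, which is connected by hypothesis. Then $G_{ij}=G[V_i,V_j]$ is connected for $i<j\le d$, and $G_{i,d+1}$ is the spanning tree on $V_i\cup V_{d+1}=V_i$, hence also connected. For $U\subseteq V_i$ with $|U|\ge 2$, every $\hat E$-edge with both endpoints in $V_i$ lies in $E_{i,d+1}$ and in no other class, so for any bipartition $U=U'\cup U''$ we have $\hat E\cap E(U',U'')\subseteq E_{i,d+1}$, verifying the monochromatic cut condition. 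No condition involving $V_{d+1}$ needs to be checked since that class is empty.

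I do not foresee a genuine obstacle: the entire content of the argument is the observation that segregating intra-part edges into a single colour class (or omitting them altogether) trivialises the monochromatic cut condition in the base definition. The only minor point requiring care is, in the type I case, the use of an empty vertex class, which is explicitly permitted by the formulation of a $d$-rigid partition.
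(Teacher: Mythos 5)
Your proposal is correct and follows essentially the same route as the paper: it reduces each type of strong $d$-rigid partition to a $d$-rigid partition by colouring inter-part edges $E(V_i,V_j)$ as $E_{ij}$ and, in the type I case, padding with $V_{d+1}=\emptyset$ and assigning intra-part edges to $E_{i,d+1}$, after which the monochromatic cut condition trivialises exactly as you observe. The only cosmetic difference is that you take $E_{i,d+1}$ to be a spanning tree of $G[V_i]$ whereas the paper simply sets $E_{i,d+1}=E(V_i)$; both choices make $G_{i,d+1}$ connected and make the monochromatic cut condition vacuous, so the two proofs are essentially the same.
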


A quantitative version of the ``type I'' case in \cref{cor:strong:rigid:partition}
has recently been proved by the second author with Nevo, Peled and Raz~\cite{LNPR23+}*{Theorem~1.3}.
Indeed, our proof of \cref{thm:rigid:partition:quant}
relies on an extension of their arguments. 

We would like to highlight a few limitations of proving $d$-rigidity of a graph
by showing that it admits a strong $d$-rigid partition.
The \defn{domination number} of $G$, denoted by $\gamma(G)$,
is the number of vertices in a smallest dominating set for $G$.
Observe that if an $n$-vertex graph $G$ admits a strong $d$-rigid partition
then $\gamma(G)\le n/d$.
Thus, this method can only be used to prove that $G$ is $d$-rigid
for $d\le n/\gamma(G)$.
A second limitation is that bipartite graphs do not admit strong $d$-rigid partitions for $d\ge 3$.
Indeed, let $G=(A\cup B,E)$ be a bipartite graph with bipartition $(A,B)$,
and let $V_1,\dots,V_d$ be a partition of $V$ for $d\ge 3$.
If there exists $1\le i<j \le d$ such that $V_i,V_j$ are both in $A$ or in $B$
then $G[V_i,V_j]$ is not connected, hence this is not a strong $d$-rigid partition.
Otherwise, there exists $1\le i\le d$ such that $V_i$ intersects both $A$ and $B$.
Thus, for $j\ne i$, $G[V_i,V_j]$ is not connected,
and again, $V_1,\dots,V_d$ is not a strong $d$-rigid partition.
Nevertheless, bipartite graphs may be $d$-rigid for arbitrarily large $d$.
This motivates the following bipartite variant of the definition above.

Let $G=(A\cup B,E)$ be a bipartite graph with bipartition $(A,B)$. Let $V_1,\ldots, V_{d+1}$ be a partition of $V=A\cup B$. Write $A_i=V_i\cap A$ and and $B_i=V_i\cap B$ for $1\le i\le d+1$.
We say that the partition is a \defn{strong bipartite $d$-rigid partition} of $G$,
and that $G$ {\em admits} a strong bipartite $d$-rigid partition,
if the following conditions hold:
$G[A_i,B_j]$ is connected for all $i\ne j$,
and there exists a sequence $0\leq s_1\leq s_2\leq \cdots\leq s_{d+1}$
satisfying $\sum_{i=1}^k s_i\geq \binom{k}{2}$ for all $1\leq k\leq d+1$,
such that for every $1\leq i\leq d+1$, $G[A_i,B_i]$ contains a forest with $s_i$ edges.

\begin{corollary}\label{cor:strong:bipartite:rigid:partition}
  A bipartite graph that admits a strong bipartite $d$-rigid partition is $d$-rigid.
\end{corollary}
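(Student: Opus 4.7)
The plan is to derive a $d$-rigid partition of $G$ in the sense of \cref{thm:rigid:partition} from the given strong bipartite $d$-rigid partition, and then invoke \cref{thm:rigid:partition} directly. Fix a bipartite partition $V_1, \dots, V_{d+1}$ of $G=(A\cup B,E)$, a non-decreasing sequence $(s_i)_{i=1}^{d+1}$ with $\sum_{i=1}^k s_i \geq \binom{k}{2}$ for each $k$, and a forest $F_i \subseteq E(G[A_i,B_i])$ with $|F_i|=s_i$ for each $i$.

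The first task is to distribute the $\binom{d+1}{2}$ unordered index pairs $\{i,j\}$ among the forests: I want to choose, for each pair, a distinct edge $e_{ij} \in F_i \cup F_j$. This is a bipartite matching problem with pairs on one side and forest edges on the other, and its Hall condition reads $\binom{|S|}{2} \leq \sum_{i \in S} s_i$ for every $S \subseteq [d+1]$. Because the $s_i$ are non-decreasing, the minimum of $\sum_{i\in S} s_i$ over all $|S|=k$ is $\sum_{i=1}^{k} s_i$, which is at least $\binom{k}{2}$ by hypothesis, so Hall's theorem produces the selection.

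With the $e_{ij}$'s in hand, I would set
\[
E_{ij} := E(A_i,B_j) \cup E(A_j,B_i) \cup \{e_{ij}\}, \qquad 1\leq i<j\leq d+1,
\]
and check the axioms of a $d$-rigid partition. Disjointness of the $E_{ij}$'s is immediate: a cross-part edge is classified by the pair of parts containing its endpoints (bipartiteness), and the $e_{ij}$'s are distinct by the matching. For the connectivity of $G_{ij}=(V_i\cup V_j,E_{ij})$, the given subgraphs $G[A_i,B_j]$ and $G[A_j,B_i]$ are connected and span the complementary vertex sets $A_i\cup B_j$ and $A_j\cup B_i$ of $V_i\cup V_j$; the edge $e_{ij}$, lying in either $G[A_i,B_i]$ or $G[A_j,B_j]$, has one endpoint in each side and bridges the two components.

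The remaining step, which I expect to be the main obstacle, is the monochromatic-cut condition for $U \subseteq V_i$ with $|U|\geq 2$. Because $G$ is bipartite and the $V_j$'s are pairwise disjoint, the only edges of $\hat E=\bigcup_{i<j}E_{ij}$ inside $V_i$ are the assigned forest edges $H_i := \{e_{ij}: e_{ij}\in F_i\}$, which form a subforest of $F_i$. If $H_i[U]$ has no edge, then the partition $(\{v\},U\setminus\{v\})$ for any $v\in U$ yields an empty, hence monochromatic, cut. Otherwise, I would let $v$ be a leaf of a non-trivial component of $H_i[U]$; the same partition then crosses exactly one edge of $\hat E$, namely the unique $H_i[U]$-edge incident to $v$, which lies in a single $E_{ij}$ by construction. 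These verifications show that $(\{V_i\},\{E_{ij}\})$ is a $d$-rigid partition, and \cref{thm:rigid:partition} concludes that $G$ is $d$-rigid.
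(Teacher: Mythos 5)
Your proof is correct and takes essentially the same route as the paper: construct a $d$-rigid partition with $E_{ij}=E(V_i,V_j)\cup\{e_{ij}\}$ for a system of distinct forest edges $e_{ij}\in F_i\cup F_j$, and invoke \cref{thm:rigid:partition}. The only difference is that you select the $e_{ij}$'s via Hall's theorem, whereas the paper uses Landau's tournament degree-sequence theorem (after normalizing $\sum s_i=\binom{d+1}{2}$); both produce the same kind of assignment, and your subsequent verifications of connectivity and of the monochromatic-cut condition match the paper's in spirit.
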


Whiteley~\cite{Whi84}
(see also~\cite{Jor22}*{Theorem~1.2} and \cite{Nix21}*{Corollary~4}),
and independently Raymond~\cite{Ray84},
both relying on previous work by Bolker and Roth~\cite{BR80}, showed that,
for $m,n\geq 2$ and $d\geq 1$,
the complete bipartite graph $K_{m,n}$ is $d$-rigid
if and only if $m,n\geq d+1$ and $m+n\geq \binom{d+2}{2}$.
We show here that, in fact,
any complete bipartite graph $K_{m,n}$ satisfying these conditions
admits a strong bipartite $d$-rigid partition.

\begin{proposition}\label{prop:complete:bipartite}
  Let $d\ge 1$.
  If $m,n\geq d+1$ and $m+n\geq \binom{d+2}{2}$,
  then $K_{m,n}$ admits a strong bipartite $d$-rigid partition.
\end{proposition}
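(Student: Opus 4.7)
The plan is to construct an explicit partition $V_1,\ldots,V_{d+1}$ of the vertex set of $K_{m,n}$ and verify both conditions of a strong bipartite $d$-rigid partition directly. Since $G=K_{m,n}$ is complete bipartite, $G[A_i,B_j]$ is isomorphic to the complete bipartite graph $K_{|A_i|,|B_j|}$ for all $i,j$, so condition (i) holds as soon as every $A_i$ and every $B_i$ is nonempty. Moreover, when $A_i,B_i\ne\es$, the graph $G[A_i,B_i]\cong K_{|A_i|,|B_i|}$ is connected and contains a spanning tree with $|V_i|-1$ edges, so we may take $s_i=|V_i|-1$. Therefore it suffices to exhibit sizes $|V_1|\le|V_2|\le\cdots\le|V_{d+1}|$, each at least $2$, summing to $m+n$, and satisfying $\sum_{i=1}^k|V_i|\ge\binom{k+1}{2}$ for all $1\le k\le d+1$, together with a feasible split $|A_i|+|B_i|=|V_i|$ with $|A_i|,|B_i|\ge 1$, $\sum|A_i|=m$, and $\sum|B_i|=n$.

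I will take $|V_i|=\max(2,i)$ for $1\le i\le d$ and $|V_{d+1}|=(m+n)-\sum_{i=1}^d\max(2,i)$. A direct calculation gives $\sum_{i=1}^d\max(2,i)=\binom{d+1}{2}+1$ for $d\ge 2$ (and equals $2$ for $d=1$), so using $m+n\ge\binom{d+2}{2}$ and $m,n\ge d+1$ one obtains $|V_{d+1}|\ge\max(2,d)$, which both preserves the sorted order and keeps $|V_{d+1}|\ge 2$. The partial sums equal $2$ and $4$ for $k=1,2$ and $\binom{k+1}{2}+1$ for $3\le k\le d$, each comfortably above $\binom{k+1}{2}$, and the $k=d+1$ case is precisely the assumption $m+n\ge\binom{d+2}{2}$. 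For the split of each $V_i$: since $|V_i|\ge 2$ we may take any $|A_i|\in\{1,\ldots,|V_i|-1\}$, and the attainable totals $\sum|A_i|$ form the integer interval $[d+1,(m+n)-(d+1)]$, which contains $m$ by the hypotheses $m,n\ge d+1$. Hence a valid split exists, and any assignment of the $A$- and $B$-vertices respecting these sizes yields the desired partition.

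The only delicate point is the slightly asymmetric choice $|V_1|=|V_2|=2$ rather than the naive sequence $|V_i|=i$. One cannot set $|V_1|=1$, since this would force either $A_1$ or $B_1$ to be empty, and then condition (i) would demand that all other parts be singletons on the opposite side, which is incompatible with the general hypothesis $m,n\ge d+1$. The swap from $(1,2,3,\ldots,d+1)$ to $(2,2,3,\ldots,d,|V_{d+1}|)$ only increases the leading partial sums, so all the required inequalities continue to hold, with the bound at $k=d+1$ matching the hypothesis $m+n\ge\binom{d+2}{2}$ exactly in the extremal case. Everything else is bookkeeping.
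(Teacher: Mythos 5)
Your proposal is correct and takes essentially the same approach as the paper: both proofs pick an explicit non-decreasing sequence of sizes for the parts $V_i$ (the paper first fixes forest sizes $s_i$ with $s_1=s_2=s_3=1$, $s_i=i-1$ for $i>3$, and then takes $|V_i|\geq s_i+1$; you fix $|V_i|=\max(2,i)$ and let $s_i=|V_i|-1$), verify the partial-sum inequalities and the split between the two sides of the bipartition, and use that every nonempty $G[A_i,B_j]$ in $K_{m,n}$ is complete bipartite hence connected. The minor side remark that $|V_1|=1$ is ``incompatible with $m,n\geq d+1$'' is slightly overstated (it would only force $m=d+1$ exactly, not contradict the hypothesis), but this does not affect the validity of your construction.
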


Combining \cref{cor:strong:bipartite:rigid:partition,prop:complete:bipartite},
we obtain a new simple proof of the sufficient condition for the $d$-rigidity of complete bipartite graphs. 
The proofs appear in \cref{sec:rigid:partitions:special,sec:bipartite},
respectively.

\subsection{Applications to random and pseudorandom graphs}
We present several applications of
\cref{cor:strong:rigid:partition,cor:strong:bipartite:rigid:partition}
to random and pseudorandom graphs.
For all instances where we apply \cref{cor:strong:rigid:partition},
we demonstrate that a particular graph (typically) admits a (type I) strong $d$-rigid partition.
In these scenarios, similar reasoning,
which we forego detailing,
can be used to show that the graph also admits a type II strong $d$-rigid partition.

Our first application is for pseudorandom graphs,
for which we are going to use the following linear-algebraic definition. Let $\lambda_1(G)\ge \lambda_2(G)\ge \cdots \geq \lambda_n(G)$ be the eigenvalues of the adjacency matrix of a graph $G$. 
An $n$-vertex $k$-regular graph $G$ is an \defn{$(n, k, \lambda)$-graph} if $\max\{|\lambda_2(G)|,|\lambda_n(G)|\}\leq \lambda$.
 For a comprehensive review on pseudorandom graphs, see~\cite{KS06}.
\begin{theorem}[Pseudorandom graphs]\label{thm:nkl}
  There exists $C>1$ such that if
  $G$ is an $(n,k,\lambda)$-graph with
  $k\ge\max\{9d\lambda,Cd\log{d}\}$
  then $G$
  admits a strong $d$-rigid partition.
\end{theorem}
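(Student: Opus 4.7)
The plan is to directly exhibit a (type I) strong $d$-rigid partition of $G$ via a probabilistic argument, and then invoke \cref{cor:strong:rigid:partition}. Sample a uniformly random equipartition $V=V_1\sqcup\cdots\sqcup V_d$ with $|V_i|\in\{\lfloor n/d\rfloor,\lceil n/d\rceil\}$, and verify with positive probability that $G[V_i,V_j]$ is connected for every $1\le i\le j\le d$ (so in particular each induced subgraph $G[V_i]=G[V_i,V_i]$ is connected). The $(n,k,\lambda)$-assumption enters through the expander mixing lemma: for all $A,B\subseteq V$,
\[
\left|e(A,B)-\tfrac{k|A||B|}{n}\right|\le\lambda\sqrt{|A||B|},
\]
so $e(A,B)>0$ whenever $\sqrt{|A||B|}>\lambda n/k$. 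The hypothesis $k\ge 9d\lambda$ gives $\lambda n/k\le n/(9d)$, and therefore any two subsets of $V$ of sizes exceeding $n/(9d)$ span at least one edge. Consequently, any disconnecting cut of $G[V_i]$ or $G[V_i,V_j]$ must isolate a component of size at most $n/(9d)$.

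To rule out these small disconnected components I would use the second hypothesis $k\ge Cd\log d$ via degree concentration. By Chernoff-type tail bounds for the hypergeometric distribution, for each vertex $v$ and each part $V_j$, the degree $\deg_{V_j}(v)$ concentrates around $k/d$ with failure probability at most $\exp(-\Omega(k/d))\le d^{-\Omega(C)}$; this excludes isolated vertices outright. It combines with the expander mixing lemma to handle slightly larger small components as well: a connected set $U$ of size $2\le s\le n/(9d)$ that is disconnected from the rest of $G[V_i,V_j]$ forces essentially all $V_j$-edges incident to $U$ to fall inside a tiny target, a configuration whose probability decays geometrically in the excess over $k/d$. A first-moment enumeration over small $U$ should close out the argument and verify $G[V_i,V_j]$ is connected for every pair $i\le j$.

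The main obstacle will be executing the union bound at the correct threshold: a naive sum over $n$ vertices and $d$ parts would demand $k/d\gtrsim\log n$, which is stronger than the stated $C\log d$. I would aim to bypass this by enumerating not over individual vertices but over minimal ``bad cuts'' of sizes $1\le s\le n/(9d)$, whose count is governed by the pseudorandom structure---via the expander mixing lemma applied to $e(U,V\setminus U)$---rather than by the sheer number of subsets of $V$, so that the first-moment bound closes at the $\log d$ threshold. A robust fallback, should the direct enumeration prove unwieldy, is a delete-and-repair step: identify the (few in expectation) vertices violating degree concentration for some part, remove them from the random partition, and greedily reassign each to a part where it has many neighbours; the macro connectivity established via the expander mixing lemma is preserved under $O(1)$-local swaps, so the modified partition remains strong $d$-rigid. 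Closing this enumeration/repair step is the most delicate part of the argument.
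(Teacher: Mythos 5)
Your high-level decomposition of the problem is exactly right, and your intermediate assertions are correct: the expander mixing lemma forces any disconnecting cut of $G[V_i,V_j]$ to isolate a set of size $O(\lambda n/k)=O(n/d)$, and the hypothesis $k\ge Cd\log d$ must be used to kill such small pieces through degree concentration. You have also correctly identified the central obstacle --- a naive union bound over $n$ vertices needs $k/d\gtrsim\log n$, not $\log d$. The gap is that neither of your two proposed workarounds is likely to close.

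The paper's actual fix is the \emph{Lov\'asz Local Lemma}, and this is the missing idea. If the random partition is formed by assigning each vertex to one of the $d$ parts \emph{independently} (not via a random equipartition --- independence matters here), then the ``bad event'' that $v$ has too few neighbours in some $V_i$ depends only on the assignment of $N(v)$, so two bad events $\cA_u,\cA_v$ are independent unless $N(u)\cap N(v)\neq\es$. The dependency degree is therefore at most $\Delta^2-1\le (\Gamma\delta)^2$, and the LLL condition $e\cdot\pr(\cA_v)\cdot\Delta^2\le 1$ closes at the $\log d$ threshold precisely because $\Delta$, unlike $n$, is controlled by $\delta\approx d\log d$. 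This is the paper's \cref{lem:partition}; the proposition you would want is then \cref{prop:sparseconnector:is:rigid}, which combines the degree guarantee from the LLL partition with a clean local-sparsity/connector argument (jumbledness $\Rightarrow$ local sparsity $\Rightarrow$ expansion; expansion + connector $\Rightarrow$ connected, \cref{lemma:sparse_implies_expander,lemma:expander+connector,lemma:sparse+connector+minimum_degree}).

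Concretely on your two fallbacks: enumerating ``minimal bad cuts'' is vague as stated --- you would still need to bound how many vertex sets $U$ of size $s\le n/(9d)$ are candidates, and the expander mixing lemma bounds edge counts, not the number of such sets, so this does not obviously beat $\binom{n}{s}$. Delete-and-repair is also shaky: with $k=Cd\log d$ and $d$ fixed (say $d=2$), the per-vertex failure probability $d^{-\Omega(C)}$ is a constant, so the expected number of bad vertices is $\Omega(n)$, and the ``greedy reassignment'' would have to fix a linear fraction of vertices while preserving the degree guarantees elsewhere --- far from an $O(1)$-local repair. Replacing the union bound with the LLL (and the equipartition with i.i.d.\ assignments) is the clean way to land at the $\log d$ threshold.
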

We remark that here and later we prefer
clarity of presentation over optimisation of the constants.
\cref{thm:nkl} will follow immediately
from a more general statement (\cref{prop:jumbled:is:rigid})
about ``jumbled'' graphs; see \cref{sec:jumbled}.
Note that in light of Vill\'anyi's recent findings \cite{villanyi2023+every},
one can establish
a stronger rigidity bound for graphs with mild expansion.
Given that every $k$-regular graph is $(k-\lambda_2(G))$-connected (see \cite{Fie73}),
it follows that such a graph is $(1-o(1))\sqrt{k-\lambda_2(G)}$-rigid.
See also \cites{CDG21,FHL23} for some related results in the $d=2$ case.

Since random regular graphs are, \whp{}{}\footnote{%
  With high probability, i.e., with probability tending to $1$ as $n$ tends to $\infty$.}
$(n,k,\lambda)$-graphs for $\lambda=\Theta(\sqrt{k})$ (see, e.g.,~\cites{FKS89,Fri08,Sar23}),
it follows immediately from \cref{thm:nkl} that a random $\Omega(d^2)$-regular graph is \whp{} $d$-rigid.
Using a more direct argument, we are able to prove the following stronger statement.
Let $\cG_{n,k}$ denote the distribution of random $k$-regular graphs. We denote a graph sampled from this distribution as $G\sim \cG_{n,k}$, or occasionally, slightly abusing notation, simply as $\cG_{n,k}$.
\begin{theorem}[Random regular graphs]\label{thm:gnr}
  There exists $C>0$ such that
  for every fixed $d\geq 2$ and $k\ge C d \log{d}$, $G\sim \cG_{n,k}$ \whp{} admits a strong $d$-rigid partition.
\end{theorem}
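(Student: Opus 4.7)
By \cref{cor:strong:rigid:partition}, it suffices to show that $G \sim \cG_{n,k}$ whp admits a (type I) strong $d$-rigid partition, that is, a partition $V = V_1 \sqcup \cdots \sqcup V_d$ for which $G[V_i, V_j]$ is connected for every $1 \le i \le j \le d$. I would work in the configuration model of $\cG_{n,k}$; for fixed $k$ the probability of simplicity is $\Theta(1)$, so whp statements transfer. A first observation is that any strong $d$-rigid partition is in particular a $d$-\emph{domatic} partition: connectivity of $G[V_i,V_j]$ forces every $v\in V_i$ to have a neighbour in $V_j$, so each $V_j$ must dominate $V\setminus V_j$.

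The plan has two stages. In Stage 1, I would produce a $d$-domatic nearly-balanced partition via the Lov\'asz Local Lemma applied to an independent uniformly random $d$-colouring of $V$. A uniformly random balanced partition alone is insufficient in our regime, since the expected number of (vertex, part) pairs for which the vertex misses a neighbour in the part is of order $nd^{1-C}$, linear in $n$. For the LLL, each bad event ``$v$ is not coloured $j$ and has no neighbour coloured $j$'' has probability at most $(1-1/d)^{k+1}\le d^{-\Omega(C)}$ and depends only on the colours of $v$ and its $k$ neighbours, giving dependency degree $O(dk) = O(Cd^2\log d)$. For $C$ a sufficiently large absolute constant, the LLL condition $ep(D+1)\le 1$ closes, producing a $d$-domatic colouring (which can be adjusted to exact balance with negligible effect).

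Stage 2 upgrades ``$d$-domatic'' to ``strong $d$-rigid'' by ruling out, for each pair $(i,j)$, any non-trivial cut $S\sqcup T$ of $V_i\cup V_j$ with no $V_i$-$V_j$ edge crossing. For a cut of size $s$, the configuration-model probability that no crossing edge is sampled is of order $d^{-\Omega(Cs)}$; combined with the $\binom{n}{s}$ enumeration, a first-moment bound handles cuts above a certain size threshold, while a more delicate LLL-style argument controls the smaller ones. A final union bound over the $\binom{d+1}{2}=O(1)$ pair indices then finishes the proof.

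The principal obstacle lies in cut sizes of intermediate scale, where neither a direct first-moment bound nor a direct LLL application is immediately sharp. Overcoming this requires exploiting the sharp concentration of edge counts between fixed vertex sets in the random pairing of $\cG_{n,k}$ — which goes beyond the spectral estimate used in the proof of \cref{thm:nkl} — and is precisely what brings the threshold down from $k=\Omega(d^2)$ (which one obtains from \cref{thm:nkl} with $\lambda=\Theta(\sqrt k)$) to $k=\Omega(d\log d)$.
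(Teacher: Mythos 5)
There is a genuine gap. Your Stage 1 is too weak, and your Stage 2 does not close (and cannot close as set up).

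In Stage 1 you only ask the LLL to produce a \emph{domatic} partition: every vertex has $\geq 1$ neighbour in each colour class. That is a necessary but far from sufficient condition for the $G[V_i,V_j]$ to be connected. Consider a pair $u\in V_i$, $v\in V_j$ that are adjacent, with $v$ being $u$'s unique neighbour in $V_j$ and $u$ being $v$'s unique neighbour in $V_i$: then $\{u,v\}$ is an isolated component of $G[V_i,V_j]$, even though the partition is domatic. In the configuration model with a (near-)balanced colouring, the expected number of such isolated edges across all pairs $(i,j)$ is of order $nk\,e^{-2k/d}\approx nk\,d^{-2C}$, which for fixed $d$ and constant $C$ is $\Theta(n)\to\infty$. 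So a domatic partition will typically fail to have all the pair subgraphs connected, and Stage 2 is fighting a lost battle from the start. The fix is precisely what the paper's partition lemma (\cref{lem:partition}) delivers: the LLL is applied to the much stronger bad events ``$v$ has fewer than $(1-\alpha)k/d$ neighbours in $V_j$'', which for $k\geq Cd\log d$ still has probability $\leq d^{-\Omega(C)}$, so it costs nothing, and it rules out small-cut pathologies outright.

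Stage 2 is also structurally problematic. The colouring produced in Stage 1 is a (deterministic, LLL-guaranteed) function of the realised graph, so once you condition on it you can no longer treat the configuration-model pairing as fresh randomness against which to union-bound cuts; the ``configuration-model probability that no crossing edge is sampled'' is not available to you. You flag the intermediate cut scales as the ``principal obstacle'' and gesture at ``an LLL-style argument'' and ``sharp concentration of edge counts'', but no actual argument is given, and the isolated-edge calculation above shows the small-cut case genuinely fails with your Stage 1. The paper sidesteps all of this by making the connectivity of each $G[V_i,V_j]$ a \emph{deterministic} consequence of three whp properties of $G$ alone: it is locally sparse ($(x,y)$-sparse, from the spread property of $\cfgm_{n,k}$), it is an $O(\log k\cdot n/k)$-connector, and it has near-regular degrees. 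Given those, any colouring with $\Omega(k/d)$ neighbours per class automatically makes each $G[V_i,V_j]$ an expander and hence connected (\cref{lemma:sparse_implies_expander}, \cref{lemma:expander+connector}, \cref{prop:sparseconnector:is:rigid}). That separation of ``random graph properties'' from ``deterministic partition consequences'' is the missing idea. (A minor further slip: the dependency degree for your domination events is $O(dk^2)$, since two events interact whenever the closed neighbourhoods overlap, not $O(dk)$; this does not break the LLL but should be stated correctly.)
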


Previously, the sole result regarding the rigidity of random regular graphs 
came from~\cite{JSS07}, demonstrating that \( \cG_{n,k} \) is \whp{} \( 2 \)-rigid 
when \( k\ge 4 \).
It is plausible, however, that \( \cG_{n,2d} \) is
\whp{} \( d \)-rigid for every \( d\ge 2 \).
Our result can be viewed as a first step in that direction.

Consider the \defn{binomial random graph} $G(n,p)$
which is the distribution over graphs on the vertex set $[n]$
in which each pair of vertices is connected independently with probability $p$.
It is well known~\cite{BT87} that every (nontrivial) monotone graph property has a threshold
in $G(n,p)$.
Since rigidity is evidently a monotone property, it is natural to try and identify its threshold.
The initial efforts in this direction were made by Jackson, Servatius, and Servatius~\cite{JSS07}
who proved that $p\ge(\log{n}+\log\log{n}+\omega(1))/n$ suffices to guarantee $2$-rigidity \whp{}.
As for $d>2$, further advancements were achieved by Kir\'aly and Theran~\cite{KT13+},
as well as by Jordán and Tanigawa~\cite{JT22}.
Eventually, Lew, Nevo, Peled, and Raz~\cite{LNPR23} established that
$p_c(d)=(\log{n}+(d-1)\log\log{n})/n$ is the threshold for
$d$-rigidity for any fixed value of $d$. 
In fact, for fixed $d$, they also proved a more robust {\em hitting-time result},
which we discuss next.

The \defn{random graph process}
on the vertex set $[n]$
is a stochastic process that starts with the vertex set $[n]$ and no edges,
and at each step, a new edge is added by selecting uniformly from the set of missing edges.
We denote by $G(n,m)$ the random graph process on $n$ vertices at time $m$;
note that this is a uniformly chosen random graph on $n$ vertices with $m$ edges.
The \defn{hitting time} of a monotone property refers to the first time at which
the random graph process possesses that particular property.
An advantage of studying random graph processes is that
it allows for a higher resolution analysis of the typical emergence of (monotone) graph properties.
The main result of~\cite{LNPR23} is that for every fixed $d\ge 1$,
the hitting time for minimum degree $d$
coincides, \whp{},
with the hitting time for $d$-rigidity\footnote{In fact, the results from~\cite{LNPR23} hold also for more general notions of rigidity,
defined in terms of ``abstract rigidity matroids'' (see e.g.~\cite{Gra91}).}.
Here, we extend this result for sufficiently slowly growing $d$,
and strengthen it by showing that the hitting time for minimum degree $d$
coincides, \whp{}, with the hitting time for the admittance of strong $d$-rigid partitions.
Let $\tau_d$ denote the hitting time for minimum degree $d$
in a random graph process.

\begin{theorem}[Hitting time for $d$-rigidity]\label{thm:gnt}
  There exists $c>0$ for which the following holds.
  For every $1\le d=d(n)\le c\log{n}/\log\log{n}$,
  a random graph
  $G\sim G(n,\tau_d)$ \whp{} admits a strong $d$-rigid partition.
\end{theorem}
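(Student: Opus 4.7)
\emph{The plan is} to exhibit, \whp{}, a type~I strong $d$-rigid partition $V_1,\ldots,V_d$ of $G=G(n,\tau_d)$; by \cref{cor:strong:rigid:partition} this in particular recovers the rigidity hitting-time result of \cite{LNPR23}. Writing $p^\ast=(\log n+(d-1)\log\log n)/n$, the edge density of $G$ is $p^\ast(1+o(1))$. I would begin by collecting the standard structural picture near $\tau_d$: \whp{} (i) $\Delta(G)=O(\log n)$; (ii) the set $\Small$ of vertices of degree at most $\log n/(8d)$ has $|\Small|\le n^{o(1)}$ and is ``$r$-scattered'' (pairwise graph distances at least $r$) for any constant $r$; and (iii) every vertex outside the $r$-neighbourhood of $\Small$ has degree $\Omega(\log n)$. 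These facts are routine first/second-moment computations in $G(n,p^\ast)$, together with the usual reduction from the process at time $\tau_d$ to $G(n,p^\ast)$.

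\emph{Construction of the partition.} I would build $V_1,\ldots,V_d$ in two stages. In stage~one, iterate over $v\in\Small$: by $r$-scatteredness, distinct small vertices have pairwise disjoint neighbourhoods, so I can colour $N(v)$ surjectively onto $[d]$ (which is possible since $\deg_G(v)\ge d$ by definition of $\tau_d$), and then colour $v$ itself with the colour of one of its neighbours. In particular, every $v\in\Small$ has at least one neighbour in every part, including its own colour class. In stage~two, assign a uniformly random colour in $[d]$ to every remaining vertex, independently. The degree-$d$ vertices are the delicate points (they have zero slack for how their neighbours are coloured), and pre-colouring their neighbourhoods by hand in stage~one handles them.

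\emph{Verification.} One must show $G[V_i,V_j]$ is connected for all $1\le i\le j\le d$. Small vertices are non-isolated in every $G[V_i,V_j]$ by construction; for non-small vertices, a Chernoff bound over the random colouring gives $\Omega(\log n/d)$ neighbours in each colour class, \whp{}. To promote these minimum-degree bounds to connectivity, I would use a two-round exposure of $G$: reveal edges of density $p_1$ slightly below $p^\ast$ first to set up the partition and to provide expansion in the core, then use the remaining ``sprinkler'' edges to knit components together. Concretely, I would rule out a bad subset $S\subseteq V_i\cup V_j$ with $2\le|S|\le|V_i\cup V_j|/2$ having no $G[V_i,V_j]$-edge leaving $S$, by splitting into small $|S|$ (union-bound over at most $\binom{n}{|S|}$ choices, using independence of the random colouring and standard edge-count tail bounds for $G(n,p^\ast)$) and large $|S|$ (where the core expansion forces many cross-edges, of which a constant fraction survive any balanced random colouring). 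A final union bound over the $\binom{d+1}{2}=O(d^2)=n^{o(1)}$ pairs $(i,j)$ swallows the loss.

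\emph{Main obstacle.} The genuine technical hurdle is that $G(n,\tau_d)$ sits exactly at the connectivity threshold: after a uniformly random $d$-colouring, the typical $G[V_i,V_j]$ is only just above bipartite connectivity, so the proof cannot afford lossy bounds and must use the explicit expansion of $G(n,p^\ast)$ rather than a naive density estimate. One must also handle the coupling between \emph{which} vertices are small and \emph{which} colours they receive. The restriction $d\le c\log n/\log\log n$ enters precisely here: it ensures $\log n/d\to\infty$, which is what makes the per-colour Chernoff estimate strong enough to drive the expansion argument uniformly over all $O(d^2)$ pairs.
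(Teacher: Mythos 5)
Your high-level strategy --- isolate the small-degree vertices, pre-colour their neighbourhoods by hand using the scattering property, randomly colour the bulk, and then prove each $G[V_i,V_j]$ is connected --- is exactly the paper's strategy. But two steps as written would fail, and the third is a genuinely different (and heavier) verification route than the paper uses.

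First, the threshold $\log n/(8d)$ for membership in $\Small$ is too permissive. A vertex $v$ of degree, say, $\log n/(4d)$ is non-small in your setup and therefore enters stage two, but it cannot possibly have $\Omega(\log n/d)$ neighbours in each of $d$ colour classes, since $d\cdot\Omega(\log n/d)\gg\deg(v)$ for $d\ge 2$. The paper instead sets $\Small=S_\eta$ with a \emph{constant} $\eta$ (Claim~\ref{cl:small}), so that every non-small vertex has degree $\Omega(\log n)$ and can in principle distribute $\Omega(\log n/d)$ neighbours to each class. Second, and more seriously, the claim that ``a Chernoff bound over the random colouring gives $\Omega(\log n/d)$ neighbours in each colour class, \whp{}'' does not survive the union bound when $d\to\infty$. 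The per-vertex-per-colour failure probability is $e^{-\Theta(\log n/d)}$; multiplying by $n d$ gives a quantity of order $n d\cdot n^{-\Theta(1/d)}$, which is not $o(1)$ once $d\to\infty$. Your final paragraph asserts that $d\le c\log n/\log\log n$ ``makes the per-colour Chernoff estimate strong enough,'' but it does not; the naive union bound only closes for $d=O(1)$. The paper gets around this precisely by replacing the union bound over all $n$ vertices with the Lov\'asz Local Lemma in the partition lemma (\Cref{lem:partition}), so the relevant dependency degree is $\Delta^2=\Theta(\log^2 n)$ rather than $n$, and then $d\le c\log n/\log\log n$ suffices. Without the Local Lemma (or an equivalent dependency-graph argument) this step has a real gap.

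Finally, your verification step --- a two-round exposure with sprinkling to knit components together --- is a different route from the paper's. The paper sandwiches $G(n,\tau_d)$ between $G^-\sim G(n,\log n/n)$ and $G^+\sim G(n,2\log n/n)$, uses $G^-$ for the $(2x/3)$-connector property (\Cref{lem:gnp:connect}) and $G^+$ for $(x,y)$-sparseness (\Cref{lem:gnp:sparse}), and then applies \Cref{lemma:sparse+connector+minimum_degree} once to each pair $(i,j)$: no second exposure round is needed, because the sparse+connector machinery already converts a minimum-degree condition on $G[V_i,V_j]$ into connectivity. Your sprinkling approach might be made to work, but it has the extra complication that the partition is determined by the first-round edges (so the second-round edges must be argued to be independent of the partition), and it must be carried out uniformly over all $\binom{d+1}{2}$ pairs at the hitting time --- exactly the regime you identify as delicate. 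The sparse/connector framework is precisely what makes the argument clean at the threshold and should replace the sprinkling step.
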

The above theorem is sharp in the sense that $d$-rigidity implies $d$-connectivity,
and hence also minimum degree $d$.
For $d\gg \log{n}/\log\log{n}$, however, the threshold for $d$-rigidity remains unknown.
In this work, we establish that $G(n,p)$ is \whp{} $d$-rigid for $d=\Omega(np/\log(np))$,
provided that $p\ge (1+\eps)\log{n}/n$,
which is best possible up to logarithmic factors.
Moreover, we show the existence of a strong $d$-rigid partition,
an interesting result in its own right.

\begin{theorem}[Binomial random graphs]\label{thm:gnp}
  For every $\eps>0$ there exists $c>0$ such that if $p\ge (1+\eps)\log{n}/n$, then  $G\sim G(n,p)$ \whp{} admits a strong $d$-rigid partition for $d=cnp/\log(np)$.
\end{theorem}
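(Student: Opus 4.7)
By \cref{cor:strong:rigid:partition}, it suffices to produce, \whp{}, a partition \(V = V_1 \cup \cdots \cup V_d\) into \(d = cnp/\log(np)\) parts such that \(G[V_i, V_j]\) is connected for every \(1 \le i \le j \le d\). Since this property is monotone in edges, I would fix \(c = c(\eps)\) small and split the analysis according to the magnitude of \(np\).

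When \(np \le K\log n\) for a constant \(K = K(\eps)\), one has \(d \le cK\log n/\log\log n\); choosing \(c\) small relative to \(K\) and to the constant in \cref{thm:gnt} places \(d\) into the range handled by that hitting-time theorem. The value of \(p\) at the hitting time \(\tau_d\) is \((1+o(1))\log n/n\) for such \(d\), hence below our \(p \ge (1+\eps)\log n/n\); by monotonicity of the rigid partition property, we obtain a strong \(d\)-rigid partition \whp{}.

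When \(np\) is substantially larger --- essentially \(np \ge n^{\Theta(c)}\) --- a uniform random equipartition of \(V\) into \(d\) parts of size \(m = n/d = \log(np)/(cp)\) works directly. By Chernoff on the vertex degrees of \(G\), \whp{} \(\deg(v) = (1+o(1))np\) for every \(v\), and conditionally on \(G\) the number of neighbours of \(v\) in each part is hypergeometric with mean \(\Theta(\log(np)/c)\). Once \(c\) is small enough that \(mp\) exceeds \((1+o(1))\log m\), standard concentration together with a union bound over the \(\binom{d}{2}\) pairs shows that every \(G[V_i, V_j]\) is connected \whp{}.

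The principal obstacle is the \emph{intermediate regime} \(K\log n \le np \le n^{\Theta(c)}\), where neither argument directly applies: \cref{thm:gnt} caps \(d\) at \(O(\log n/\log\log n)\), while the random equipartition fails since \(mp \sim \log(np)/c \ll \log m \sim \log n\), forcing \(G[V_i, V_j]\) to have isolated vertices \whp{}. To bridge this regime I would employ a \emph{trimming and repair} step: start from a random equipartition, identify the (expectedly \(o(n)\) many) vertices isolated in some bipartite subgraph, and reassign them to appropriate parts using the edge abundance guaranteed by \(np \ge K\log n\); the sparse structure of the bad incidences --- their expected total count is \(nd\,(np)^{-1/c}\) --- should make such a reassignment feasible. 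Carrying out these repairs simultaneously for all offending vertices while preserving every bipartite connectivity, and while maintaining exactly \(d\) parts, is the delicate technical step and would be the heart of the argument.
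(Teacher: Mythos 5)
Your high-level plan is correct in spirit (build a partition with all $G[V_i,V_j]$ connected, then invoke \cref{cor:strong:rigid:partition}), and you have correctly identified exactly where a direct random-partition argument breaks down. But the gap you leave open in the intermediate regime $K\log n \le np \le n^{\Theta(c)}$ is the whole content of the theorem, and the sketched ``trimming and repair'' step does not yet amount to a proof. Repairing isolated vertices is not the same as repairing connectivity: after removing the isolated vertices you would still have to rule out small components in each of the $\binom{d}{2}$ bipartite subgraphs, and every reassignment of a vertex from $V_i$ to $V_j$ simultaneously alters $\Theta(d)$ bipartite subgraphs, so it is not clear a consistent repair even exists. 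Moreover, the regime split at $np\sim n^{\Theta(c)}$ is itself uncomfortable, since $c$ is what you are trying to choose.

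The paper's proof avoids the regime split entirely, and the mechanism is worth internalising. Instead of showing that a uniformly random partition makes each $G[V_i,V_j]$ connected \emph{directly} (which, as you observe, would require $mp\gg\log m$), it decouples the argument into a randomised step and a deterministic step. The randomised step is the partition lemma (\cref{lem:partition}), proved via the Lov\'asz Local Lemma: under the hypotheses $\delta\geq Cd\log d$ and $\Delta\le\Gamma\delta$, there exists a partition in which \emph{every} vertex has at least $(1-\alpha)\delta/d$ neighbours in \emph{every} part. The LLL replaces the union bound over $n$ vertices (which costs $\log n$ and causes your intermediate-regime failure) by a bound depending only on the local dependency degree $\Delta^2$; this is why the requirement is $\delta\gtrsim d\log d$ rather than $\delta\gtrsim d\log n$, and why no regime split is needed. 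The deterministic step then uses global structural properties of $G$ that hold \whp{}: $G$ is $(x,y)$-sparse and a $(2x/3)$-connector for $x=3\log(np)/p$, $y=5\log(np)$ (\cref{lem:gnp:sparse,lem:gnp:connect}). \Cref{lemma:sparse+connector+minimum_degree} shows that in any such graph, an induced bipartite subgraph of minimum degree $\geq 6y$ is automatically connected (it is an expander, and a bi-connector, hence connected by \cref{lemma:expander+connector}). Since the LLL partition gives minimum degree $\geq 6\delta/(7d)\geq 6y$ in each $G[V_i,V_j]$ once $c$ is chosen small enough, every bipartite piece is connected, with no probability left to pay. This is \cref{prop:sparseconnector:is:rigid}, and the proof of \cref{thm:gnp} is just a matter of checking the parameters. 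So the key ideas you are missing are: (i) use the LLL to get a minimum-degree guarantee in every part rather than a union-bound connectivity guarantee, and (ii) prove once and for all that in a sparse connector, high minimum degree forces connectivity of induced bipartite subgraphs.
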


We emphasize that the above statement is sharp, up to constants,
in the following sense:
a graph that admits a strong $d$-rigid partition has, in particular, a dominating set of size $n/d$.
The smallest dominating set in $G(n,p)$ is \whp{} of size $\Omega(\log(np)/p)$
(see, e.g.,~\cite{GLS15}),
implying that the largest $d$ for which $G(n,p)$ admits a strong $d$-rigid partition
is \whp{} $O(np/\log(np))$.
Nevertheless, we conjecture that when $f\to\infty$,
the ``bottleneck'' for the rigidity of $G(n,p)$ lies in its number of edges: note that a $d$-rigid $n$-vertex graph must have at least $dn-\binom{d+1}{2}$ edges (otherwise, the rigidity matrix associated to any embedding of $G$ into $\RR^d$ has rank smaller than $dn-\binom{d+1}{2}$, so $G$ is not $d$-rigid). Since the expected number of edges in $G(n,p)$ is $p\binom{n}{2}$, if $G(n,p)$ is $d$-rigid \whp{} then it must satisfy the inequality $p \binom{n}{2} \geq dn-d(d+1)/2$, which holds for $d\leq (1-o(1))(1-\sqrt{1-p})n$. Accordingly, we conjecture:

\begin{conjecture}\label{conj:gnp}
  For every $p=\omega(\log{n}/n)$,
  $G\sim G(n,p)$ is \whp{} $d$-rigid for $d=(1-o(1))(1-\sqrt{1-p})n$.
\end{conjecture}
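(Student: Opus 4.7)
The plan is to construct a general $d$-rigid partition of $G\sim G(n,p)$ and appeal to \cref{thm:rigid:partition}. At the conjectured $d=(1-o(1))(1-\sqrt{1-p})n$, the edge-count lower bound $|E(G)|\geq dn-\binom{d+1}{2}$ is essentially tight, so the edges of $G$ must be almost independent in the $d$-rigidity matroid. The strong rigid partitions used in \cref{thm:gnp} cannot reach this range, since they force a dominating set of size $\leq n/d$, capping $d$ at $O(np/\log(np))$; hence it is crucial to exploit the freedom in \cref{thm:rigid:partition} to colour intra-part edges and relax the requirement of full bipartite connectivity between parts.

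First, I would partition $V$ uniformly at random into $d+1$ parts $V_1,\dots,V_{d+1}$ of nearly equal size $s\approx n/(d+1)\approx 1/(1-\sqrt{1-p})$, which is $O(1/p)$ throughout the conjectured range, giving $ps=O(1)$. Consequently each bipartite $G[V_i,V_j]$ is typically \emph{not} connected on its own, so missing bipartite connectivity of each $G_{ij}$ must be compensated by intra-part edges from $V_i$ or $V_j$. A natural device is to fix, for each $i$, a spanning forest $T_i$ of $G[V_i]$ and assign all of its edges to a single class $E_{i,\sigma(i)}$, where $\sigma(i)\neq i$ is chosen so as to repair the bipartite pair that most lacks connectivity. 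Then the coloured edges lying inside $V_i$ are globally monochromatic in the pair index, so the monochromatic cut condition for every $U\subseteq V_i$ with $|U|\geq 2$ is witnessed by any bipartition of $U$ that separates components of $T_i\restriction U$.

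The remaining $G_{ij}$'s (with $j\notin\{\sigma(i),\sigma^{-1}(i)\}$) would then have to be connected using bipartite edges alone, a sub-problem amounting to bipartite connectivity of most of the $\binom{d+1}{2}$ induced bipartite pairs. This I would attack via a careful choice of $\sigma$ together with a refined second-moment analysis over the uniform random partition, leveraging that most vertices of $G(n,p)$ have typical degree and are thus unlikely to be simultaneously badly-placed in many pairs. If this succeeds in a sharper regime (say, for $p$ above some polylogarithmic threshold), it would already improve \cref{thm:gnp} by a superlogarithmic factor.

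The main obstacle is that, when $ps=O(1)$, a positive fraction of vertices are isolated in any bipartite $G[V_i,V_j]$, so no single-colour bridging forest can repair enough of the $\binom{d+1}{2}$ pairs simultaneously: each $T_i$ helps at most two partner pairs touching $i$, while the remaining $d-2$ pairs remain reliant on bare bipartite connectivity, which fails. Closing this gap likely requires abandoning the single-colour-per-part trick, perhaps by layering several internally-labelled forests per $V_i$ whose cuts align combinatorially, or leaving \cref{thm:rigid:partition} altogether in favour of a direct rank-concentration analysis of the random rigidity matrix, a matroid-union argument that combines the graphic and $d$-rigidity matroids, or a recursive gluing construction that decomposes $G(n,p)$ into overlapping rigid blocks along the lines of the Asimow--Roth machinery.
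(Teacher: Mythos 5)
This statement is labeled a \emph{conjecture} in the paper; the authors do not prove it, and there is nothing in the text to compare your argument against. Your proposal does not prove it either---it is an exploratory sketch that identifies the key quantitative obstacle and then, by your own admission, stops short. So the right way to read this is not as a proof with a gap but as a correct diagnosis of why the paper's own machinery cannot settle \cref{conj:gnp}.

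Your numerical observations are sound. With $d+1$ parts of size $s=n/(d+1)$ and $d=(1-o(1))(1-\sqrt{1-p})n$, one gets $ps = 1+\sqrt{1-p}\in[1,2]$, so each bipartite pair $G[V_i,V_j]$ has average degree $O(1)$ and therefore $\Theta(s)$ isolated vertices; patching all $\binom{d+1}{2}$ pairs would require $\Theta(n^2/s)$ extra edges, but only $O(n)$ intra-part edges are available. Your algebra is also consistent with the tightness of the edge count, since $dn-\binom{d+1}{2}\approx pn^2/2\approx|E|$. This is exactly the phenomenon the paper already records in \cref{sec:remarks}: for constant $p$ (and hence a fortiori in the conjectured range when $p$ is bounded away from $0$), there do not even exist $d$ pairwise edge-connected disjoint vertex sets once $d\ge Cn/\sqrt{\log n}$, so \emph{no} $d$-rigid partition exists, and \cref{thm:rigid:partition} cannot reach $d=\Theta(n)$. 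Your ``single-colour-per-part'' trick is a reasonable thing to try, but as you note it only helps $O(1)$ of the $d$ pairs touching each $V_i$; no amount of clever colouring changes the global edge budget.

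The concrete takeaway: any proof of \cref{conj:gnp} must leave the rigid-partition framework entirely. Your closing suggestions (direct rank concentration for the random rigidity matrix, matroid-union arguments, recursive gluing) are all plausible directions, but none is developed here, and each faces the difficulty that the edge count is right at the matroid-rank bound, so there is essentially no slack: the argument would have to show that $G(n,p)$ has almost no circuits in the generic $d$-rigidity matroid. Be careful not to present the sketch above as a partial proof---it is an argument for why the easy route is closed, which is useful, but the conjecture remains fully open.
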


Next, we establish the following equivalent of \cref{thm:gnp} for random bipartite graphs.
Let $G(n,n,p)$ be the \defn{binomial bipartite random graph}, which is the distribution over bipartite graphs with two parts $A$ and $B$, each of size $n$, 
in which each vertex in $A$ is adjacent to each vertex in $B$ independently with probability $p=p(n)$.

\begin{theorem}[Binomial random bipartite graphs]\label{thm:gnnp}
  For every $\eps>0$ there exists $c>0$ such that if $p\ge (1+\eps)\log{n}/n$, then $G\sim G(n,n,p)$ \whp{} admits a strong bipartite $d$-rigid partition for
  $d=\min\{cnp/\log(np),0.8\sqrt{n}\}$.
  
\end{theorem}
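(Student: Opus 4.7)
By \cref{cor:strong:bipartite:rigid:partition}, it suffices to construct, \whp{}, a strong bipartite $d$-rigid partition of $V = A \cup B$. The plan is to assign each vertex of $A$ and each vertex of $B$ independently and uniformly at random to one of $d+1$ classes, producing parts $V_i = A_i \cup B_i$ with $|A_i|, |B_i| = (1+o(1))n/(d+1)$ by Chernoff. Taking the witness sequence $s_i = i-1$, which satisfies $\sum_{i=1}^k s_i = \binom{k}{2}$, the task reduces to verifying, \whp{}: (i) that the cross-bipartite subgraph $G[A_i, B_j]$ is connected for every $i \neq j$; and (ii) that the diagonal subgraph $G[A_i, B_i]$ contains a forest with $s_i = i-1$ edges for every $i$.

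The diagonal forest condition (ii) follows from a Chernoff estimate: the edge count in each $G[A_i, B_i]$ has expectation of order $p(n/(d+1))^2$, and the constraints $d \leq cnp/\log(np)$ and $d \leq 0.8\sqrt n$ together ensure that this quantity exceeds $d \geq s_i$ by a wide margin. The cap $d \leq 0.8\sqrt n$ further guarantees $|A_i|+|B_i| \gg d$, leaving room for a bipartite forest of size up to $d$, which is extracted greedily from the available edges; a union bound over $i \in [d+1]$ finishes this step.

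The cross-bipartite connectivity (i) is the main obstacle. Since the partition is independent of the edges, each $G[A_i, B_j]$ is distributed as the binomial bipartite random graph $G(|A_i|, |B_j|, p)$. When $p$ is well above the bipartite connectivity threshold for parts of size $n/(d+1)$---in particular when the cap $d = 0.8\sqrt n$ binds, so that the parts are of size $\Theta(\sqrt n)$---standard connectivity thresholds for bipartite random graphs combined with a union bound over the $\binom{d+1}{2}$ pairs yield the claim. Near the connectivity threshold $p \approx \log n/n$, however, a uniform random partition fails: each $G[A_i, B_j]$ typically contains many isolated vertices. To handle this regime, the plan is to employ a sprinkling argument: split $p = p_1 + p_2$ with $p_1, p_2 \geq (1+\eps/3)\log n/n$, realize $G$ as $G_1 \cup G_2$ with independent $G_1 \sim G(n,n,p_1)$ and $G_2 \sim G(n,n,p_2)$, construct the partition using only $G_1$ (possibly augmented by graph-aware corrections exploiting substructures such as spanning Hamilton cycles or near-perfect matchings of $G_1$), and rely on the independent edges of $G_2$ to repair any cross-bipartite subgraph that remains disconnected after the first round.
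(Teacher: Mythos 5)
Your plan correctly identifies the main obstacle: for a uniform random partition into $d+1$ parts with $d\asymp np/\log(np)$, each cross subgraph $G[A_i,B_j]$ lives on parts of size $\approx n/(d+1)\asymp\log(np)/p$, which for $p$ near $\log n/n$ is far below the bipartite connectivity threshold, so these subgraphs typically have $\Theta(n/\log^2 n)$ isolated vertices each. But the fix you propose --- a sprinkling step $p = p_1 + p_2$ with "graph-aware corrections exploiting substructures such as spanning Hamilton cycles or near-perfect matchings" --- is not worked out and is unlikely to close the gap as sketched. You would need $G_2$ to simultaneously repair $\binom{d+1}{2}$ cross subgraphs, each with a comparable set of low-degree vertices, and every $G_2$-edge can contribute to at most one pair $(i,j)$; the parenthetical suggestion that spanning structures of $G_1$ help is vague and does not indicate how the partition should depend on $G_1$ or how the count of "bad" vertices would shrink. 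In other words, the connectivity step is left at the level of a heuristic.

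The paper avoids this entirely by not treating each $G[A_i,B_j]$ as a standalone random bipartite graph. Instead it uses the balanced partition lemma (\cref{lem:partition:balanced}, proved via the Lov\'asz Local Lemma), which produces a partition in which \emph{every} vertex has $\ge(1-\alpha)\delta/(d+1)$ neighbours in each part and all parts have prescribed sizes. Connectivity of each $G[A_i,B_j]$ is then deduced \emph{deterministically}: the graph $G(n,n,p)$ is \whp{} a sparse bi-connector (\cref{lem:gnnp:sparse,lem:gnnp:connect}), so the minimum-degree guarantee together with sparsity yields expansion (\cref{lemma:sparse_implies_expander}), and expansion plus the bi-connector property yields connectedness (\cref{lemma:expander+connector}); this is packaged in \cref{prop:sparseconnector:is:bipartite:rigid}. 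No individual cross subgraph is required to be above any random-graph connectivity threshold. Similarly, for the diagonal blocks the paper does not rely on raw edge counts (which does not by itself give a forest of the required size --- edges could concentrate on few vertices); it uses the bi-connector property via \cref{lem:paths_DFS} to extract a matching of size $>d$ in $G[A_i,B_i]$, and \cref{lem:matching:bipartite} to conclude. The decisive idea your proposal is missing is the LLL-based degree-preserving partition combined with the deterministic sparse-expander-connector machinery.
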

The upper bound $O(\sqrt{n})$\footnote{%
Note that we have not made any particular effort to improve the value $0.8$ in the theorem.}
on $d$ in \cref{thm:gnnp} should not come as a surprise,
given that the complete bipartite graph $K_{n,n}$ is $d$-rigid
for $d\sim 2\sqrt{n}$; see \cref{prop:complete:bipartite} and the preceding discussion.
Nonetheless, it is a noteworthy observation that relatively sparse binomial random bipartite graphs (for $p=\Omega(\log{n}/\sqrt{n})$)
attain this maximal possible rigidity, at least up to constant factors.
This stands in contrast to what is seen in non-bipartite graphs,
as indicated by \cref{thm:gnp,conj:gnp}.

Since $1$-rigidity is equivalent to connectivity,
the notion of $d$-rigidity naturally arises as a generalisation for connectivity.
A fundamental phenomenon in random graphs
is the ``phase transition'',
where a ``giant'' (linear-sized) connected component emerges.
Extending this concept,
it is natural to investigate the (sudden?) emergence of a giant $d$-rigid component.
A \defn{$d$-rigid component} of a graph $G=(V,E)$
is a maximal $d$-rigid induced subgraph of $G$.
That is, for $U\subseteq V$,
$G[U]$ is a $d$-rigid component of $G$ if it is $d$-rigid
but $G[U']$ is not $d$-rigid for any $U\subsetneq U'$.
It is conjectured by Lew, Nevo, Peled and Raz~\cite{LNPR23}
(and proved for $d=2$ by Kasiviswanathan, Moore and Theran in~\cite{KMT11}),
that the random graph $G(n,p)$
undergoes a phase transition for the containment of a giant $d$-rigid component
exactly at the ``time'' that the average degree of the $(d+1)$-core of the graph exceeds $2d$.
This coincides with the threshold for $d$-orientability of $G(n,p)$ (see \cite{FR07}),
which happens at $p\sim d/n$ (see \cite{DM09}*{Theorem~1}).

In this work, we establish that for sufficiently large $c=c(d)$,
the random graph $G(n,c/n)$ contains, \whp{},
a giant --- and in fact almost spanning --- $d$-rigid component.

\begin{theorem}[Giant rigid component]\label{thm:rigid_component}
  For every $\eps\in(0,1)$ there exists $C>0$
  such that for every fixed $d\ge 2$,
  $G\sim G(n,Cd\log{d}/n)$ contains \whp{} a $d$-rigid component with at least $(1-\eps)n$ vertices.
\end{theorem}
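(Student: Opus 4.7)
The plan is to exhibit, \whp{}, a subset $U\subseteq V(G)$ with $|U|\ge(1-\eps)n$ such that $G[U]$ is $d$-rigid; then $U$ is contained in a $d$-rigid component of size at least $(1-\eps)n$, as required.

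I would use a two-round exposure. Couple $G\sim G(n,Cd\log d/n)$ as the edge-disjoint union $G_1\cup G_2$ of two independent random graphs $G_i\sim G(n,p_i)$ with $p_1,p_2$ both of order $Cd\log d/n$. In the first round, apply a $k$-core-type peeling to $G_1$ alone (with $k$ of order $d\log d$) to extract $U\subseteq V(G)$ with $|U|\ge(1-\eps)n$ \whp{}, on which $G_1[U]$ has minimum degree $\Omega(d\log d)$. This is standard and follows from well-known results on the giant $k$-core in sparse random graphs, valid when $C=C(\eps)$ is sufficiently large. Crucially, $G_2$ is independent of the construction of $U$, so its edges provide fresh randomness for the second round.

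In the second round, use $G_2$ on $U$ to build a $d$-rigid partition (in the sense of \cref{thm:rigid:partition}) of $G[U]$. The most direct attempt is to apply \cref{cor:cdsp:rigid}: assign each vertex of $U$ independently and uniformly to one of $\binom{d+1}{2}$ colours, and argue by Chernoff and union bounds that each colour class is \whp{} both connected and dominating in $G_2[U]\subseteq G[U]$. By \cref{cor:cdsp:rigid}, this would yield the $d$-rigidity of $G[U]$.

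The main obstacle is this last step. In the sparse regime $np=\Theta_d(1)$, a fixed outside vertex fails to be dominated by a given colour class of size $m\sim 2n/(d(d+1))$ with probability $(1-p_2)^m\approx\exp(-\Theta(\log d/d))$, a quantity bounded away from $0$ uniformly in $n$; hence the expected number of undominated vertices per class is linear in $n$, and the naive union bound fails if $C$ is required to be independent of $d$. Two workarounds are available: either take $C$ sufficiently large as a function of $d$ (permissible since $d$ is fixed), or employ the full flexibility of \cref{thm:rigid:partition} with unequal parts---placing the bulk of $U$ in a single large class $V_1$ whose internal coloured edges are all assigned a common colour (making the monochromatic cut condition on $V_1$ trivial), and supplementing with smaller classes $V_2,\dots,V_{d+1}$ whose pairwise connectivity in the partial edge-colouring can be established via $G_2$-edges running into $V_1$.
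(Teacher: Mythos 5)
Your outline gets the first half right---peel to extract a near-spanning induced subgraph $U$ of linear size with minimum degree $\Omega(d\log d)$---and this matches the paper's \cref{lem:giant:balanced}. You also correctly diagnose why the direct CDS route via \cref{cor:cdsp:rigid} fails in the sparse regime: with $\binom{d+1}{2}$ colour classes the per-class degree is only $\Theta(\log d/d)$, so undominated vertices are linear in number. But the two workarounds you sketch to close this gap do not work as stated, and this is where the proof is incomplete.

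Your first workaround (let $C$ depend on $d$) is ruled out by the statement: the quantifier order is $\forall\eps\,\exists C\,\forall d$, so $C$ must be chosen uniformly over $d$. Your second workaround is on the wrong track structurally. In a $d$-rigid partition, $E_{ij}$ may only contain edges with both endpoints in $V_i\cup V_j$, so for $i,j\ge 2$ the graph $G_{ij}$ cannot be connected via edges ``running into $V_1$''; and if all of $E(V_1)\cap\hat E$ is assigned one colour (to trivialize the monochromatic cut condition on $V_1$), then the graphs $G_{1j}$ for the other $j$ contain no internal $V_1$-edges and have a vast majority of $V_1$ isolated, since $V_1$ is almost everything and $V_j$ is small. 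The resolution the paper uses is neither of these: take a \emph{type I strong} $d$-rigid partition with only $d$ parts rather than $\binom{d+1}{2}$, so that the per-part degree is $\Theta(\delta/d)=\Theta(\log d)$, which is exactly enough for the partition lemma (\cref{lem:partition}, via the Lov\'asz Local Lemma) and the sparsity/expansion machinery to kick in. The conclusion then comes from \cref{prop:sparseconnector:is:rigid}, after verifying that $G$---and hence $G[U]$, by heredity---is \whp{} suitably $(x,y)$-sparse and a connector (\cref{lem:gnp:sparse,lem:gnp:connect}).

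Two further points. First, the two-round exposure you propose is unnecessary: $(x,y)$-sparsity and $K$-connectedness are hereditary (pass to induced subgraphs), so they can be established for $G$ once and then transferred to $G[U]$; no fresh randomness is needed, and the partition lemma's randomness is over the colouring, not the graph. Second, your peeling lemma also needs to deliver an upper bound on $\Delta(G[U])$, not only a lower bound on $\delta(G[U])$: the partition lemma requires a bounded max-to-min degree ratio in order for the local lemma's dependency count to work out, and the paper's \cref{lem:giant:balanced} supplies exactly this.
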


\subsection{Applications to dense graphs}
Next, we consider the rigidity of dense graphs.
More specifically, we look at ``Dirac graphs'', that is, $n$-vertex graphs with minimum degree at least $n/2$.

\begin{theorem}
\label{thm:dirac}
  Let $\ell=\ell(n)$ satisfy $3\log{n}/2\leq \ell< n/2$.
  Let $G=(V,E)$ be an $n$-vertex graph with $\delta(G)\ge \frac{n}{2}+\ell$.
  Then,
  $G$ admits a strong $d$-rigid partition
  for $d= 2\ell/(3\log{n})$.
\end{theorem}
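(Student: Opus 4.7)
The plan is to show that a random partition $V = V_1 \cup \cdots \cup V_d$ is, with positive probability, a strong $d$-rigid partition; then \cref{cor:strong:rigid:partition} gives the $d$-rigidity of $G$. Set $m = n/2 + \ell$, and assign each vertex of $V$ independently and uniformly to one of the $d$ parts. The two quantities to control are $|V_i|$ (concentrated around $n/d$) and, for each $v \in V$ and $i \in [d]$, $|N(v) \cap V_i|$ (whose expectation is $\deg(v)/d \geq m/d$). By Chernoff--Hoeffding and a union bound over the $\Theta(nd)$ events of interest, the goal is to establish simultaneously, with high probability, that $|V_i| \leq (1+\eta)\,n/d$ and $|N(v) \cap V_i| \geq (1-\eta)\,m/d$ for some suitable $\eta = o(1)$.

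Assuming these concentration bounds, verify that $G[V_i, V_j]$ is connected for every $1 \leq i \leq j \leq d$. Suppose towards contradiction that $A \sqcup B$ is a cut of $V_i \cup V_j$ with $E_G(A,B) \cap E(V_i, V_j) = \emptyset$, and set $A_k = A \cap V_k$, $B_k = B \cap V_k$. If any of the four intersections is empty --- say $A_i = \emptyset$ with $i \neq j$ --- then every $v \in A_j$ has no neighbour in $V_i$, directly contradicting the lower bound on $|N(v) \cap V_i|$. Otherwise all four intersections are nonempty, and the no-edge conditions $E_G(A_i, B_j) = E_G(A_j, B_i) = 0$ applied to $v \in A_i$ and $v' \in B_i$ force $|A_j|, |B_j| \geq (1-\eta)\,m/d$; hence $|V_j| \geq 2(1-\eta)\,m/d = (1-\eta)(n/d + 2\ell/d)$. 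Combined with $|V_j| \leq (1+\eta)\,n/d$, this requires the excess $2\ell/d$ to be absorbed into an $O(\eta\,n/d)$ correction, contradicting $2\ell/d = 3\log n$ for sufficiently small $\eta$. The diagonal case $i = j$ is handled identically, with $V_i \cup V_j = V_i$ and the cut analysis yielding $|V_i| \geq 2(1-\eta)\,m/d$ in competition with $|V_i| \leq (1+\eta)\,n/d$.

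The hard part is the quantitative balance: the concentration error $\eta$ must satisfy $\eta \cdot n/d \ll \ell/d$, i.e., $\eta = o(\ell/n)$, while the union bound over $\Theta(nd)$ events still forces $\eta$ to carry a logarithmic factor. The specific choice $d = 2\ell/(3\log n)$ together with the hypothesis $\ell \geq (3/2)\log n$ yields $m/d \geq 3\log n$, which is exactly the scale at which additive Chernoff/Hoeffding tail bounds --- sharpened, if necessary, by using a balanced random partition to remove the size fluctuation --- give an error dominated by the gap $\ell/d = (3/2)\log n$, closing the argument. The constants $2/3$ in $d$ and $3/2$ in the lower bound on $\ell$ are precisely the values needed to make the exponents in the concentration bounds match the $\log n$ factor produced by the union bound.
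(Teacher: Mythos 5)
The paper's proof is a two-line reduction: since $\delta(G)\ge n/2+\ell$, every pair of vertices has at least $\deg(u)+\deg(v)-n\ge 2\ell$ common neighbours, and the result follows from \cref{thm:common_neighbours}. That theorem is proved by choosing a balanced random partition $V=V_1\cup\dots\cup V_d$ with $|V_i|=n/d$ and showing, via \cref{lemma:many_neighbours}, that every pair of vertices has a common neighbour in every $V_i$. This immediately gives connectivity of all $G[V_i,V_j]$: two vertices in the same part have a common neighbour in any other part, and two vertices in different parts $V_i,V_j$ are joined by a path of length at most $3$ through such common neighbours. The decisive point is that the union bound runs over pairs of vertices and parts (there are $n^2 d$ of these), and each event fails with probability at most $e^{-2\ell\cdot(n/d)/n}=e^{-2\ell/d}=n^{-3}$, so the total failure probability is $d/n<1$. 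Note that the quantity $2\ell/d=3\log n$ enters the exponent \emph{linearly}.

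Your proposal takes a genuinely different route — partition randomly so that every vertex has many neighbours in each part, then rule out cuts — which is the pattern the paper uses for \cref{lem:partition} in the sparse regime. Unfortunately, here it fails quantitatively, and this failure is not fixable. Your cut analysis correctly forces $2|N(v)\cap V_j|>|V_j|$, which with a balanced partition ($|V_j|=n/d$) requires $|N(v)\cap V_j|>n/(2d)$, i.e.\ a deviation below the mean $\deg(v)/d\ge m/d$ of at most the margin $\ell/d=(3/2)\log n$. Now estimate what Chernoff can deliver. For a union bound over the $\Theta(nd)$ degree events to succeed we need each to fail with probability $o(1/(nd))$, hence a Chernoff exponent of order at least $\log(nd)\ge\log n$. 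With $\mu=m/d$ and additive deviation $t=\ell/d$, the exponent is at most $t^2/(2\mu)$. Using $m/d = 3n\log n/(4\ell)+(3/2)\log n\ge 3\log n$ and $t=(3/2)\log n$ one gets $t/\mu\le 1/2$ always, so the sharper bound $\mu\,\phi(-t/\mu)$ yields at most $\mu\,\phi(-1/2)=(3\log n)\cdot\tfrac{1}{2}(1-\log 2)\approx 0.46\log n$ even in the most favourable regime $\ell=\Theta(n)$, which is strictly short of what the union bound demands. In the regime $\ell=O(\log n)$ the situation is far worse: then $d=O(1)$, $\mu=m/d=\Theta(n)$, the standard deviation of $|N(v)\cap V_i|$ is $\Theta(\sqrt n)$, and demanding concentration to within $(3/2)\log n$ is hopeless — the fluctuation exceeds the margin by a polynomial factor. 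More bluntly, the constraint $\eta\le\ell/(n+\ell)$ you derive for the relative error forces $\eta^2\,\mu\le\ell^2 m/((n+\ell)^2 d)\le O(\ell\log n/n)=o(\log n)$ throughout the allowed range of $\ell$, so no Chernoff-type bound reaches the required exponent. The paragraph asserting that the constants $2/3$ and $3/2$ ``are precisely the values needed to make the exponents match'' is where the proof breaks.

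The lesson in the comparison is that the paper avoids a per-vertex degree estimate altogether: it asks only that each pair have \emph{some} common neighbour in each part, a far weaker conclusion that still yields connectivity but whose failure probability decays exponentially in $\ell/d$ rather than quadratically in it.
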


\Cref{thm:dirac} is an immediate consequence of a more general result about
graphs in which every pair of vertices has ``many'' common neighbours;
see~\cref{thm:common_neighbours}.
For $\ell=\Theta(n)$, \cref{thm:dirac} is sharp up to constants.
Consider, for instance,
$G\sim G(n,p)$ for $p=\frac{1}{2}+2\eps$;
here,
$\delta(G)\ge \left(\frac{1}{2}+\eps\right)n$ \whp{}.
However,
the largest $d$ for which $G$ admits a strong $d$-rigid partition
is \whp{} $O(n/\log{n})$.
For smaller values of $\ell$, the following theorem offers a significantly improved bound
on the rigidity of the graph.
\begin{theorem}\label{thm:dirac:small}
  There exists $c>0$ such that
  for every $0\le d< c\sqrt{n}/\log{n}$,
  if $G$ is an $n$-vertex graph with $\delta(G)\ge \frac{n}{2}+d$
  then $G$ is $(d+1)$-rigid.
\end{theorem}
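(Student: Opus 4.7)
The plan is to prove the theorem by induction on $d$, combining the classical \emph{coning} and \emph{$0$-extension} operations from rigidity theory. The base case $d=0$ is immediate: any graph with $\delta(G)\ge n/2$ is connected (Dirac), hence $1$-rigid.

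For the inductive step, assume the statement holds for $d-1$, and let $G$ satisfy $\delta(G)\ge n/2+d$ with $d<c\sqrt{n}/\log n$. Fix any vertex $v\in V$ and put $S:=N[v]$, so $|S|\ge n/2+d+1$. Since $v$ is adjacent to every vertex of $N(v)$, the induced graph $G[S]$ is the cone over $G[N(v)]$ through $v$. By the classical fact that coning a $d$-rigid graph by a universal new vertex produces a $(d+1)$-rigid graph, it suffices to establish that $G[N(v)]$ is $d$-rigid. Granted this, I extend $(d+1)$-rigidity from $G[S]$ to $G$ by $0$-extension in dimension $d+1$: for every $w\in V\setminus S$,
\[
|N(w)\cap S| \;\ge\; \delta(G)-(|V\setminus S|-1) \;\ge\; (n/2+d)-(n/2-d-2) \;=\; 2d+2 \;\ge\; d+1,
\]
so adding $w$ (with any $d+1$ of its neighbors in the current rigid subgraph) preserves $(d+1)$-rigidity. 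Iterating over $V\setminus S$ yields $(d+1)$-rigidity of $G$.

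The crux is therefore showing $d$-rigidity of $H:=G[N(v)]$. This subgraph has $|V(H)|\ge n/2+d$ vertices; and since for every $u\in N(v)$ one has $|N_G(u)\cap N_G(v)|\ge 2\delta(G)-n\ge 2d$, we get $\delta(H)\ge 2d$. This is well below the threshold needed to re-apply the inductive hypothesis directly, so I would instead invoke \cref{cor:strong:rigid:partition} to build a strong $d$-rigid partition of $H$. Concretely, split $N(v)$ into $d$ balanced parts $W_1,\dots,W_d$; using the common-neighborhood lower bound ($|N(x)\cap N(y)|\ge 2d$ for any $x,y\in V$) together with the large size of each $W_i$, show that with positive probability under an equitable random partition, $H[W_i]$ is connected for each $i$ and $H[W_i,W_j]$ is connected for each $i<j$. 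The bound $d<c\sqrt n/\log n$ is precisely what is needed so that a union bound over the $\binom{d+1}{2}$ pair-connectivity events survives the tight Chernoff-type concentration inherent to the Dirac threshold.

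The main obstacle is exactly this last step: verifying $d$-rigidity of $H=G[N(v)]$. The difficulty is that the within-part and between-part expected degrees in a balanced partition sit essentially on the Dirac boundary, with only an $O(d/(d+1))$ slack above the critical threshold, so standard concentration bounds are tight. This is where the quantitative ceiling $d<c\sqrt{n}/\log n$ becomes essential; loosening it would require either richer structural information about $N(v)$ (beyond the minimum-degree bound $2d$), or a non-probabilistic construction of the strong $d$-rigid partition of $H$ that exploits the common-neighborhood structure more delicately.
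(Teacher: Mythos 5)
Your coning and $0$-extension machinery is sound: Whiteley's Cone Lemma does give that $G[N[v]]$ is $(d+1)$-rigid whenever $G[N(v)]$ is $d$-rigid, and since each $w\in V\setminus N[v]$ has at least $2d+2\ge d+1$ neighbours inside $N[v]$, iterated $0$-extension would indeed propagate $(d+1)$-rigidity to $G$. The gap is in the ``crux'' step: it is \emph{not} true in general that $H:=G[N(v)]$ is $d$-rigid for an arbitrary choice of $v$, and your proposed route to proving it cannot work. The bound $\delta(H)\ge 2\delta(G)-n\ge 2d$ is far too weak, and the common-neighbourhood estimate $|N_G(x)\cap N_G(y)|\ge 2d$ lives in $G$, not in $H$: almost all of those common neighbours (and $v$ itself) may lie outside $N(v)$. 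In fact $H$ need not even be connected. Take $V=\{v\}\cup K_1\cup K_2\cup X$ with $|K_1|=|K_2|=(n/2+d)/2$ and $|X|=n/2-d-1$; let $K_1,K_2,X$ each be cliques, join $v$ to all of $K_1\cup K_2$ and nothing else, join every vertex of $K_1\cup K_2$ to every vertex of $X$, and put no edges between $K_1$ and $K_2$. One checks $\delta(G)=n/2+d$, yet $G[N(v)]=G[K_1\cup K_2]$ is a disjoint union of two cliques, hence not even $1$-rigid. Your plan of building a strong $d$-rigid partition of $H$ by an equitable random split is therefore dead on arrival: $H$ may admit no strong $d$-rigid partition at all (disconnected, or bipartite and $d\ge3$). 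The inductive structure and the $d<c\sqrt n/\log n$ scaling don't rescue this, because the obstruction is structural, not a failure of concentration.

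The paper avoids the local-neighbourhood route entirely. It first uses Mader's theorem to extract from $G$ a subgraph $H$ that is $C(d+1)^2\log^2 n$-connected (the dense threshold $\delta\ge n/2+d$ gives $\bar d(G)\ge 4k$ for $k=C(d+1)^2\log^2 n$, which is where the $\sqrt n/\log n$ ceiling on $d$ comes from), and then invokes \cref{thm:connectivity} to conclude $H$ is $(d+1)$-rigid. It then takes a maximum vertex set $V_1$ with $G[V_1]$ $(d+1)$-rigid; if $V_1\ne V$ the key observation is that every $v\notin V_1$ has at most $d$ neighbours in $V_1$ (else $0$-extension would enlarge $V_1$), so $G[V\setminus V_1]$ still has minimum degree $\ge n/2$ inside a vertex set of size $n_2>n/2$, at which point \cref{thm:dirac} makes it $(d+1)$-rigid — contradicting maximality. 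The interplay between the maximality argument and the Dirac-type theorem on the complement is the idea your proposal is missing; it sidesteps having to certify rigidity of any particular induced neighbourhood.
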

\cref{thm:dirac:small} is sharp up to a multiplicative constant.
For example,
consider a graph composed of two cliques,
each of size $n/2+k$, intersecting in $2k$ vertices.
This graph, an $n$-vertex graph with a minimum degree of $n/2+k-1$,
has vertex-connectivity of $2k$ and, hence, is at most $2k$-rigid
(and in fact, it is easy to check that its rigidity is exactly $2k$).
It is also sharp in terms of the minimum degree,
as there are disconnected $n$-vertex graphs with $\delta(G)=\lfloor n/2\rfloor-1$.
We conjecture that for all $1\le \ell\le n/2-1$,
every graph with a minimum degree of at least $n/2+\ell$ is $\Omega(\ell)$-rigid:
\begin{conjecture}\label{conj:dirac}
  Let $1\leq \ell(n)<n/2$.
  Let $G$ be an $n$-vertex graph with minimum degree $\delta(G)\ge\frac{n}{2}+\ell(n)$.
  Then, $G$ is $d$-rigid for $d=\Omega(\ell(n))$.
\end{conjecture}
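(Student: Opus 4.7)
The plan is to split according to the magnitude of $\ell(n)$, invoking \cref{thm:dirac:small,thm:dirac} in their respective efficient regimes and filling the gap between them with a new argument. For $1 \leq \ell \leq c\sqrt{n}/\log n$, \cref{thm:dirac:small} already yields $(\ell+1)$-rigidity, so the conjecture holds in this range. For $\ell \geq c\sqrt{n}/\log n$, the bound from \cref{thm:dirac} is only $\Omega(\ell/\log n)$, and one must eliminate the logarithmic loss to match the conjectured $\Omega(\ell)$.

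A natural starting point is the elementary fact that $\delta(G) \geq n/2 + \ell$ implies that every two vertices share at least $2\ell$ common neighbours, and that, by a standard separator argument, $\kappa(G) \geq 2\ell + 2$. Building on this, I would try to construct a strong $d$-rigid partition (of type I or II) with $d = \Omega(\ell)$ by partitioning $V$ uniformly at random into $d$ parts. Concentration shows that each bipartite graph $G[V_i, V_j]$ is likely connected provided $n/d$ exceeds a constant multiple of $\log n$, yielding rigidity $\Omega(n/\log n)$ in the densest regime but still losing a $\log n$ factor at intermediate densities. To push further, one might attempt a derandomised partition exploiting the global common-neighbourhood structure, or, for $\ell = \Omega(n)$, a clique-gluing approach: decompose $G$ as a union of overlapping near-complete subgraphs (each trivially $d$-rigid) and glue them via standard rigidity lemmas along intersections of size at least $d+1$.

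The hard part is the intermediate regime $\sqrt{n}/\log n \ll \ell \ll n$, where none of the available tools deliver linear-in-$\ell$ rigidity. The connectivity bound $\kappa(G) \geq 2\ell + 2$ combined with Vill\'anyi's theorem gives only $\Omega(\sqrt{\ell})$-rigidity; the CDS-partition route via \cref{cor:cdsp:rigid} is capped at $\sqrt{k}$-rigidity from $k$ disjoint connected dominating sets; and random strong rigid partitions incur an unavoidable $\log n$ loss from the union bound over monochromatic cuts inherent to \cref{thm:rigid:partition}. Overcoming this will likely require a genuinely new technique, such as a probabilistic construction of rigid partitions that leverages the full pairwise common-neighbourhood count to bypass the naive union bound, or a recursive matroidal amplification of \cref{thm:dirac:small} across overlapping dense subgraphs with controlled intersections.
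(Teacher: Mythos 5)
This statement is labelled a \emph{conjecture} in the paper, and the paper offers no proof of it; there is therefore nothing to compare your attempt against. What you have written is, correctly and candidly, not a proof but a survey of partial results and a diagnosis of where the difficulty lies.

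Your accounting of what the paper's tools deliver is accurate. \cref{thm:dirac:small} handles $\ell = O(\sqrt{n}/\log n)$ directly. \cref{thm:dirac} (and its generalisation \cref{thm:common_neighbours}) gives $\Omega(\ell/\log n)$-rigidity via a random partition and \cref{lemma:many_neighbours}, where the $\log n$ loss is forced by the union bound over pairs of vertices. You are also right that the connectivity bound $\kappa(G)\ge 2\ell+2$ fed into Vill\'anyi's theorem yields only $\Omega(\sqrt{\ell})$, and that a CDS-partition of size $k$ yields only $\Omega(\sqrt{k})$ via \cref{cor:cdsp:rigid}. So every route currently available in the paper falls short by a $\log n$ or square-root factor precisely in the intermediate regime $\sqrt{n}/\log n \ll \ell \ll n$, as you say.

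The point to make explicit is that your proposal does not close this gap: the suggestions in your final paragraph (derandomised partitions leveraging common neighbourhoods, clique-gluing, recursive matroidal amplification) are speculative directions, not arguments. In particular, the clique-gluing idea needs the overlapping dense subgraphs to intersect in $\ge d+1$ vertices \emph{and} already be $d$-rigid, and you have not exhibited such a decomposition from a minimum-degree hypothesis alone; and bypassing the union bound in the random-partition approach would require some positive correlation or second-moment structure that is not supplied. So the status after your write-up is the same as in the paper: the conjecture is open for $\ell$ in the intermediate range, and you should present your text as evidence for the conjecture and an obstruction analysis, not as a proof.
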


\vspace{1em}

\paragraph{Notation and terminology}
Let $G=(V,E)$ be a graph.
For two (not necessarily disjoint) vertex sets $A$, $B$,
we let $E(A, B)$
be the set of edges having one endpoint in $A$ and the other in $B$,
and write $E(A)=E(A,A)$.
We denote further $e(A,B)=\left|\left\{(u,v)\in A\times B:\ \{u,v\}\in E\right\}\right|$.
Note that if $A\cap B\ne\es$ then it might be the case
that $e(A,B)>|E(A,B)|$.
We denote by $N(A)$ the
{\em external} neighbourhood of $A$,
that is, the set of all vertices in $V\sm A$
that have a neighbour in $A$.
In the above notation we often replace $\{v\}$ with $v$ for abbreviation.
The degree of a vertex $v\in V$,
denoted by $\deg(v)$,
is its number of incident edges.
We let $\delta(G)$ and
$\Delta(G)$ be the minimum and maximum degrees
of $G$, respectively.

Throughout the paper, all logarithms are in the natural basis.
If $f,g$ are functions of $n$,
we write $f\ll g$ if $f=o(g)$,
$f\gg g$ if $f=\omega(g)$,
and $f\sim g$ if $f=(1+o(1))g$.
For the sake of clarity of presentation,
we will systematically omit floor and ceiling signs.

\paragraph{Paper organization}
The structure of this paper is as follows.
In \cref{sec:rigid:partitions}
we present and prove the quantitative counterpart of \cref{thm:rigid:partition}
(\cref{thm:rigid:partition:quant}),
and subsequently detail the proofs of 
\cref{cor:cdsp:rigid,cor:strong:rigid:partition,cor:strong:bipartite:rigid:partition}
in \cref{sec:rigid:partitions:special},
of \cref{thm:connectivity}
in \cref{sec:conn},
and of \cref{prop:complete:bipartite}
in \cref{sec:bipartite}.
In \cref{sec:remarks} we discuss some of the limitations of $d$-rigid partitions, and present examples of $d$-rigid graphs that do not admit $d$-rigid partitions (for certain values of $d$).
\Cref{sec:conditions}, which is self-contained,
introduces general easily verifiable sufficient conditions
for the existence of rigid partitions.
In \cref{sec:randomgraphs},
we delve into the applications of this method to various (pseudo)random graphs, 
presenting the proofs of
\cref{thm:nkl,thm:gnr,thm:gnt,thm:gnp,thm:gnnp,thm:rigid_component}.
We conclude in \cref{sec:dirac} with the proof of \cref{thm:dirac,thm:dirac:small}.

\section{Rigidity via rigid partitions}\label{sec:rigid:partitions}

\subsection{Stiffness matrices and $d$-dimensional algebraic connectivity}\label{sec:stiffness}

Let $G=(V,E)$ be a graph and $\p:V\to \RR^d$ be an embedding. For $u,v\in V$, let 
\[
d_{uv}(\p)=\frac{\p(u)-\p(v)}{\|\p(u)-\p(v)\|}\in \RR^d.
\]
We define the \defn{normalized rigidity matrix} $\Rhat(G,\p)\in \RR^{d|V|\times |E|}$ as follows: the columns of $\Rhat(G,\p)$ are indexed by the edges of $G$, and the rows are indexed by assigning $d$ consecutive coordinates to each vertex of $V$. For an edge $\{u,v\}\in E$, the column of $\Rhat(G,p)$ indexed by $\{u,v\}$  is the vector $\bv_{u,v}\in \RR^{d|V|}$ be defined by
\[
   \bv_{u,v}\trans= \kbordermatrix{
     & &  &  & u & & & & v & & & \\
     & 0 & \ldots & 0 & d_{uv}(\p) \trans & 0 & \ldots & 0 & d_{vu}(\p)\trans & 0 & \ldots & 0}.
\]
That is, the vector $\bv_{u,v}$ is supported on the coordinates associated to the vertices $u$ and $v$, where it is equal to the $d$-dimensional vectors $d_{uv}$ and $d_{vu}$ respectively.

Note that $\Rhat(G,\p)$ can be obtained from the unnormalized rigidity matrix $\R(G,\p)$ by first dividing each row of $\R(G,\p)$ by a non-zero constant $\|\p(u)-\p(v)\|$, and then by taking the transpose of the matrix. In particular, $\Rhat(G,\p)$ and $\R(G,\p)$ have the same rank.

The \defn{stiffness matrix} of the framework $(G,\p)$ is the matrix
\[
    \sL(G,\p)= \Rhat(G,\p) \Rhat(G,\p)\trans \in \RR^{d|V|\times d|V|}.
\]
For a symmetric matrix $M\in \RR^{m\times m}$ and $1\leq i\leq m$, let $\lambda_i(M)$ be the $i$-th smallest eigenvalue of $M$. Note that $\sL(G,\p)$ is a positive semi-definite matrix satisfying
$\rank(\sL(G,\p))=\rank(\Rhat(G,\p))=\rank(\R(G,\p))\leq d|V|-\binom{d+1}{2}$. Therefore, we have $\lambda_i(\sL(G,\p))=0$ for $1\leq i\leq \binom{d+1}{2}$, and $(G,\p)$ is infinitesimally rigid if and only if $\lambda_{\binom{d+1}{2}+1}(\sL(G,\p))>0$. 

In~\cite{JT22}, Jord\'an and Tanigawa introduced the \defn{$d$-dimensional algebraic connectivity of $G$}, defined by
\[
    \na_d(G)= \sup \left\{ \lambda_{\binom{d+1}{2}+1}(\sL(G,\p))\right\},
\]
where the supremum is taken over all embeddings $\p:V\to \RR^d$.\footnote{The original definition in~\cite{JT22} allows $\p$ to be non-injective, however both definitions are equivalent
(see~\cite{LNPR22+}*{Lemma~2.4}).} 

Notice that $\na_d(G)>0$ if and only if $G$ is $d$-rigid. 
We can think of $\na_d(G)$ as a ``quantitative measure'' of the $d$-rigidity of $G$
--- we expect graphs with large $\na_d(G)$ to be more ``strongly rigid",
in certain senses (see e.g.~\cite{JT22}*{Corollary~8.2} for a result quantifying such a phenomenon).

For $d=1$, it is easy to check that, for any embedding $\p:V\to \RR$, the normalized rigidity matrix $\Rhat(G,\p)$ is just the incidence matrix of the orientation of $G$ induced by the embedding, and the stiffness matrix $\sL(G,\p)$ is exactly the graph Laplacian $\sL(G)$.
Therefore, the one-dimensional algebraic connectivity of $G$ coincides with the classical notion of algebraic connectivity (a.k.a.\ Laplacian spectral gap) of a graph,
introduced by Fiedler in~\cite{Fie73}.
We denote in this case $\na_1(G)=\na(G)=\lambda_2(\sL(G))$. For convenience, we define $\na(G)=\infty$ for the graph $G=(\{v\},\es)$.

We can now state the following quantitative version of \cref{thm:rigid:partition}.

\begin{theorem}\label{thm:rigid:partition:quant}
  Let $G=(V,E)$ be a graph, and let $\left(\{V_i\}_{i=1}^{d+1}, \{E_{ij}\}_{1\leq i<j\leq d+1}\right)$
  be a $d$-rigid partition of $G$. For $1\leq i<j\leq d+1$, let $G_{ij}=(V_i\cup V_j, E_{ij})$.
  Then,
  \[
      \na_d(G)\geq \min \left\{ \frac{ \na(G_{ij})}{2} :\, 1\leq i<j\leq d+1\right\}.
  \]
  In particular, since $G_{ij}$ is connected for all $1\leq i<j\leq d+1$, $G$ is $d$-rigid.
\end{theorem}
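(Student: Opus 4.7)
The plan is to extend the quantitative technique of Lew, Nevo, Peled and Raz~\cite{LNPR23+} for strong $d$-rigid partitions of type I to the more flexible $d$-rigid partition setting. The strategy is to construct a specific limiting embedding adapted to the partition and bound the eigenvalues of the associated stiffness matrix.

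Concretely, let $p_1, \ldots, p_{d+1} \in \RR^d$ be the vertices of a regular $d$-simplex, and consider a family of embeddings $\p_\epsilon\colon V \to \RR^d$ in which each $v \in V_i$ is mapped near $p_i$ via small perturbations $\epsilon q_v$ chosen according to the hierarchical decomposition of $V_i$ coming from iterated monochromatic cuts. As $\epsilon \downarrow 0$, every edge $\{u, v\} \in E_{ij}$ with endpoints in distinct parts $V_i, V_j$ has normalised direction $d_{uv}(\p_\epsilon) \to u_{ij} := (p_i - p_j)/\|p_i - p_j\|$, and the monochromatic cut structure allows the perturbations to be chosen so that within-part edges in $E_{ij}$ also have their normalised directions converging to $u_{ij}$. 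The stiffness matrices $\sL(G, \p_\epsilon)$ then converge to
\[
\sL_0 \;=\; \sum_{1 \le i < j \le d+1} \bigl(u_{ij} u_{ij}^\top\bigr) \otimes L(G_{ij}),
\]
where $L(G_{ij})$ is the Laplacian of $G_{ij}$ zero-padded on $\RR^V$.

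For a variational lower bound on $\lambda_{\binom{d+1}{2}+1}(\sL_0)$, I would use the tensor identity $x^\top\bigl((uu^\top) \otimes L\bigr) x = \phi_u(x)^\top L\, \phi_u(x)$, with $\phi_u(x)_v := u^\top x_v$, together with the Laplacian spectral gap to obtain
\[
x^\top \sL_0\, x \;\ge\; \min_{i<j} \na(G_{ij}) \cdot \sum_{i<j} \|\Pi_{ij} \phi_{u_{ij}}(x)\|^2,
\]
where $\Pi_{ij}$ projects onto the mean-zero vectors on $V_i \cup V_j$. Setting $\Phi(x) = \bigl(\Pi_{ij} \phi_{u_{ij}}(x)\bigr)_{i<j}$, a direct calculation shows that $\ker \Phi$ equals the $\binom{d+1}{2}$-dimensional space $T$ of infinitesimal isometries of the simplex lifted constantly to each part. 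Splitting $x = y + z$ into a macroscopic component $y \in W$ (constant on each $V_i$, with common value $\bar{x}_i$) and a microscopic component $z \in W^\perp$ (mean-zero on each $V_i$), the cross term $\langle \Phi(y), \Phi(z) \rangle$ vanishes, since $\phi_{u_{ij}}(y)$ is constant on each of $V_i, V_j$ while $z$ averages to zero there; hence $\|\Phi(x)\|^2 = \|\Phi(y)\|^2 + \|\Phi(z)\|^2$. The regular simplex identity $\sum_{j \ne i} u_{ij} u_{ij}^\top \succeq \tfrac{1}{2} I$ gives $\|\Phi(z)\|^2 \ge \tfrac{1}{2} \|z\|^2$, and a parallel simplex-symmetry computation yields $\|\Phi(y)\|^2 \ge \tfrac{1}{2} \|y\|^2$ on $W \ominus T$. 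Combining, $\lambda_{\binom{d+1}{2}+1}(\sL_0) \ge \tfrac{1}{2} \min_{i<j} \na(G_{ij})$, and eigenvalue continuity transfers the bound to an injective $\p_\epsilon$ for small $\epsilon > 0$, yielding the claimed estimate on $\na_d(G)$.

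Two technical obstacles stand out. First, justifying the limiting matrix $\sL_0$ when $E_{ij}$ contains within-part edges is the crux: the monochromatic cut condition enters here essentially, via a hierarchical choice of perturbations $q_v$ that aligns within-part edge directions with $u_{ij}$ in the limit. Second, the macroscopic estimate $\|\Phi(y)\|^2 \ge \tfrac{1}{2} \|y\|^2$ on $W \ominus T$ must be verified despite possible imbalances between the sizes $|V_i|$; this relies on the regular simplex symmetry together with the explicit formula $\|\Pi_{ij} \phi_{u_{ij}}(y)\|^2 = \tfrac{|V_i|\,|V_j|}{|V_i|+|V_j|} \bigl(u_{ij}^\top (\bar{x}_i - \bar{x}_j)\bigr)^2$, and a delicate use of the infinitesimal-isometry constraint $y \perp T$.
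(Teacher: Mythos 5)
Your overall construction --- a limiting framework placed near the vertices of a regular simplex, with within-part edge directions controlled via iterated monochromatic cuts --- is the same as the paper's (\cref{lemma:constructing_limit_frameworks,lemma:combing}), and your formula $\sL_0 = \sum_{i<j}(u_{ij}u_{ij}^\top)\otimes L(G_{ij})$ for the limit stiffness is correct. The two arguments diverge in the eigenvalue estimate. You stay on the vertex space $\RR^{d|V|}$ and split $x=y+z$ into macroscopic and microscopic parts, whereas the paper passes to the edge space by working with the lower stiffness matrix $\sL^{-}(G,\q)=\Rhat(G,\q)^\top\Rhat(G,\q)$ and decomposing it as $\tfrac12 M+\tfrac12 T$, with $M$ a positive semidefinite signed-Laplacian lower stiffness and $T$ block-diagonal whose blocks share their nonzero spectrum with the Laplacians $\sL(G_{ij})$. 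Weyl's inequality together with \cref{lemma:block_diagonal} then finishes in one stroke, with no macro/micro split and no separate kernel accounting.

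The gap in your proposal is the macroscopic bound $\|\Phi(y)\|^2\ge\tfrac12\|y\|^2$ on $W\ominus T$, which you flag as delicate but do not prove. Unwound, it is the assertion that for $K_{d+1}$ at the regular simplex with edge weights $w_{ij}=|V_i|\,|V_j|/(|V_i|+|V_j|)$, the stiffness quadratic form dominates $\tfrac12$ of the vertex-weighted $\ell^2$-norm (weights $|V_i|$) on the orthogonal complement of the trivial motions. The ``regular simplex symmetry'' you invoke does not deliver this: the weights $|V_i|$ break that symmetry, and the vertex-stabiliser computation that gives your microscopic estimate $\sum_{j\ne i}u_{ij}u_{ij}^\top\succeq\tfrac12 I$ has no analogue here. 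The claim is in fact true (and tight: in the unweighted case $\lambda_{\binom{d+1}{2}+1}$ of the simplex stiffness equals $1$ against $\na(\text{edge})=2$), but establishing it requires a genuine argument; one way to see it is to factor the vertex-and-edge-weighted lower stiffness of $K_{d+1}$ as $\tfrac12 I+\tfrac12 A^\top A$ for an explicit matrix $A$ with entries of the form $\sqrt{|V_j|/(|V_i|+|V_j|)}$ --- that is, precisely the paper's decomposition specialised to a weighted simplex. So your variational reorganisation is genuinely different in form, but its crux reduces to (and is no easier than) the paper's central decomposition, and as written it is unproved. You should also record explicitly that $\ker\sL_0=T$, i.e.\ $\dim\ker\sL_0=\binom{d+1}{2}$; this follows since $\ker\sL_0\subseteq\ker\Phi=T$ once every $\na(G_{ij})>0$, and without it the Rayleigh-quotient bound does not apply to the $\bigl(\binom{d+1}{2}+1\bigr)$-st eigenvalue.
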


\subsection{Limit frameworks}\label{sec:limit:frameworks}

For the proof of \cref{thm:rigid:partition:quant},
we will need to consider the limit of a sequence of frameworks.
In order to do that, we introduce the notion of \emph{generalized frameworks}, first studied by Tay in~\cite{Tay93}
(although we use here slightly different definitions and notation; see also~\cite{SS02}). 

Let $\SS^{d-1}\subset\RR^d$ denote the $(d-1)$-dimensional unit sphere.
Let $G=(V,E)$ be a graph,
and let $\q: \{(u,e):\, e\in E, u\in e\}\to \SS^{d-1}$.
The pair $(G,\q)$ is called a \defn{generalized $d$-dimensional framework}.
For $e=\{u,v\}\in E$, we define $\bv_{u,v}\in \RR^{d|V|}$ by
\[
   \bv_{u,v}\trans= \kbordermatrix{
     & &  &  & u & & & & v & & & \\
     & 0 & \ldots & 0 & \q(u,e)\trans & 0 & \ldots & 0 & \q(v,e)\trans & 0 & \ldots & 0}.
\]
We define the \defn{rigidity matrix} $\Rhat(G,\q)\in \RR^{d|V|\times |E|}$
as the matrix whose columns are the vectors $\bv_{u,v}$ for all $\{u,v\}\in E$.
We define the \defn{stiffness matrix} of $(G,\q)$ as
\[
    \sL(G,\q)=\Rhat(G,\q) \Rhat(G,\q)\trans \in \RR^{d|V|\times d|V|}.
\] 
Similarly, we define the \defn{lower stiffness matrix} of $(G,\q)$ as
\begin{equation}\label{eq:Lminus}
    \sL^{-}(G,\q)=\Rhat(G,\q)\trans \Rhat(G,\q) \in \RR^{|E|\times |E|}.
\end{equation}
We have the following explicit description of the lower stiffness matrix, which is an immediate consequence of \eqref{eq:Lminus}.
\begin{lemma}\label{lemma:generalized_lower_stiffness}
Let $(G,\q)$ be a generalized $d$-dimensional framework. Let $e_1,e_2\in E$. Then,
\[
    \sL^{-}(G,\q)(e_1,e_2)= \begin{cases}
                    2 & \text{ if } e_1=e_2,\\
    \q(u,e_1)\cdot \q(u,e_2) & \text{ if } e_1=\{u,v\} \text{ and  } e_2=\{u,w\},\\
                    0 & \text{ otherwise.}
                    \end{cases} 
\]
\end{lemma}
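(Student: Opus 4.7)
The plan is to proceed by direct computation from the definition of $\sL^-(G,\q)$ in \eqref{eq:Lminus}. Since $\sL^-(G,\q) = \Rhat(G,\q)\trans \Rhat(G,\q)$, the $(e_1,e_2)$ entry is exactly the inner product of the columns of $\Rhat(G,\q)$ indexed by $e_1$ and $e_2$, namely $\bv_{e_1}\cdot \bv_{e_2}$, where for $e=\{u,v\}$ the vector $\bv_e = \bv_{u,v}\in \RR^{d|V|}$ is supported on the $d$-dimensional blocks corresponding to $u$ and $v$, with values $\q(u,e)$ and $\q(v,e)$ respectively.

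I would then split into three cases based on $|e_1\cap e_2|$. First, if $e_1 = e_2 = \{u,v\}$, then the two columns coincide, so the entry equals $\|\bv_{u,v}\|^2 = \|\q(u,e_1)\|^2 + \|\q(v,e_1)\|^2 = 1+1 = 2$, using that $\q$ takes values in $\SS^{d-1}$. Second, if $e_1\ne e_2$ but they share a common vertex, write $e_1=\{u,v\}$ and $e_2=\{u,w\}$ with $v\ne w$. The supports of $\bv_{e_1}$ and $\bv_{e_2}$ intersect only in the block corresponding to $u$, so all other contributions to the inner product vanish, yielding $\q(u,e_1)\cdot \q(u,e_2)$. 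Third, if $e_1\cap e_2 = \es$, the supports of the two column vectors are disjoint, so the inner product is $0$.

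This is essentially a bookkeeping exercise — the main (trivial) obstacle is simply being careful about the block structure of the columns and verifying that the supports interact as described in each case. No serious difficulty is expected, and the proof should be only a few lines long.
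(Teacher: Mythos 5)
Your proof is correct and is exactly the computation the paper has in mind when it states the lemma is ``an immediate consequence of \eqref{eq:Lminus}'': the $(e_1,e_2)$ entry of $\Rhat(G,\q)\trans\Rhat(G,\q)$ is the inner product of the corresponding columns $\bv_{e_1}$, $\bv_{e_2}$, and the three cases follow from the block supports and the fact that $\q$ takes values in $\SS^{d-1}$. Nothing to add.
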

Note that the non-zero eigenvalues of $\sL(G,\q)$ coincide with those of $\sL^{-}(G,\q)$. In particular, assuming $|E|\geq d|V|-\binom{d+1}{2}$, we have
\begin{equation}\label{eq:eigen}
    \lambda_{k}(\sL(G,\q))= \lambda_{|E|-d|V|+k}(\sL^{-}(G,\q)),
\end{equation}
for all $k\geq \binom{d+1}{2}+1$.

Given $G=(V,E)$ and an embedding $\p:V\to \RR^d$, we can define a generalized framework $(G,\q)$ by
\begin{equation}\label{eq:generalized_from_regular_framework}
    \q(u,\{u,v\})=d_{uv}(\p)= \frac{\p(u)-\p(v)}{\|\p(u)-\p(v)\|}
\end{equation}
for all $\{u,v\}\in E$.
The generalized framework $(G,\q)$
is equivalent to the framework $(G,\p)$,
in the sense that $\Rhat(G,\q)=\Rhat(G,\p)$, and therefore $\sL(G,\q)=\sL(G,\p)$.
The main ``advantage" of generalized frameworks is that
they allow us to talk about limits of frameworks:

We say that a generalized framework $(G,\q)$ is a
\defn{$d$-dimensional limit framework} if there exists
a sequence of $d$-dimensional frameworks $\{(G,\p_n)\}_{n=1}^{\infty}$ such that
\[
    \q(u,\{u,v\})= \lim_{n\to\infty} d_{uv}(\p_n)
\]
for all $\{u,v\}\in E$.
We denote in this case $(G,\p_n)\to (G,\q)$.
Note that if $(G,\q)$ is a limit $d$-dimensional framework,
it must satisfy $\q(u,\{u,v\})=-\q(v,\{u,v\})$ for all $\{u,v\}\in E$.

Generalized $1$-dimensional frameworks correspond to \emph{signings} of the graph $G$
(that is, to functions
  $\eta:\{(u,e):\, e\in E, u\in e\}\to\{-1,1\}$),
and their stiffness matrix is the ``signed Laplacian" corresponding to the signing $\eta$
(for more on the theory of signed graphs and their associated matrices,
  see, e.g.,~\cite{Zas10}).
The $1$-dimensional limit frameworks are exactly those corresponding to \emph{orientations}
of $G$
  (that is, to signings satisfying $\eta(u,\{u,v\})=-\eta(v,\{u,v\})$ for all $\{u,v\}\in E$).
Actually,
any $1$-dimensional limit framework can be obtained from an embedding $\p:V\to\RR^1$
as in~\eqref{eq:generalized_from_regular_framework}
(without the need of taking a limit of a sequence of embeddings).
The stiffness matrix $\sL(G,\eta)$ in this case is just the usual graph Laplacian $\sL(G)$.

\begin{lemma}\label{lemma:limit_framework_ad}
If $(G,\q)$ is a $d$-dimensional limit framework, then
\[
\na_d(G)\geq \lambda_{\binom{d+1}{2}+1}(\sL(G,\q)).
\]
\end{lemma}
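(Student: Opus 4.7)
The plan is to realize the inequality as a limit of the definitional supremum. By hypothesis there exists a sequence of embeddings $\p_n:V\to\RR^d$ such that the generalized frameworks $(G,\p_n)$ (viewed via~\eqref{eq:generalized_from_regular_framework}) satisfy $d_{uv}(\p_n)\to \q(u,\{u,v\})$ for every edge $\{u,v\}\in E$. Since each entry of $\Rhat(G,\p_n)$ is (up to sign) a coordinate of some $d_{uv}(\p_n)$, this gives entrywise convergence $\Rhat(G,\p_n)\to \Rhat(G,\q)$. Multiplication being continuous, it follows that
\[
\sL(G,\p_n)=\Rhat(G,\p_n)\Rhat(G,\p_n)\trans \;\longrightarrow\; \Rhat(G,\q)\Rhat(G,\q)\trans = \sL(G,\q)
\]
in any matrix norm on $\RR^{d|V|\times d|V|}$.

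Next I would invoke the standard fact that for symmetric real matrices, each ordered eigenvalue $\lambda_k(\cdot)$ is a continuous function of the matrix (this is, for instance, a direct consequence of the Hoffman–Wielandt inequality or of Weyl's perturbation inequality). Applying this to the index $k=\binom{d+1}{2}+1$ yields
\[
\lambda_{\binom{d+1}{2}+1}\bigl(\sL(G,\p_n)\bigr) \;\longrightarrow\; \lambda_{\binom{d+1}{2}+1}\bigl(\sL(G,\q)\bigr).
\]
Finally, by definition of $\na_d(G)$ as a supremum over embeddings,
\[
\na_d(G) \;\geq\; \lambda_{\binom{d+1}{2}+1}\bigl(\sL(G,\p_n)\bigr) \qquad\text{for every } n,
\]
and taking the limit $n\to\infty$ gives the desired inequality.

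There is no real obstacle in this proof; the only point worth checking carefully is that the definition of $\na_d$ permits restricting to embeddings (not generalized frameworks), which is fine because each $(G,\p_n)$ arises from an honest embedding, and only the limit $(G,\q)$ need be a generalized framework. The continuity of eigenvalues of symmetric matrices is classical and requires no further justification.
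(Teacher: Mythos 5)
Your proof is correct and follows essentially the same route as the paper's: pass from the defining sequence of embeddings to entrywise convergence of the normalized rigidity matrices, hence convergence of the stiffness matrices, then invoke continuity of ordered eigenvalues and the supremum definition of $\na_d$. The paper's version is identical in substance, differing only in that it phrases the final step as $\na_d(G)\ge\sup_n\lambda_{\binom{d+1}{2}+1}(\sL(G,\p_n))\ge\lambda_{\binom{d+1}{2}+1}(\sL(G,\q))$ rather than taking a limit.
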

\begin{proof}
Let $(G,\p_n)\to (G,\q)$, where $\{\p_n\}_{n=1}^{\infty}$ is a sequence of embeddings of $V$ into $\mathbb{R}^d$.
Note that $\Rhat(G,\q)$ is the entry-wise limit of the sequence of matrices $\Rhat(G,\p_n)$, and therefore $\sL(G,\q)$ is the limit of the matrices $\sL(G,\p_n)$.  Hence, by the continuity of eigenvalues,
\[
    \lambda_{\binom{d+1}{2}+1}(\sL(G,\q)) =\lim_{n\to\infty} \lambda_{\binom{d+1}{2}+1}(\sL(G,\p_n)).
\]
Thus, we obtain
\[
    \na_d(G)\geq \sup_{n} \lambda_{\binom{d+1}{2}+1}(\sL(G,\p_n)) \geq \lambda_{\binom{d+1}{2}+1}(\sL(G,\q)).\qedhere
\]
\end{proof}

\begin{lemma}\label{lemma:constructing_limit_frameworks}
Let $G=(V,E)$.
Let $V=V_1\cup\cdots\cup V_k$ be a colouring of $V$,
and let $x_1,\ldots,x_k$ be $k$ distinct points in $\RR^d$.
Assume we have for each $1\leq i\leq k$ a $d$-dimensional limit framework $(G[V_i],\q_i)$.
Define $\q: \{(u,e):\, e\in E, u\in e\}\to \SS^{d-1}$ by
\[
\q(u,\{u,v\})= \begin{cases}
    (x_i-x_j)/\|x_i-x_j\| & \text{ if } u\in V_i, v\in V_j \text{ for } i\neq j,\\
    \q_i(u,\{u,v\}) & \text{ if } u,v\in V_i \text{ for } 1\leq i\leq k,    
\end{cases}
\]
for all $\{u,v\}\in E$.
Then, $(G,\q)$ is a $d$-dimensional limit framework.
\end{lemma}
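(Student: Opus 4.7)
The plan is to construct an explicit sequence of $d$-dimensional frameworks $(G,\p_n)$ converging to $(G,\q)$ by ``shrinking'' each piece $G[V_i]$ around its designated point $x_i$. For each $i$, fix a sequence of embeddings $\p_{i,n}\colon V_i\to \RR^d$ witnessing that $(G[V_i],\q_i)$ is a limit framework, i.e.\ $d_{uv}(\p_{i,n})\to \q_i(u,\{u,v\})$ for every $\{u,v\}\in E(G[V_i])$; if $V_i$ is a singleton, set $\p_{i,n}(v)=0$. Choose a sequence $\eps_n\to 0$ of positive reals (for instance $\eps_n=1/n$), and define
\[
  \p_n(u) = x_i + \eps_n\,\p_{i,n}(u) \qquad\text{whenever } u\in V_i.
\]
For each sufficiently large $n$ this is a valid embedding into $\RR^d$ for the purpose of defining $d_{uv}(\p_n)$, since edges within $V_i$ inherit the distinctness from $\p_{i,n}$ (note $d_{uv}(\p_{i,n})$ is well defined by assumption), while cross-edges $\{u,v\}$ with $u\in V_i$, $v\in V_j$, $i\ne j$, satisfy $\p_n(u)-\p_n(v)=(x_i-x_j)+\eps_n(\p_{i,n}(u)-\p_{j,n}(v))\ne 0$ once $\eps_n$ is small enough (here we use $x_i\ne x_j$).

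Next I would verify the two required convergences. For an intra-cluster edge $\{u,v\}\subseteq V_i$, the scalar $\eps_n$ cancels in the normalisation, giving
\[
  d_{uv}(\p_n) = \frac{\eps_n(\p_{i,n}(u)-\p_{i,n}(v))}{\|\eps_n(\p_{i,n}(u)-\p_{i,n}(v))\|} = d_{uv}(\p_{i,n}) \longrightarrow \q_i(u,\{u,v\}) = \q(u,\{u,v\}).
\]
For a cross-edge with $u\in V_i$, $v\in V_j$, $i\ne j$, the displacement $\p_n(u)-\p_n(v)$ is a bounded perturbation of the nonzero fixed vector $x_i-x_j$ of order $\eps_n$, so by continuity of $y\mapsto y/\|y\|$ at $x_i-x_j$,
\[
  d_{uv}(\p_n) \longrightarrow \frac{x_i-x_j}{\|x_i-x_j\|} = \q(u,\{u,v\}).
\]
This matches both cases of the definition of $\q$, so $(G,\p_n)\to (G,\q)$ and $(G,\q)$ is a $d$-dimensional limit framework.

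The proof is essentially a direct construction, so I expect no real obstacles. The only minor subtlety is ensuring the sequence $\p_n$ is actually an embedding in the relevant sense (edge endpoints map to distinct points, so the normalisation $d_{uv}(\p_n)$ makes sense for every $n$ large enough); this is handled by taking $\eps_n$ small and exploiting that the $x_i$ are distinct and that the $\p_{i,n}$ are themselves embeddings. An alternative, slightly cleaner phrasing would work entry-wise on the matrices $\Rhat(G,\p_n)$ and pass directly to the limit, but the edge-by-edge check above is the most transparent route.
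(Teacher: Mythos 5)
Your construction is the same as the paper's up to a global rescaling: the paper sets $\p_n(u)=n\,x_i+\p_{i,n}(u)$ after normalising $\|\p_{i,n}(u)\|\le 1$, whereas you set $\p_n(u)=x_i+\eps_n\,\p_{i,n}(u)$ with $\eps_n\to 0$. Dividing the paper's $\p_n$ by $n$ gives exactly your version with $\eps_n=1/n$, so conceptually these are identical.

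However, there is a genuine gap in your cross-edge step that the paper closes and you do not. You claim that $\p_n(u)-\p_n(v)=(x_i-x_j)+\eps_n\bigl(\p_{i,n}(u)-\p_{j,n}(v)\bigr)$ is ``a bounded perturbation of $x_i-x_j$ of order $\eps_n$,'' but nothing in your argument controls $\|\p_{i,n}(u)-\p_{j,n}(v)\|$. A limit framework only records the convergence of the \emph{unit} vectors $d_{uv}(\p_{i,n})$, so the embeddings $\p_{i,n}$ witnessing that convergence can blow up arbitrarily fast in $n$; if $\|\p_{i,n}(u)\|$ grows faster than $1/\eps_n$, the perturbation term does not vanish (and indeed your assertion that $\p_n(u)\neq\p_n(v)$ ``once $\eps_n$ is small enough'' also fails). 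Your parenthetical suggestion $\eps_n=1/n$ therefore does not work in general. The paper avoids this by first replacing each $\p_{i,n}$ with a rescaled copy satisfying $\|\p_{i,n}(u)\|\le 1$ for all $u$, which is legitimate because rescaling a map does not change $d_{uv}$; with that normalisation the perturbation is $O(\eps_n)$ and the limit follows. Adding this one normalisation step (or, equivalently, choosing $\eps_n$ to shrink faster than $1/\max_{i,u}\|\p_{i,n}(u)\|$) would make your argument complete.
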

\begin{proof}
For each $1\leq i\leq k$,
let $\p_{i,n} : V_i\to \RR^d$ be a sequence of embeddings such that
$(G[V_i],\p_{i,n})\to (G[V_i],\q_i)$.
We may assume without loss of generality that $\|\p_{i,n}(u)\|\leq 1$
for all $1\leq i\leq k$ and $u\in V_i$
(since re-scaling of a map $\p$ does not affect the values of $d_{uv}(\p)$).

Define $\p_n: V\to \RR^d$ by
\[
    \p_n(u)= n x_i + \p_{i,n}(u)
\]
for all $1\leq i\leq k$ and $u\in V_i$.
We will show that $(G,\p_n)\to (G,\q)$. Let $\{u,v\}\in E$. If $u,v\in V_i$ for some $1\leq i \leq k$, then
\[
    d_{uv}(\p_n)=\frac{\p_n(u)-\p_n(v)}{\|\p_n(u)-\p_n(v)\|}=  \frac{\p_{i,n}(u)-\p_{i,n}(v)}{\|\p_{i,n}(u)-\p_{i,n}(v)\|} \to \q_i(u,\{u,v\})=\q(u,\{u,v\}),
\]
as required. If $u\in V_i$ and $v\in V_j$ for some $i\neq j$, then, since $\|\p_{i,n}(u)\|\leq 1$ and $\|\p_{j,n}(v)\|\leq 1$, we have
\[
\lim_{n\to\infty }\p_{n}(u)/n= x_i
\]
and
\[
\lim_{n\to\infty }\p_{n}(v)/n = x_j.
\]
Therefore,
\[
 d_{uv}(\p_n)=\frac{\p_n(u)-\p_n(v)}{\|\p_n(u)-\p_n(v)\|}=\frac{\p_{n}(u)/n-\p_{n}(v)/n}{\|\p_{n}(u)/n-\p_{n}(v)/n\|}\to\frac{x_i-x_j}{\|x_i-x_j\|}.
  \qedhere
 \]
\end{proof}

The following lemma is a simple generalisation of an argument
by Tay~\cite{Tay93}*{Theorem~3.4} (see also~\cite{SS02}*{Theorem~8}).

\begin{lemma}\label{lemma:combing}
  Let $G=(V,E)$ be a graph, and let $E=E_1\cup \cdots\cup E_k$ be a colouring of its edge set.
  Let $y_1,\ldots, y_k\in \SS^{d-1}$.
  If every $U\subseteq V$ of size at least $2$ has a monochromatic cut,
  then there exists a $d$-dimensional limit framework $(G,\q)$ such that,
  for every $i\in [k]$,
  $\q(u,\{u,v\})\in \{y_i,-y_i\}$ for all $\{u,v\}\in E_i$.
\end{lemma}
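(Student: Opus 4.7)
The approach is induction on $|V|$. The base case $|V|=1$ is immediate, since $G$ has no edges and $\q$ may be taken to be the empty function, realized as a limit of any constant sequence of embeddings.

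For the inductive step $|V|\geq 2$, apply the monochromatic-cut hypothesis to $V$ itself to obtain a partition $V=V'\cup V''$ with $V',V''\neq\varnothing$ and $E(V',V'')\subseteq E_i$ for some $i\in[k]$. A key observation is that the monochromatic-cut property is inherited by $G[V']$ and $G[V'']$: for any $U\subseteq V'$ with $|U|\geq 2$, a monochromatic cut $U=U'\cup U''$ of $U$ in $G$ remains a monochromatic cut in $G[V']$, since $E_{G[V']}(U',U'')=E_G(U',U'')$ because $U',U''\subseteq V'$. The induction hypothesis, applied with the same unit vectors $y_1,\ldots,y_k$, then produces $d$-dimensional limit frameworks $(G[V'],\q')$ and $(G[V''],\q'')$ in which every edge of color $j$ has direction in $\{y_j,-y_j\}$.

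To glue these together, choose distinct $x_1,x_2\in\RR^d$ with $(x_1-x_2)/\|x_1-x_2\|=y_i$ (e.g., $x_1=y_i$ and $x_2=\mathbf{0}$), and invoke \cref{lemma:constructing_limit_frameworks} with the two-part colouring $V=V'\cup V''$, the points $x_1,x_2$, and the limit frameworks $\q',\q''$. The resulting $(G,\q)$ satisfies the conclusion: edges inside $V'$ or $V''$ inherit their directions from $\q'$ or $\q''$ and hence respect the color condition by induction, whereas every cross-edge $\{u,v\}\in E(V',V'')\subseteq E_i$ has $\q(u,\{u,v\})=y_i$ and $\q(v,\{u,v\})=-y_i$, both in $\{y_i,-y_i\}$. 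The main obstacle is really only the bookkeeping for the inherited monochromatic-cut property; once that is in hand, \cref{lemma:constructing_limit_frameworks} supplies the approximating sequence of embeddings essentially for free, and the induction closes cleanly.
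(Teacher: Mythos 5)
Your proof is correct and follows essentially the same route as the paper: induction on $|V|$, splitting $V$ along a monochromatic cut, applying the inductive hypothesis to the two parts, and gluing via Lemma~\ref{lemma:constructing_limit_frameworks} with $x_1 = y_i$, $x_2 = \mathbf{0}$. The only addition is your explicit verification that the monochromatic-cut hypothesis is inherited by $G[V']$ and $G[V'']$, which the paper leaves implicit; this is a small but welcome clarification.
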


\begin{proof}
We argue by induction on $|V|$. If $|V|=1$, we are done. Otherwise, there exist some $i\in[k]$ and a partition $V=A\cup B$ such that all the edges in $G$ between $A$ and $B$ belong to $E_i$.
By the induction hypothesis, there exist limit frameworks $(G[A],\q_A)$ and $(G[B],\q_B)$ such that for every $j\in [k]$ and $\{u,v\}\in E_j$,  $\q_A(u,\{u,v\})\in\{y_j,-y_j\}$ 
 if   $u,v\in A$, and $\q_B(u,\{u,v\})\in\{y_j,-y_j\}$ 
 if   $u,v\in B$. Let $x_A=y_i$ and $x_B=0$. Note that
 $(x_A-x_B)/\|x_A-x_B\|= y_i$.
 Therefore, by \cref{lemma:constructing_limit_frameworks},
 there is a $d$-dimensional limit framework $(G,\q)$ satisfying
 \[
 \q(u,\{u,v\})=
 \begin{cases}
     y_i & \text{ if $u\in A$ and $v\in B$,}\\
       -y_i & \text{ if $u\in B$ and $v\in A$,}\\
     \q_A(u,\{u,v\}) & \text{ if } u,v\in A,\\
     \q_B(u,\{u,v\}) & \text{ if } u,v\in B,
 \end{cases}
 \]
 for all $\{u,v\}\in E$. Note that for every $j\in[k]$ and $\{u,v\}\in E_j$, we obtain $\q(u,\{u,v\})\in \{y_j,-y_j\}$, as desired.
\end{proof}

\subsection{Proof of \cref{thm:rigid:partition:quant}}\label{sec:rigid:partitions:pf}

For the proof of \cref{thm:rigid:partition:quant}
we will need the following very simple lemma from~\cite{LNPR23+}.

\begin{lemma}[{\cite[Lemma~3.1]{LNPR23+}}]
\label{lemma:block_diagonal}
Let $M\in\RR^{n\times n}$ be a block diagonal symmetric matrix with blocks $M_1,\ldots,M_k$, where $M_i\in \RR^{n_i\times n_i}$ for every $1\leq i\leq k$. Let $1\leq m\leq n$ and $r_1,\ldots, r_k$ be such that $\sum_{i=1}^k r_i=m+k-1$. Then,
\[
\lambda_m(M)\geq \min \{ \lambda_{r_i}(M_i) : \, 1\leq i\leq k\}.
\]
\end{lemma}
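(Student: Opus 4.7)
The plan is to use the fact that, for a block-diagonal symmetric matrix, the spectrum (as a multiset) of $M$ is exactly the disjoint union of the spectra of the blocks $M_1,\ldots,M_k$. This turns the inequality into a counting problem: if we set $t := \min_{1\le i\le k}\lambda_{r_i}(M_i)$, proving $\lambda_m(M)\ge t$ amounts to showing that fewer than $m$ eigenvalues of $M$ are strictly less than $t$.

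The key observation is the following per-block count. Fix $i\in[k]$. By the definition of $t$, we have $\lambda_{r_i}(M_i)\ge t$. Since the eigenvalues of $M_i$ are ordered $\lambda_1(M_i)\le\lambda_2(M_i)\le\cdots\le\lambda_{n_i}(M_i)$, the index $r_i$ cannot correspond to an eigenvalue strictly below $t$, and neither can any later index. Hence the number of eigenvalues of $M_i$ that are strictly less than $t$ is at most $r_i-1$.

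Summing these per-block bounds, the total number of eigenvalues of $M$ that are strictly less than $t$ is at most
\[
\sum_{i=1}^{k}(r_i-1)=\Big(\sum_{i=1}^{k}r_i\Big)-k=(m+k-1)-k=m-1.
\]
Therefore, in the sorted list of eigenvalues of $M$, the first $m-1$ indices already account for all eigenvalues below $t$, so $\lambda_m(M)\ge t=\min_{1\le i\le k}\lambda_{r_i}(M_i)$, as desired. I do not anticipate any obstacle; the only mild care needed is the strict-vs-nonstrict inequality bookkeeping, which I have handled by tracking ``strictly less than $t$'' throughout.
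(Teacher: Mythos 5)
Your proof is correct. Since $M$ is block diagonal and symmetric, its spectrum (as a multiset) is exactly the union of the spectra of its blocks, and your eigenvalue-counting argument works cleanly: setting $t=\min_i\lambda_{r_i}(M_i)$, each block $M_i$ contributes at most $r_i-1$ eigenvalues strictly below $t$, so $M$ has at most $\sum_i(r_i-1)=m-1$ eigenvalues strictly below $t$, forcing $\lambda_m(M)\ge t$. The paper does not reprove this lemma (it cites \cite{LNPR23+}), so there is no in-paper proof to compare against, but your argument is the standard and essentially unique short proof for a statement of this type; the alternative via Courant--Fischer would be more machinery for no gain. One small point worth being explicit about, since the paper invokes the lemma with a degenerate $0\times 0$ block (with the convention $\lambda_1$ of a $0\times 0$ matrix equals $\infty$): your count still goes through there, since such a block contributes $0\le r_i-1$ eigenvalues below any $t$, so the bound $\sum_i(r_i-1)=m-1$ is unaffected.
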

For convenience, given a ``$0\times 0$" matrix $M$, we define $\lambda_1(M)=\infty$.
Note then that \cref{lemma:block_diagonal} applies even if we allow values $n_i=0$ and $r_i=1$
for one or more $1\leq i\leq k$.

\begin{proof}[Proof of \cref{thm:rigid:partition:quant}]

Without loss of generality, we assume that $\{E_{ij}\}_{1\leq i<j\leq d+1}$ is a colouring of $E$.
Otherwise, we can apply the arguments below to the subgraph $G'=(V,\cup_{1\leq i<j\leq d+1} E_{ij})$
(and then use the fact that $a_d(G)\geq a_d(G')$, see e.g.~\cite{LNPR22+}*{Theorem~2.3}).

Recall that, for $1\leq i<j\leq d+1$, we define $G_{ij}=(V_{i}\cup V_{j},E_{ij})$.
For convenience, for $i>j$, we define $G_{ij}=G_{ji}$.
 Let $x_1,\ldots,x_{d+1}\in\RR^d$ be the vertices of a regular simplex.
 For $i\neq j$, let $y_{ij}=(x_i-x_j)/\|x_i-x_j\|$. 

Let $1\leq i\leq d+1$. By \cref{lemma:combing}
(using the fact that any $U\subseteq V_i$ of size at least $2$ has a monochromatic cut),
there exists a $d$-dimensional limit framework $(G[V_i],\q_i)$,
satisfying $\q_i(u,\{u,v\})\in \{y_{ij},-y_{ij}\}$ for all $j\neq i$ and $\{u,v\}\in E_{ij}$.
Therefore, by \cref{lemma:constructing_limit_frameworks},
there exists a $d$-dimensional limit framework $(G,\q)$ satisfying,
for all $i\neq j$ and $\{u,v\}\in E_{ij}$, $\q(u,\{u,v\})\in \{y_{ij},-y_{ij}\}$
if $u,v\in V_i$ or $u,v\in V_j$, and $\q(u,\{u,v\})=y_{ij}$ if $u\in V_i$ and $v\in V_j$.

Let $\eta:\{(u,e):\, e\in E, u\in e\}\to\{1,-1\}$ be defined by
\[
    \eta(u,\{u,v\})= \begin{cases} 
    1 & \text{if } \q(u,\{u,v\})=y_{ij},\\
    -1 & \text{if } \q(u,\{u,v\})=-y_{ij}
    \end{cases}
\]
for all $\{u,v\}\in E_{ij}$ such that $u\in V_i$. Note that, if $u\in V_i$ and $v\in V_j$ for $i\neq j$, then $\eta(u,\{u,v\})=\eta(v,\{u,v\})=1$. Moreover, since $\q$ is a $d$-dimensional limit framework, we have $\q(u,\{u,v\})=-\q(v,\{u,v\})$ for all $\{u,v\}\in E$. Therefore, we have $\eta(u,\{u,v\})=-\eta(v,\{u,v\})$  whenever $u,v\in V_i$ for some $1\leq i\leq d+1$. 

It is easy to check that if $e=\{u,v\}\in E_{ij}$ and $e'=\{u,w\}\in E_{ik}$ for $i\neq j,k$, then 
\[
    \q(u,e)\cdot \q(u,e') = (y_{ij}\cdot y_{ik}) \eta(u,e)\eta(u,e').
\]
Note that, if $j\neq k$,  $y_{ij}\cdot y_{ik}=1/2$, and if $j=k$, $y_{ij}\cdot y_{ik}=1$.
Therefore, by \cref{lemma:generalized_lower_stiffness}, we obtain
\[
    \sL^{-}(G,\q)(e,e')=\begin{cases}
    2 & \text{ if } e=e',\\
    \eta(u,e)\cdot \eta(u,e') & \text{ if } e\cap e'=\{u\},\, e,e'\in E_{ij} \text{ for $i\neq j$},\\
    \frac{1}{2} \eta(u,e)\cdot \eta(u,e') & \text{ if } e\cap e'=\{u\},\, e\in E_{ij}, \, e'\in E_{ik} \text{ for pairwise distinct } i,j,k,
    \\
    0 & \text{ otherwise.}
    \end{cases}
\]
We can write $\sL^{-}(G,\q)$ as a sum
\[
\sL^{-}(G,\q)= \frac{1}{2} M+\frac{1}{2} T,
\]
where $M$ is the matrix defined by
\[
    M(e,e')=\begin{cases}
    2 & \text{ if } e=e',\\
    \eta(u,e)\cdot \eta(u,e') & \text{ if } e\cap e'=\{u\},\\
    0 & \text{ otherwise,}
    \end{cases}
\]
and $T$ is a block diagonal matrix with blocks $T_{ij}$ for $1\leq i<j\leq d+1$, where each block $T_{ij}$ is supported on the rows and columns corresponding to $E_{ij}$, and is defined by
\[
T_{ij}(e,e')=\begin{cases}
    2 & \text{ if } e=e',\\
    \eta(u,e)\cdot \eta(u,e') & \text{ if } e\cap e'=\{u\},\\
    0 & \text{ otherwise,}
    \end{cases}
\]
Note that $M=\sL^{-}(G,\eta)$, and in particular it is a positive semi-definite matrix. Thus, by Weyl's inequality
(see, e.g.,~\cite{BH}*{Theorem~2.8.1(iii)}),
\begin{equation}\label{eq:weyl1}
\lambda_i(\sL^{-}(G,\q))\geq 
 \frac{1}{2}\lambda_i(T)  \, \text{ for all $1\leq i \leq |E|$}.
\end{equation}
Let $1\leq i<j\leq d+1$. Define a signing $\eta_{ij}: \{(u,e):\, e\in E_{ij},\, u\in e\} \to \{1,-1\}$ by
\[
\eta_{ij}(u,\{u,v\})=\begin{cases}
    \eta(u,\{u,v\}) & \text{ if } u,v\in V_i,\\
    -\eta(u,\{u,v\}) & \text{ if } u,v\in V_j,\\
    1 & \text{ if } u\in V_i, v\in V_j,\\
    -1 & \text{ if } i\in V_j, v\in V_i.
\end{cases}
\]
Note that $\eta_{ij}(v,\{u,v\})=-\eta_{ij}(u,\{u,v\})$, so $\eta_{ij}$ is an orientation of $G_{ij}$, and therefore $\sL(G_{ij},\eta_{ij})$ is actually the Laplacian matrix $\sL(G_{ij})$.
Moreover, for any $\{u,v\},\{u,w\}\in E_{ij}$, 
\[
\eta(u,\{u,v\})\cdot \eta(u,\{u,w\})
=
\eta_{ij}(u,\{u,v\})\cdot \eta_{ij}(u,\{u,w\}),
\]
and therefore $T_{ij}=\sL^{-}(G_{ij},\eta_{ij})$. Let $r_{ij}=|E_{ij}|-|V_{i}\cup V_j|+2$. Since $G_{ij}$ is connected, we have $|E_{ij}|\geq |V_i\cup V_j|-1$.
So, by~\eqref{eq:eigen}, we obtain
\[
    \lambda_{r_{ij}}(T_{ij})=\lambda_{r_{ij}}(\sL^{-}(G_{ij},\eta_{ij}))=\lambda_2(\sL(G_{ij},\eta_{ij}))=\lambda_2(\sL(G_{ij}))=\na(G_{ij}).
\]
Let $m=|E|-d|V|+\binom{d+1}{2}+1$. Note that
\begin{align*}
\sum_{1\leq i<j\leq d+1} r_{ij} &= \sum_{1\leq i<j\leq d+1}|E_{ij}| -d\sum_{i=1}^{d+1}|V_i| +2\binom{d+1}{2} 
\\
&= |E|-d|V|+2\binom{d+1}{2} = m + \binom{d+1}{2}-1.
\end{align*}
So, by \cref{lemma:block_diagonal}, 
\[
\lambda_m(T) 
\geq \min\left\{ \lambda_{r_{ij}}(T_{ij}) :\, 1\leq i<j\leq d+1\right\}
= \min\left\{ \na(G_{ij}) :\, 1\leq i<j\leq d+1\right\}.
\]
By \cref{lemma:limit_framework_ad}, \eqref{eq:eigen} and~\eqref{eq:weyl1}, we obtain
\[
    \na_d(G)\geq \lambda_{\binom{d+1}{2}+1}(\sL(G,\q))= \lambda_m(\sL^{-}(G,\q))
    \geq \frac{1}{2}\lambda_m(T)\geq \min\left\{ \frac{\na(G_{ij})}{2} :\, 1\leq i<j\leq d+1\right\},
\]
as wanted.
\end{proof}

\subsection{Special rigid partitions}\label{sec:rigid:partitions:special}
We proceed to prove \cref{cor:cdsp:rigid,cor:strong:rigid:partition,cor:strong:bipartite:rigid:partition}.
\cref{cor:cdsp:rigid} follows from the next proposition together with \cref{thm:rigid:partition}.

\begin{proposition}
  A graph that admits a CDS partition of size $\binom{d+1}{2}$ admits a $d$-rigid partition.
\end{proposition}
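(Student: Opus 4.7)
The idea is to construct a $d$-rigid partition explicitly from the given CDS partition. I would relabel the $\binom{d+1}{2}$ disjoint CDSs as $W_{ij}$ for $1\le i<j\le d+1$, in bijection with the unordered index pairs from $[d+1]$.

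For the vertex colouring, I would take $V_k:=\bigcup_{l>k}W_{kl}$ for $k=1,\dots,d$ and $V_{d+1}:=\es$, placing every vertex of $W_{ij}$ in the smaller-indexed class $V_i$. Since each CDS is non-empty, every $V_k$ with $k\le d$ is non-empty, so this is a valid colouring with exactly one empty part (which the definition allows). For the edge partial colouring, I would assign each edge $e$ as follows: if both endpoints of $e$ lie in the same CDS $W_{ij}$, put $e\in E_{ij}$; if $e$ is an intra-$V_i$ edge between two different CDSs $W_{ij}$ and $W_{ij'}$, put $e\in E_{i,\max(j,j')}$; and if $e$ is an inter-class edge between $W_{ij}$ and $W_{i'j'}$ with $i\ne i'$, put $e\in E_{\min(i,i'),\max(i,i')}$. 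This yields a valid partial colouring satisfying $E_{ij}\subseteq E(V_i\cup V_j)$ for each pair.

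To verify connectivity of the $G_{ij}$'s: for the pair $\{i,d+1\}$, the vertex set is $V_i$, and the ``max'' tie-breaking places into $E_{i,d+1}$ every intra-$V_i$ edge from $W_{il}$ (with $l<d+1$) to $W_{i,d+1}$, so combined with $G[W_{i,d+1}]\subseteq E_{i,d+1}$ and the domination property of the CDS $W_{i,d+1}$, we get that $G_{i,d+1}$ is connected. For $i<j\le d$, every vertex of $V_i\cup V_j$ can be traced to $W_{ij}$ within $E_{ij}$: a vertex in $V_j$ attaches through an inter-class edge given by the domination of $W_{ij}$; a vertex in $V_i\cap W_{il}$ with $l<j$ attaches through an intra-class edge that lands in $E_{ij}$ by the tie-breaking; and a vertex in $V_i\cap W_{il}$ with $l>j$ first jumps to $V_j$ via an inter-class edge and then reduces to the previous case. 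For the monochromatic cut of $U\subseteq V_k$ with $|U|\ge2$, let $l_1$ be the \emph{largest} $l>k$ with $U\cap W_{kl_1}\ne\es$, and take the cut $U=U'\cup U''$ with $U'=U\cap W_{kl_1}$, or $U'$ an arbitrary singleton of $U$ if $U\subseteq W_{kl_1}$. Intra-$W_{kl_1}$ crossings lie in $E_{kl_1}$, and by the maximality of $l_1$ and the ``max'' rule, every crossing between $W_{kl_1}$ and a $W_{kl}$ with $l<l_1$ also lies in $E_{k,\max(l_1,l)}=E_{kl_1}$, so the cut is monochromatic.

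The only real design choice, and the crux of the proof, is the ``max'' tie-breaking rule for intra-class edges. It is essential that it simultaneously (i) pushes enough edges into $E_{i,d+1}$ --- where no inter-class help is available since $V_{d+1}=\es$ --- to keep $G_{i,d+1}$ connected, and (ii) lines up the crossings out of the top CDS layer $U\cap W_{kl_1}$ to be uniformly coloured $E_{kl_1}$. The complementary ``min'' rule would, for instance, leave $E_{i,d+1}$ without any intra-$V_i$ edge and break the connectivity of $G_{i,d+1}$. With the max rule fixed, there is no further obstacle: the three $d$-rigid partition conditions reduce to the direct checks above.
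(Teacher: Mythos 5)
Your proof is correct, and the high-level skeleton is shared with the paper's (relabel the $\binom{d+1}{2}$ CDSs by unordered pairs from $[d+1]$ and assign each CDS to one of its two indices), but the realization is genuinely different. The paper assigns $A_{ij}$ to $V_j$ — the \emph{larger}-indexed class — with one ad hoc exception $A_{13}\to V_1$ so that no $V_k$ is empty, and uses the \emph{partial} edge colouring $E_{ij}=E(V_i,V_j)\cup E(A_{ij})$: intra-class edges between two different CDSs are simply left uncoloured. That choice makes the monochromatic-cut check trivial (for $u\in U\cap A_{ij}$ the singleton cut $\{u\}\cup(U\setminus\{u\})$ works, since any coloured edge leaving $u$ inside $V_i$ must lie inside $A_{ij}$), at the cost of a slightly asymmetric vertex colouring. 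You instead assign every $W_{ij}$ to the \emph{smaller}-indexed class, accept $V_{d+1}=\varnothing$, and make the edge colouring \emph{total} via the ``max'' tie-break for intra-class cross-CDS edges. That buys a clean, exception-free construction and makes the connectivity of $G_{i,d+1}$ work out despite the empty part, but the monochromatic-cut argument becomes nontrivial and needs the maximality of $l_1$ (equivalently, a singleton cut also works, but only if the singleton is chosen from the highest-indexed $W_{kl}$ meeting $U$). Both designs satisfy the three conditions; yours is a valid alternative that trades a trivial cut argument for a uniform colouring rule, and it also handles $d=1$ without a separate case.
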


\begin{proof}
  If $d=1$ then the claim is trivial; we therefore assume that $d\ge 2$.
  Let $\{A_{ij}:\, 1\leq i<j\leq d+1\}$ be a partition of $V$ into $\binom{d+1}{2}$ connected dominating sets.
  Let $V_1=A_{13}$, $V_2=A_{12}$, $V_3=A_{23}$, and $V_j=\bigcup_{i<j} A_{ij}$ for $4\leq j\leq d+1$.
  For $1\leq i<j\leq d+1$, let $E_{ij}= E(V_i,V_j)\cup E(A_{ij})$, and let $G_{ij}=(V_i\cup V_j,E_{ij})$.
  For convenience, we define $E_{ji}=E_{ij}$ and $A_{ji}=A_{ij}$.

  First, we show that the graphs $G_{ij}$ are connected. 
  Let $1\leq i<j\leq d+1$.
  Assume that $A_{ij}\subseteq V_j$
  (this is the case unless $i=1$ and $j=3$, in which case the proof follows in essentially the same way).
  Since $A_{ij}$ is a dominating set, every vertex in $V_i$ is adjacent in $G_{ij}$ to a vertex in $A_{ij}$. Similarly, since $V_i$ is a dominating set (as it contains a dominating set), every vertex in $V_j$ is adjacent in $G_{ij}$ to a vertex in $V_i$. Therefore, every vertex in $V_i\cup V_j$ is connected in $G_{ij}$ to a vertex in $A_{ij}$ (by a path of length at most two). Since $A_{ij}$ is connected in $G_{ij}$, this implies that $G_{ij}$ is connected.

  We proceed to show that for every $1\leq i\leq d+1$,
  every $U\subseteq V_i$ of size at least $2$ has a monochromatic cut
  (that is, a partition into $U',U''$ such that $\hat{E}\cap E(U',U'')\subseteq E_{ij}$ for some $j\neq i$,
  where $\hat{E}=\bigcup_{1\leq i'<j'\leq d+1} E_{i'j'}$).
  But this is trivially true: Let $u\in U$. Then $u\in A_{ij}$ for some $j\neq i$. Let $U'=\{u\}$ and $U''=U\smallsetminus \{u\}$. Then, $\hat{E}\cap E(U',U'')\subseteq E(A_{ij})\subseteq E_{ij}$.
  Therefore,  $\left(\{V_i\}_{i=1}^{d+1}, \{E_{ij}\}_{1\leq i<j\leq d+1}\right)$ is a $d$-rigid partition of $G$.
\end{proof}

\Cref{cor:strong:rigid:partition} follows from the next two propositions
together with \cref{thm:rigid:partition}.
Recall that, given a colouring $V_1,\dots,V_{d+1}$ of $V$
and a partial colouring $\{E_{ij}\}_{1\le i<j\le d+1}$ of $E$, 
we denote by $G_{ij}$ the subgraph $(V_i\cup V_j,E_{ij})$ of $G$
and set $\hat{E}=\bigcup_{1\le i<j\le d+1} E_{ij}$.

\begin{proposition}
  A graph that admits a type I strong $d$-rigid partition admits a $d$-rigid partition.
\end{proposition}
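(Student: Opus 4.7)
The plan is to construct a $d$-rigid partition directly from a given type I strong $d$-rigid partition $V_1,\dots,V_d$ by appending an empty $(d+1)$st part $V_{d+1}=\varnothing$ (which is permitted in a $d$-rigid partition), and supplying a suitable partial edge colouring on the resulting $(d+1)$-part vertex partition.

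The natural edge colouring to try is $E_{ij} := E(V_i,V_j)$ for $1\le i<j\le d$, together with $E_{i,d+1}:=E(V_i)$ for $1\le i\le d$. These are pairwise disjoint subsets of $E$ (edges between distinct parts never coincide with edges inside a part), every edge in $E_{ij}$ is contained in $V_i\cup V_j$, and the associated graphs $G_{ij}=(V_i\cup V_j, E_{ij})$ are exactly $G[V_i,V_j]$ in all cases: for $1\le i<j\le d$ directly, and for $j=d+1$ the graph $G_{i,d+1}$ coincides with $G[V_i]=G[V_i,V_i]$. The type I hypothesis gives connectivity of $G[V_i,V_j]$ for \emph{all} $1\le i\le j\le d$, so the ``diagonal'' case $j=d+1$ is precisely what the $i=j$ clause of the type I definition supplies.

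It then remains to verify the monochromatic cut condition. Since $V_{d+1}=\varnothing$, we only need to consider $U\subseteq V_i$ with $|U|\ge 2$ for $1\le i\le d$. I would just pick any nontrivial partition $U=U'\cup U''$ (e.g.\ $U'=\{u\}$ for some $u\in U$, $U''=U\setminus\{u\}$). Every edge of $E(U',U'')$ lies inside $V_i$, and among the $E_{i'j'}$ the only class that contains any edges of $E(V_i)$ is $E_{i,d+1}$, so $\hat{E}\cap E(U',U'')\subseteq E_{i,d+1}$, as required.

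There is no real obstacle here: the argument is a bookkeeping translation between the two formalisms. The only point worth flagging is the role played by the ``diagonal'' part $i=j$ of the type I definition, which at first glance looks like a redundant connectivity requirement (since, \emph{a priori}, the definition of a $d$-rigid partition asks only for connectivity of $G_{ij}$ with $i<j$); in our construction it is precisely the hypothesis that drives the connectivity of $G_{i,d+1}$ after the empty part is adjoined.
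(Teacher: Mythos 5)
Your proposal is correct and follows essentially the same route as the paper: appending an empty $(d+1)$st part, setting $E_{ij}=E(V_i,V_j)$ for $i<j\le d$ and $E_{i,d+1}=E(V_i)$, and observing that connectivity of $G_{i,d+1}=G[V_i]$ is exactly the diagonal clause of the type I definition while the monochromatic-cut condition holds trivially because $\hat{E}\cap E(V_i)\subseteq E_{i,d+1}$. The only cosmetic difference is that you spell out the cut $U'=\{u\}$, $U''=U\setminus\{u\}$, whereas the paper simply notes $E(U)\subseteq E_{i,d+1}$.
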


\begin{proof}
  Let $V_1,\dots,V_d$ be a type I strong $d$-rigid partition of a graph $G=(V,E)$.
  Let $V_{d+1}=\es$, so $V_1,\dots,V_{d+1}$ is a colouring of $V$.
  For $1\le i<j\le d$ let $E_{ij}=E(V_i,V_j)$,
  and for $1\le i\le d$ and $j=d+1$ let $E_{ij}=E(V_i)$.
  Thus, $\{E_{ij}\}_{1\le i<j\le d+1}$ is a partial colouring of $E$.
  Moreover,
  for $1\le i<j\le d$, $G_{ij}$ is connected since $G_{ij}=G[V_i,V_j]$,
  and for $1\le i\le d$ and $j=d+1$, $G_{ij}$ is connected since $G_{ij}=G[V_i]$.
  Finally, let $1\le i\le d+1$ and let $U\subseteq V_i$ with $|U|\ge 2$
  (so $i\le d$).
  We have to show that $U$ has a monochromatic cut.
  However, that follows directly from the fact that $E(U)\subseteq E_{i,d+1}$.
\end{proof}

\begin{proposition}
  A graph that admits a type II strong $d$-rigid partition admits a $d$-rigid partition.
\end{proposition}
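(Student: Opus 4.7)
The plan is to build the $d$-rigid partition directly out of the given type II strong $d$-rigid partition, using the simplest possible edge colouring. Given the vertex partition $V_1,\dots,V_{d+1}$, I would declare $E_{ij} := E(V_i,V_j)$ for all $1\le i<j\le d+1$. Three things then need to be checked: that this is a legitimate partial edge colouring with $e\subseteq V_i\cup V_j$ whenever $e\in E_{ij}$; that each $G_{ij}=(V_i\cup V_j, E_{ij})$ is connected; and that every $U\subseteq V_i$ with $|U|\ge 2$ admits a monochromatic cut.

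The first two items are essentially by construction. Since the $V_i$'s are pairwise disjoint, the sets $E(V_i,V_j)$ are pairwise disjoint, so $\{E_{ij}\}$ is a partial colouring, and obviously every edge in $E(V_i,V_j)$ is contained in $V_i\cup V_j$. Connectedness of $G_{ij}$ is exactly the hypothesis that $G[V_i,V_j]$ is connected, which is the defining condition of a type II strong $d$-rigid partition.

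The monochromatic-cut condition is where one might expect difficulty, but here it becomes vacuous. Any edge of $G$ that lies entirely inside some $V_i$ belongs to no $E_{jk}$, because each $E_{jk}$ consists of edges with one endpoint in $V_j$ and the other in $V_k$, with $j\ne k$. Consequently, writing $\hat E=\bigcup_{j<k} E_{jk}$, we have $\hat E\cap E(V_i)=\es$ for every $i$. So for any $U\subseteq V_i$ with $|U|\ge 2$, pick any vertex $u\in U$, set $U'=\{u\}$ and $U''=U\sm\{u\}$: then $\hat E\cap E(U',U'')=\es$, which is trivially contained in $E_{12}$ (or any other colour class). The main ``obstacle,'' therefore, is just making sure one correctly tracks the definitions; there is no combinatorial difficulty, since type II partitions, unlike type I, impose no connectivity requirement inside the parts, and correspondingly the $d$-rigid partition framework places no edges from the inside of the parts into $\hat E$.
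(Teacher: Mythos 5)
Your proof is correct and takes exactly the same approach as the paper: define $E_{ij}=E(V_i,V_j)$, note that connectedness of each $G_{ij}$ is the defining hypothesis, and observe that $\hat E\cap E(V_i)=\es$ makes the monochromatic-cut condition vacuous. The only difference is that you spell out the last step in slightly more detail.
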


\begin{proof}
  Let $V_1,\dots,V_{d+1}$ be a type II strong $d$-rigid partition of a graph $G=(V,E)$.
  So $V_1,\dots,V_{d+1}$ is a colouring of $V$.
  For $1\le i<j\le d+1$ let $E_{ij}=E(V_i,V_j)$.
  Thus, $\{E_{ij}\}_{1\le i<j\le d+1}$ is a partial colouring of $E$.
  Moreover,
  for $1\le i<j\le d+1$, $G_{ij}$ is connected since $G_{ij}=G[V_i,V_j]$.
  Finally, let $1\le i\le d+1$ and let $U\subseteq V_i$ with $|U|\ge 2$.
  We have to show that $U$ has a monochromatic cut.
  However, that follows directly from the fact that
  $\hat{E}\cap E(V_i)=\es$ for every $1\le i\le d+1$.
\end{proof}

\Cref{cor:strong:bipartite:rigid:partition} follows
from the next proposition together with \cref{thm:rigid:partition}.

\begin{proposition}
  A graph that admits a strong bipartite $d$-rigid partition admits a $d$-rigid partition.
\end{proposition}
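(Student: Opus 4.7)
The plan is to construct a $d$-rigid partition by retaining the vertex partition $V_1,\dots,V_{d+1}$ and defining the edge partial colouring $\{E_{ij}\}_{1\le i<j\le d+1}$ to consist of all crossing edges $E(V_i,V_j)$ together with a single carefully chosen bridging edge drawn from $F_i\cup F_j$, where $F_i\subseteq G[A_i,B_i]$ is the forest of size $s_i$ supplied by the definition of a strong bipartite $d$-rigid partition. All remaining edges of $G$ (in particular almost all intra-part edges) are left outside $\hat E=\bigcup_{i<j}E_{ij}$.

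The selection of bridging edges is controlled by an orientation of $K_{d+1}$. I would first establish that there exists an orientation of $K_{d+1}$ in which each vertex $i$ has out-degree at most $s_i$. By a standard Hall/Gale--Hoffman feasibility argument, such an orientation exists if and only if $\sum_{i\in S}s_i\ge\binom{|S|}{2}$ for every $S\subseteq[d+1]$; since $(s_i)$ is non-decreasing, the extremal case $S=\{1,\dots,k\}$ reduces this to exactly the hypothesis $\sum_{i=1}^k s_i\ge\binom{k}{2}$. For each edge $\{i,j\}$ oriented $i\to j$ I would then pick a distinct edge $f_{ij}\in F_i$ (possible because vertex $i$ is used at most $s_i=|F_i|$ times) and place $f_{ij}\in E_{ij}$.

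Two conditions of \cref{thm:rigid:partition} must then be verified. For connectedness of $G_{ij}=(V_i\cup V_j,E_{ij})$: the crossing edges $E(A_i,B_j)\cup E(A_j,B_i)$ span two connected subgraphs on $A_i\cup B_j$ and $A_j\cup B_i$ by the strong bipartite hypothesis, and the bridging edge $f_{ij}$, which lies between the two $G$-parts inside some $V_k$ with $k\in\{i,j\}$, joins these two pieces. For the monochromatic cut condition: since the only $\hat E$-edges inside $V_i$ come from $F_i$, they form a subforest $F_i'\subseteq F_i$. Given $U\subseteq V_i$ with $|U|\ge 2$, either the induced forest $F_i'[U]$ is edgeless (and any cut is vacuously monochromatic) or it contains a vertex $u$ of degree at most $1$; in either case the partition $U'=\{u\}$, $U''=U\setminus\{u\}$ satisfies $|\hat E\cap E(U',U'')|\le 1$, which is trivially monochromatic.

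The main technical step is the Hall-type feasibility argument yielding the bounded-out-degree orientation of $K_{d+1}$; this is precisely where the sequence hypothesis is used. A secondary subtlety is that the connectedness argument implicitly assumes that all four of $A_i,B_i,A_j,B_j$ are nonempty, so that both of the pieces $A_i\cup B_j$ and $A_j\cup B_i$ are actually connected and the bridging edge genuinely crosses between them. Degenerate configurations with empty parts are however tightly constrained by the standing assumption that every $G[A_i,B_j]$ be connected (forcing the complementary part to have at most one vertex), and these cases can be handled by direct inspection.
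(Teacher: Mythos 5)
Your proof is correct and follows essentially the same strategy as the paper's: retain the vertex partition, set $E_{ij}=E(V_i,V_j)\cup\{f_{ij}\}$ with one bridging edge per pair drawn from the forests, and select these bridging edges via an orientation of $K_{d+1}$ whose out-degrees respect the $s_i$. The two minor points of divergence are both sound. First, where the paper normalizes $\sum s_i=\binom{d+1}{2}$ and invokes Landau's theorem on tournament score sequences, you instead ask only for out-degrees \emph{at most} $s_i$ and produce the orientation by a defect-Hall argument, observing that monotonicity of $(s_i)$ reduces the subset condition $\sum_{i\in S}s_i\ge\binom{|S|}{2}$ to the initial-segment hypothesis; this is equivalent in substance and arguably a touch cleaner. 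Second, for the monochromatic cut you peel off a leaf of the induced subforest $F_i'[U]$, whereas the paper cuts along the component of one endpoint of a spanned forest edge in $F_i\setminus e_i^j$; both produce a cut crossed by at most one edge of $\hat E$, which is trivially monochromatic since $|U|\ge 2$ guarantees both sides nonempty. Your flag about degenerate cases where some $A_i$ or $B_i$ is empty is a fair observation applying equally to the paper's phrasing (``two connected components'') and is easily dispatched as you say, since $G[A_i,B_j]$ connected forces any empty-side part to leave at most one vertex on the other side, so the bridging edge still connects everything. One small thing you should add when writing this up: a sentence verifying that the $E_{ij}$ form a valid partial colouring, i.e.\ that they are pairwise disjoint and each satisfies $e\subseteq V_i\cup V_j$ — this is immediate from the distinctness of the $f_{ij}$ within each $F_i$ and from $F_i\subseteq G[A_i,B_i]\subseteq G[V_i]$, but it is a required hypothesis of \cref{thm:rigid:partition}.
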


\begin{proof}
  Let $V_1,\dots,V_{d+1}$ be a strong bipartite $d$-rigid partition
  of a bipartite graph $G=(A\cup B,E)$ with bipartition $(A,B)$.
  So $V_1,\dots,V_{d+1}$ is a colouring of $V=A\cup B$.
  For $1\le i\le d+1$, write $A_i=V_i\cap A$ and $B_i=V_i\cap B$.
There is a sequence $0\le s_1\leq s_2\le \cdots \le s_{d+1}$ satisfying $\sum_{i=1}^k s_i\geq \binom{k}{2}$ for all $1\leq k\leq d+1$ such that $G[A_i,B_i]$ contains a forest $F_i$ with $s_i$ edges for all $1\leq i\leq d+1$. Without loss of generality, we may assume that $\sum_{i=1}^{d+1}s_i=\binom{d+1}{2}$. 
By a theorem of Landau~\cite{Lan53}, there exists a tournament $T$ on vertex set $[d+1]$ such that the out-degree of vertex $i$ is exactly $s_i$ for all $1\leq i\leq d+1$.  
For each $1\leq i\leq d+1$,
denote the edges of $F_i$ by $e_{i}^{j_1},\ldots,e_{i}^{j_{s_i}}$,
where $j_1,\ldots,j_{s_i}$ are the $s_i$ out-neighbours of $i$ in $T$. 

Let $1\le i<j\le d+1$.
If $(i,j)$ is a directed edge of $T$, set $e_{ij}=e_i^j$;
otherwise set $e_{ij}=e_j^i$.
In any case, define
$E_{ij}=E(V_i,V_j)\cup \{e_{ij}\}$. 
Thus, $\{E_{ij}\}_{1\le i<j\le d+1}$ is a partial colouring of $E$.
  Moreover,
  for $1\le i<j\le d+1$, $G_{ij}=(V_i\cup V_j,E_{ij})$ is connected since $G[V_i,V_j]$
  is a subgraph of $G_{ij}$ with two connected components
  (as $G[A_i,B_j]$ and $G[A_j,B_i]$ are both connected)
  and $e_{ij}$ is an edge connecting them.

Finally, let $1\le i\le d+1$ and let $U\subseteq V_i$ with $|U|\ge 2$.
  We have to show that $U$ has a monochromatic cut.
 Let $\hat{E}=\cup_{1\leq i'<j'\leq d+1} E_{i'j'}$.
  If $U$ does not span any of the edges $e_i^j$ for $j\neq i$
  then $\hat{E}\cap E(U)=\es$, hence $U$ has a monochromatic cut trivially.
  Otherwise, suppose $e_i^j\subseteq U$ for some $j\neq i$,
  and write $e_i^j=\{u,v\}$. Let $U'$ be the connected component of $u$ in $F_i\smallsetminus e_i^j$, and $U''=U\smallsetminus U'$. Since $F_i$ is a forest, we have $v\in U''$, and therefore $U',U''\neq \es$. 
  Then, $\hat{E}\cap E(U',U'')=\{e_i^j\}$, which is a subset of either $E_{ij}$ (if $i<j$) or $E_{ji}$ (if $j<i$).
  Thus, $U',U''$ is a monochromatic cut for $U$, as wanted.
\end{proof}

For our arguments on random bipartite graphs, we will use the following special class of strong bipartite $d$-rigid partitions:
\begin{lemma}\label{lem:matching:bipartite}
  Let $G=(A\cup B,E)$ be a bipartite graph with bipartition $(A,B)$.
  Let $V_1,\ldots, V_{d+1}$ be a partition of $V=A\cup B$.
  Write $A_i=V_i\cap A$ and $B_i=V_i\cap B$ for $1\le i\le d+1$.
  Assume that $G[A_i,B_j]$ is connected for all $i\neq j$,
  and that $G[A_i,B_i]$ contains a matching of size $d$ for all $1\leq i\leq d+1$.
  Then, $V_1,\ldots,V_{d+1}$ is a strong bipartite $d$-rigid partition of $G$.
\end{lemma}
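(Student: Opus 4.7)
The plan is simply to verify that the hypotheses force the existence of a witnessing sequence $s_1 \le s_2 \le \cdots \le s_{d+1}$ of the kind required in the definition of a strong bipartite $d$-rigid partition. The connectivity of $G[A_i, B_j]$ for $i \ne j$ is already given by assumption, so the only thing left is to exhibit the sequence and, for each $i$, a forest in $G[A_i, B_i]$ with $s_i$ edges.

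I would set $s_i = i - 1$ for $1 \le i \le d+1$. Then $0 = s_1 \le s_2 \le \cdots \le s_{d+1} = d$ trivially, and Gauss's identity gives $\sum_{i=1}^{k} s_i = \sum_{i=1}^{k}(i-1) = \binom{k}{2}$ for every $1 \le k \le d+1$, so the required lower bound $\sum_{i=1}^k s_i \ge \binom{k}{2}$ holds (with equality).

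To produce the forests, I would use the matching hypothesis: by assumption, $G[A_i, B_i]$ contains a matching $M_i$ of size $d$; since $i - 1 \le d$, any $i - 1$ edges of $M_i$ form a matching, hence a forest, of size $s_i = i - 1$ inside $G[A_i, B_i]$. This verifies every clause of the definition of a strong bipartite $d$-rigid partition, completing the proof.

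There is really no obstacle here — the lemma is a direct packaging of the hypotheses into the form required by the earlier definition. The only thing to notice is the choice of sequence; picking $s_i = i - 1$ matches the summation constraint tightly and requires exactly $d$ edges in the ``top'' block $G[A_{d+1}, B_{d+1}]$, which is why the matching hypothesis is stated with size $d$.
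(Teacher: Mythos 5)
Your proof is correct and takes essentially the same approach as the paper: verify the definition of a strong bipartite $d$-rigid partition directly, using the fact that a matching is a forest. The only difference is cosmetic — you take $s_i = i-1$ (achieving equality $\sum_{i=1}^k s_i = \binom{k}{2}$), while the paper takes $s_i = d$ for all $i$ and notes $kd \ge \binom{k}{2}$ for $k \le d+1$; both choices satisfy the required inequalities and are supported by the size-$d$ matching hypothesis.
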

\begin{proof}
    The claim follows immediately from the definition of a strong bipartite $d$-rigid partition, using the fact that a matching is a forest, and that for all $1\leq k\leq d+1$ we have $k d \geq \binom{k}{2}$.
\end{proof}

\subsection{Rigidity of highly-connected graphs}\label{sec:conn}
For the proof of \cref{thm:connectivity},
we will need the following result by Censor-Hillel et al.~\cite{CGGHK17},
showing that highly-connected graphs admit a large CDS partition.
\begin{theorem}[{\cite{CGGHK17}*{Corollary~1.6}}]\label{thm:connectivity:implies:cdp}
  There exists $c>0$ such that every $k$-connected $n$-vertex graph
  admits a CDS partition of size $ck/{\log^2}{n}$.
\end{theorem}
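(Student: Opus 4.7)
The plan is to construct a CDS partition of size $s := ck/\log^2 n$ (for a small constant $c > 0$) by combining a random vertex partition, which handles domination, with a routing structure derived from edge-disjoint spanning trees, which handles connectivity. I would begin by sampling a partition: assign each vertex of $V$ independently and uniformly to one of $s$ colour classes $V_1,\ldots,V_s$. Since $G$ is $k$-connected, $\delta(G)\geq k$, so for every vertex $v$ and every colour $i$, the probability that $V_i$ misses both $v$ and all neighbours of $v$ is at most $(1-1/s)^{k+1}\leq \exp(-k/s)=\exp(-\log^2 n/c)$. A union bound over the $ns$ (vertex, colour) pairs shows that every $V_i$ is a dominating set \whp, provided $c$ is sufficiently small.

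The main obstacle will be guaranteeing connectivity of each $G[V_i]$, since random sampling alone does not preserve connectivity in the induced subgraph even in arbitrarily highly-connected host graphs. To address this, I would invoke the Nash-Williams--Tutte theorem (using that $k$-vertex-connectivity implies $k$-edge-connectivity) to extract $\lfloor k/2\rfloor$ edge-disjoint spanning trees of $G$, and assign a block of $\Theta(\log^2 n)$ trees to each colour class $V_i$. These trees serve as a routing infrastructure: for any two would-be components of $G[V_i]$, each assigned tree offers an independent chance for a short connecting path to lie fully inside $V_i$. Using a tree embedding or low-diameter decomposition of each spanning tree to keep routing paths short (length $O(\log n)$) and exploiting the $\Theta(\log^2 n)$ independent routing attempts per pair, one can show via a careful union bound that a connecting path is contained in $V_i$ for every pair \whp, hence $G[V_i]$ is connected.

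The $\log^2 n$ factor in the denominator of $s$ arises naturally as two losses of $\log n$: one from controlling the lengths of tree paths (e.g., via Bartal/Fakcharoenphol--Rao--Talwar-style embeddings, or iterated low-diameter decomposition), and one from union-bounding over the $O(\log n)$ merging scales, or equivalently over all pairs of components. An alternative, more algorithmic route is iterative: repeatedly peel off a CDS as the internal (non-leaf) vertices of a spanning tree of the residual graph; the difficulty then shifts to controlling the per-iteration decrease in connectivity, for which a polylogarithmic loss again emerges. Either way, the hard part is not the domination argument but the careful path-routing analysis certifying connectivity of each class.
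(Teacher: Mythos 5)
This statement is not proved in the paper at all: it is quoted verbatim from~\cite{CGGHK17}*{Corollary~1.6} and used as a black box, so there is no internal proof to compare against. Your domination step is sound (and in fact wastes a factor of $\log n$: minimum degree $k$ already gives every vertex a neighbour in every class \whp{} once $s=O(k/\log n)$), but the connectivity step has a genuine gap that I do not see how to close along the lines you sketch.

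You propose to extract $\lfloor k/2\rfloor$ edge-disjoint spanning trees of $G$ via Nash-Williams--Tutte and assign $\Theta(\log^2 n)$ of them to each colour class $V_i$ as ``routing infrastructure''. The difficulty is that these are spanning trees of the \emph{entire} graph $G$: a tree path between two vertices of $V_i$ passes through vertices of arbitrary colours, and the probability that all of its internal vertices land in $V_i$ is $(1/s)^{L}$, where $L$ is the path length. Even if one could arrange $L=O(\log n)$ (which is itself problematic --- a spanning tree of a $k$-connected graph can have diameter $\Theta(n)$, and FRT/Bartal-type embeddings produce Steiner/dominating trees rather than low-diameter spanning trees of $G$), this gives a success probability of order $s^{-O(\log n)}=n^{-\Theta(\log s)}$ per tree, which cannot be union-bounded away with only $\mathrm{polylog}(n)$ independent trees per class. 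Put differently, edge-disjoint spanning tree packings are the right tool for preserving \emph{edge}-connectivity under \emph{edge} sampling (Karger's theorem), but they do not transfer to vertex sampling: restricting a spanning tree to a random colour class produces a badly fragmented forest, and the tree's routing paths are simply gone. The argument in~\cite{CGGHK17} does not proceed by fixing a tree packing up front; it builds the CDS partition iteratively, growing and merging connected fragments inside each class over $O(\log n)$ rounds, with the $\log^2 n$ loss arising from the number of rounds and the per-round connectivity guarantee, not from controlling path lengths in a prescribed tree packing. Your second (``peeling'') suggestion has a similar unaddressed issue: removing a connected dominating set need not decrease vertex connectivity gracefully, so the per-iteration loss is not under control.
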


\begin{proof}[Proof of \cref{thm:connectivity}]
  Let $c>0$ be the constant from \cref{thm:connectivity:implies:cdp}, and let $C=1/c$. 
  Let $d=d(n)\geq 1$, and let $k= C d^2{\log^2}{n}$.
  Let $G=(V,E)$ be a $k$-connected $n$-vertex graph.
  By \cref{thm:connectivity:implies:cdp},
  $G$ admits a CDS partition of size $c k/{\log^2}{n} = d^2\geq \binom{d+1}{2}$.
  Hence, by \cref{cor:cdsp:rigid}, $G$ is $d$-rigid.
\end{proof}

Note that improving the bound in \cref{thm:connectivity:implies:cdp}
would immediately imply an improved bound in \cref{thm:connectivity}.
However, Censor-Hillel, Ghaffari and Kuhn showed in \cite{CGK14}
that there exist $k$-connected graphs that can be partitioned
into at most $O(k/\log{n})$ connected dominating sets.
Therefore, the best that we can hope for, using our current arguments,
is to reduce the requirement on the connectivity of $G$ in \cref{thm:connectivity}
from $\Omega(d^2 {\log^2}{n})$ to $\Omega(d^2\log{n})$.
It is also worth noting that for every {\em fixed} $k$,
there exists a $k$-connected graph
with no CDS partition of size $2$ ---
this follows from, e.g.,~\cite{KO03}*{Proposition~6}.

\subsection{Rigidity of complete bipartite graphs}\label{sec:bipartite}
We conclude this section with a proof of Proposition \ref{prop:complete:bipartite}.

\begin{proof}[Proof of Proposition \ref{prop:complete:bipartite}]
  We define for $1\leq i\leq d+1$,
  \[
      s_i=\begin{cases}
          1 & \text{if } i\leq 3,\\
          i-1 & \text{if } i>3.
      \end{cases}
  \]
  Note that $0\leq s_i\leq s_{i+1}$ for all $1\leq i\leq d$,
  $\sum_{i=1}^k s_i \geq \binom{k}{2}$ for all $1\leq k \leq d+1$,
  and $\sum_{i=1}^{d+1}s_i=\binom{d+1}{2}$. 
  Let $V=A\cup B$ be the vertex set of $G=K_{m,n}$, and let $A$ and $B$ be its two parts,
  with $|A|=m$ and $|B|=n$.
  Since $m,n\geq d+1$ and $|V|=m+n\geq \binom{d+2}{2}= \sum_{i=1}^{d+1} (s_i+1)$,
  there is a partition $V_1,\ldots, V_{d+1}$ of $V$
  such that $V_i\cap A\neq \es$, $V_i\cap B\neq \es$ and $|V_i|\geq s_i+1$ for all $1\leq i\leq d+1$.

  For $1\leq i\leq d+1$, let $A_i=V_i\cap A$ and $B_i=V_i\cap B$.
  Note that $G[V_i]=G[A_i,B_i]$ is connected for all $1\leq i\leq d+1$
  (since it is a complete bipartite graph itself, with parts $A_i,B_i\neq \es$).
  Therefore, it contains a spanning tree with $|V_i|-1\geq s_i$ edges.
  Moreover, for any $i\neq j$, $G[A_i,B_j]$ is connected
  (again, since it is a complete bipartite graph with parts $A_i,B_j\neq \es$).
  Therefore, $V_1,\ldots,V_{d+1}$ is a strong bipartite $d$-rigid partition of $G=K_{m,n}$.
\end{proof}

\subsection{Limitations of rigid partitions}\label{sec:remarks}
We do not expect the existence of a $d$-rigid partition to be a necessary condition for $d$-rigidity, except for $d=2$.
Indeed, we have the following restriction on the maximum $d$ for which a graph $G$ admits a $d$-rigid partition:

\begin{proposition}\label{prop:restriction}
Let $G=(V,E)$ be a graph with clique number $\omega(G)$. Assume that $G$ admits a $d$-rigid partition. Then
\[
    d\leq \frac{|V|+\omega(G)}{2}.
\]
\end{proposition}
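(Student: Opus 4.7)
The plan is to compare $|V|$ and $\omega(G)$ with certain counts associated to the given $d$-rigid partition $\{V_i\}_{i=1}^{d+1}$. I would classify the parts by size: let $\varepsilon$ be the number of empty parts, $s$ the number of singleton parts, and $b$ the number of parts of size at least $2$, so $\varepsilon + s + b = d+1$. The first observation is that $\varepsilon \leq 1$: if both $V_i$ and $V_j$ were empty then $G_{ij} = (V_i \cup V_j, E_{ij})$ would have no vertices, which the definition of a $d$-rigid partition treats as disconnected, contradicting the connectivity of $G_{ij}$. Hence $s + b \geq d$.

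Next I claim that the singleton vertices of the partition form a clique in $G$. Indeed, whenever $V_i = \{u\}$ and $V_j = \{v\}$ are both singletons with $i < j$, the graph $G_{ij}$ has exactly two vertices, so its connectivity forces $\{u,v\} \in E_{ij} \subseteq E$. Applying this to every pair of singleton indices yields a clique of size $s$ in $G$, giving $\omega(G) \geq s$.

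Combining these with the trivial bound $|V| = \sum_i |V_i| \geq s + 2b$ gives
\[
    |V| + \omega(G) \geq (s + 2b) + s = 2(s+b) \geq 2d,
\]
which rearranges to the desired inequality $d \leq (|V| + \omega(G))/2$. The proof is essentially bookkeeping; the monochromatic cut condition plays no role, as it is vacuous for singleton parts and the parts of size at least $2$ only enter through counting vertices. I therefore do not anticipate any serious obstacle.
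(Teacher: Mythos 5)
Your proof is correct and follows essentially the same route as the paper's: bound the number of empty parts by one, observe that the singleton parts span a clique (so their count is at most $\omega(G)$), and then do the size-counting $|V|\geq s+2b$ to deduce the inequality. The only cosmetic difference is that you introduce the variable $b$ for the number of large parts, whereas the paper expresses the same count as $d+1-|I|-1$; the argument is identical.
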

\begin{proof}
    Let $\left(\{V_{i}\}_{i=1}^{d+1},\{E_{ij}\}_{1\leq i<j\leq d+1}\right)$ be a $d$-rigid partition of $G$.
    For $1\leq i< j\leq d+1$, let $G_{ij}=(V_i\cup V_j,E_{ij})$.  
    First, note that at most one of the sets $V_i$ may be empty.
    Let $I=\{1\leq i\leq d+1:\, |V_i|= 1\}$.
    Note that, for $i,j\in I$, with $i<j$, the unique vertex in $V_i$ must be adjacent in $G$ to the unique vertex in $V_j$,
    otherwise the graph $G_{ij}$ would be disconnected.
    Therefore, the vertices in $\bigcup_{i\in I}V_i$ form a clique in $G$.
In particular, we must have $|I|\leq \omega(G)$. On the other hand, we have
    \[
        0\cdot 1+ 1\cdot |I|+ 2\cdot (d+1-|I|-1)\leq \sum_{i=1}^{d+1}V_i = |V|,
    \]
    and therefore $|I|\geq 2d-|V|$. We obtain $d\leq (|V|+\omega(G))/2$, as wanted.
\end{proof}

Now, let $n$ be an even number, and let $G_n$ be the \defn{$n$-hyperoctahedral graph}
--- that is, the graph obtained by removing a perfect matching from the complete graph on $n$ vertices.
It is conjectured that the rigidity of $G_n$ is $n-1-\lfloor \sqrt{n}+1/2\rfloor$
(see~\cite{CN23}*{Conjecture~3.4}).
We verified this conjecture by computer calculation for $n\leq 150$.
On the other hand, since a maximal clique in $G_n$ is of size $n/2$,
then by \cref{prop:restriction} $G_n$ does not admit a $d$-rigid partition for $d>\frac{3n}{4}$.

Let us consider another example that, assuming the validity of \cref{conj:gnp},
highlights an even greater disparity between graph rigidity and the existence of rigid partitions.
Consider the binomial random graph $G\sim G(n,1/2)$
(any constant edge probability would serve similarly).
If $G$ has a $d$-rigid partition $\left(\{V_{i}\}_{i=1}^{d+1},\{E_{ij}\}_{1\leq i<j\leq d+1}\right)$,
then there must be an edge in $G$ between each pair $(V_i,V_j)$
for $G_{ij}=(V_i\cup V_j,E_{ij})$ to be connected.
However, standard calculations show (see~\cite{BCE80}) that for $d\ge Cn/\sqrt{\log{n}}$
(for some $C>0$),
such a family of $d$ disjoint vertex sets does not exist \whp{}.
Therefore, if we accept the validity of \cref{conj:gnp},
it follows that $G$ is \whp{} $d$-rigid for $d=\Theta(n)$,
yet it does not admit a $d$-rigid partition for any $d\ge Cn/\sqrt{\log{n}}$.

It would be interesting to find examples of $d$-rigid graphs that do not admit $d$-rigid partitions for small values of $d$, and in particular for $d=3$.

\section{Conditions for rigid partitions}\label{sec:conditions}
In this section,
we develop general and easily verifiable sufficient conditions for the existence of rigid partitions.
Central to our findings is the notion that ``sparse'' graphs
that exhibit certain weak expansion properties admit strong rigid partitions.
In \cref{sec:randomgraphs}, this observation will be applied to various scenarios.

We will repeatedly make use of the following version of Chernoff bounds
(see, e.g., in~\cite{JLR}*{Chapter~2}).
Let $\phi(x)=(1+x)\log(1+x)-x$ for $x>-1$.
\begin{theorem}[Chernoff bounds]\label{thm:chernoff}
  Let $n\ge 1$ be an integer and let $p\in[0,1]$,
  let $X\sim\Bin(n,p)$, and let $\mu=\E{X}=np$.
  Then, for every $\nu>0$,
  \begin{align*}
    \pr(X\le \mu-\nu) &\le \exp\left(-\mu\phi\left(-\frac{\nu}{\mu}\right)\right) \le \exp\left(-\frac{\nu^2}{2\mu}\right),\\
    \pr(X\ge \mu+\nu) &\le \exp\left(-\mu\phi\left(\frac{\nu}{\mu}\right)\right) \le \exp\left(-\frac{\nu^2}{2(\mu+\nu/3)}\right),
  \end{align*}
  from which it follows, in particular, that for $\nu<\mu$,
  \[
    \pr(|X-\mu|\ge\nu) \le 2\exp\left(-\frac{\nu^2}{3\mu}\right).
  \]
\end{theorem}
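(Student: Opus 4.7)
The plan is the classical Cramér--Chernoff exponential moment argument. For any $t > 0$, Markov's inequality applied to $e^{tX}$ gives
\[
\pr(X \geq \mu + \nu) \leq e^{-t(\mu+\nu)}\,\E[e^{tX}],
\]
and since $X$ is a sum of $n$ independent $\mathrm{Bernoulli}(p)$ variables, the moment generating function factors as $\E[e^{tX}] = (1 - p + p e^t)^n \leq \exp(\mu(e^t - 1))$, using the elementary inequality $1 + x \leq e^x$. A symmetric argument with $t < 0$, or equivalently the same argument applied to $n - X \sim \Bin(n, 1 - p)$, handles the lower tail.

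The sharp $\phi$-bounds then follow by optimising over $t$. For the upper tail the optimal choice is $t = \log(1 + \nu/\mu)$, which after a short simplification yields the exponent
\[
-\mu\bigl[(1 + \nu/\mu)\log(1 + \nu/\mu) - \nu/\mu\bigr] = -\mu\,\phi(\nu/\mu),
\]
and the analogous choice $t = \log(1 - \nu/\mu) < 0$, valid for $\nu < \mu$, gives $-\mu\,\phi(-\nu/\mu)$ for the lower tail. This recovers both exact exponents stated in the theorem. To pass to the simplified sub-Gaussian/Bernstein forms I would invoke two standard analytic inequalities for $\phi$: namely $\phi(-x) \geq x^2/2$ for $x \in (0,1)$, and $\phi(x) \geq x^2/(2 + 2x/3)$ for $x \geq 0$. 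Each is verified by checking that a suitable difference function vanishes at zero together with its first derivative and has a nonnegative second derivative on the relevant range --- an elementary one-variable calculus exercise. Substituting $x = \nu/\mu$ into each produces exactly the two displayed simplified bounds.

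The two-sided estimate then follows by a union bound. For $\nu < \mu$ one has $\mu + \nu/3 < 4\mu/3$, so the upper-tail exponent $\nu^2/(2(\mu+\nu/3))$ exceeds $3\nu^2/(8\mu) \geq \nu^2/(3\mu)$, and the lower-tail exponent $\nu^2/(2\mu)$ likewise exceeds $\nu^2/(3\mu)$, giving the common clean bound $2\exp(-\nu^2/(3\mu))$. The entire proof is routine and standard; the only piece that requires any genuine computation is the verification of the two convexity-type inequalities for $\phi$, and these are classical (and explicitly carried out in the cited reference \cite{JLR}*{Chapter~2}). There is no conceptual obstacle.
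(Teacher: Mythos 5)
Your proof is correct and is exactly the standard Cram\'er--Chernoff argument that the cited reference~\cite{JLR}*{Chapter~2} carries out; the paper itself does not prove this theorem but simply quotes it. Every step checks out: the MGF bound $\E[e^{tX}]\le\exp(\mu(e^t-1))$, the optimising choices $t=\log(1\pm\nu/\mu)$ yielding the exact $\phi$-exponents, the two elementary inequalities $\phi(-x)\ge x^2/2$ on $(0,1)$ and $\phi(x)\ge x^2/(2+2x/3)$ on $[0,\infty)$ (both verified by the second-derivative test you describe), and the final union-bound comparison $\min\{\tfrac{1}{2},\tfrac{3}{8}\}\ge\tfrac{1}{3}$ giving the clean two-sided form. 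No gap.
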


We will also need the following version of the Lov\'asz Local Lemma;
see, e.g.,~\cite{AS}*{Section~5.1}.
\begin{theorem}[Lov\'asz Local Lemma]\label{thm:lll}
  Let $\cA_1,\dots,\cA_n$ be events in some probability space.
  Suppose that for every $i\in[n]$,
  each $\cA_i$ is mutually independent of a set of all other events $A_j$ but at most $\Delta$,
  and that $\pr(\cA_i)\le 1/(e(\Delta+1))$.
  Then:
  \[
    \pr\left(
      \bigcap_{i=1}^n \overline{\cA_i}
    \right) \ge \left(1-1/(\Delta+1)\right)^n.
  \]
\end{theorem}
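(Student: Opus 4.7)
The plan is to use the standard inductive argument going back to Erd\H{o}s and Lov\'asz. Set $x = 1/(\Delta+1) \in [0,1)$. The key inductive lemma I would prove is that for every $i \in [n]$ and every $S \subseteq [n] \setminus \{i\}$ with $\pr\bigl(\bigcap_{j \in S} \overline{\cA_j}\bigr) > 0$, one has
\[
  \pr\Bigl(\cA_i \,\Big|\, \bigcap_{j \in S} \overline{\cA_j}\Bigr) \le x.
\]
Given this, the theorem follows immediately by a chain-rule expansion:
\[
  \pr\Bigl(\bigcap_{i=1}^n \overline{\cA_i}\Bigr) = \prod_{i=1}^n \pr\Bigl(\overline{\cA_i} \,\Big|\, \bigcap_{j < i} \overline{\cA_j}\Bigr) \ge \prod_{i=1}^n (1-x) = \bigl(1 - \tfrac{1}{\Delta+1}\bigr)^n.
\]

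I would establish the lemma by induction on $|S|$. The base case $S = \es$ is immediate from the hypothesis, since $\pr(\cA_i) \le 1/(e(\Delta+1)) \le x$. For the inductive step, partition $S = S_1 \cup S_2$, where $S_1$ consists of those $j \in S$ for which $\cA_i$ is \emph{not} mutually independent of $\cA_j$; by hypothesis, $|S_1| \le \Delta$. I then write the target conditional probability as the ratio
\[
  \pr\Bigl(\cA_i \,\Big|\, \bigcap_{j \in S} \overline{\cA_j}\Bigr) = \frac{\pr\bigl(\cA_i \cap \bigcap_{j \in S_1} \overline{\cA_j} \,\big|\, \bigcap_{k \in S_2} \overline{\cA_k}\bigr)}{\pr\bigl(\bigcap_{j \in S_1} \overline{\cA_j} \,\big|\, \bigcap_{k \in S_2} \overline{\cA_k}\bigr)}.
\]
The numerator I would bound above by $\pr(\cA_i \mid \bigcap_{k \in S_2} \overline{\cA_k}) = \pr(\cA_i) \le 1/(e(\Delta+1))$, using that $\cA_i$ is mutually independent of the family indexed by $S_2$. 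The denominator I would expand via the chain rule on $S_1$ and apply the inductive hypothesis to each of the at most $\Delta$ resulting factors, yielding $(1-x)^{|S_1|} \ge (1 - 1/(\Delta+1))^{\Delta} \ge 1/e$. Dividing, the ratio is at most $(1/(e(\Delta+1))) \cdot e = 1/(\Delta+1) = x$, closing the induction.

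The sole calculation needed is the elementary estimate $(1 - 1/(\Delta+1))^{\Delta} \ge 1/e$, which follows from $\log(1+1/\Delta) \le 1/\Delta$. The real obstacle is conceptual rather than technical: one must identify the correct inductive parameter $x = 1/(\Delta+1)$, tuned so that the factor of $e$ in the hypothesis is precisely what is needed to absorb the $(1-x)^{\Delta} \ge 1/e$ arising from the denominator bound. A secondary subtlety is that one must verify the conditioning events retain positive probability throughout the induction, which is automatic here because each factor $\pr(\overline{\cA_i} \mid \cdots) \ge 1 - x > 0$.
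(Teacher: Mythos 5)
Your proof is correct and is the standard symmetric Lov\'asz Local Lemma argument. However, the paper does not actually prove this theorem --- it cites it as a known result (from~\cite{AS}*{Section~5.1}, i.e.\ Alon--Spencer), so there is no in-paper proof to compare against. Your induction with the parameter $x=1/(\Delta+1)$, the split of the conditioning set into dependent ($S_1$, $|S_1|\le\Delta$) and independent ($S_2$) parts, the bound $\pr(\cA_i\mid\bigcap_{k\in S_2}\overline{\cA_k})=\pr(\cA_i)$ for the numerator, and the chain-rule lower bound $(1-x)^{\Delta}\ge 1/e$ for the denominator are precisely the textbook steps; the positivity of all conditioning events is indeed handled inductively exactly as you remark, since each conditional survival probability is at least $1-x>0$ once $\Delta\ge 1$ (and for $\Delta=0$ the stated lower bound is $0$, so the theorem is vacuous). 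No gaps.
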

Our next lemma is the key lemma of this section
and plays a crucial role throughout the rest of the paper.
In essence, it asserts that for any graph with a sufficiently large minimum degree
and a bounded min-to-max degree ratio,
there exists a partition of its vertex set into $d$ parts in which every vertex has an ample number of neighbours
in each part.

\begin{lemma}[Partition lemma]\label{lem:partition}
  For every $\nlb\in(0,1)$ and $\drat\ge 1$ there exists $C=C(\nlb,\drat)>1$ 
  such that the following holds.
  For every $d\ge 2$,
  any $n$-vertex graph $G=(V,E)$
  with minimum degree $\delta\ge C d\log{d}$
  and maximum degree $\Delta\le\drat\delta$
  has a partition $V=V_1\cup\cdots\cup V_d$
  such that every vertex in $V$ has at least $(1-\nlb)\delta/d$ neighbours in $V_i$ for all $1\leq i\leq d$.
  
  Moreover, the probability of obtaining such a partition, if we partition the vertex set at random by independently assigning each vertex $v\in V$ to the set $V_i$ with probability $1/d$, is at least $(1-1/\Delta^2)^n$.
\end{lemma}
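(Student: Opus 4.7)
I plan to take the obvious random partition and apply the Lov\'asz Local Lemma; in fact, the ``moreover'' clause is essentially what the LLL produces, up to a routine application of Bernoulli's inequality.

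Assign each vertex $v\in V$ independently to $V_i$ with probability $1/d$. For each $v\in V$ and each $i\in[d]$, let $\cA_{v,i}$ denote the bad event that $v$ has fewer than $(1-\nlb)\delta/d$ neighbours in $V_i$. Since the number of neighbours of $v$ in $V_i$ is distributed as $\Bin(\deg(v),1/d)$ with mean $\mu_v=\deg(v)/d\geq \delta/d$, and since $(1-\nlb)\delta/d\leq \mu_v-\nlb\delta/d$, \cref{thm:chernoff} gives
\[
  \pr(\cA_{v,i}) \leq \exp\!\left(-\frac{(\nlb\delta/d)^2}{2\deg(v)/d}\right) \leq \exp\!\left(-\frac{\nlb^2\delta}{2\drat d}\right) \leq d^{-\nlb^2 C/(2\drat)},
\]
using $\deg(v)\leq \Delta\leq \drat\delta$ and $\delta\geq Cd\log d$.

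The event $\cA_{v,i}$ is determined by the colours assigned to $N(v)$, hence is mutually independent of every $\cA_{u,j}$ with $N(v)\cap N(u)=\es$. Since the number of vertices $u$ with $N(u)\cap N(v)\ne\es$ is at most $\sum_{w\in N(v)}\deg(w)\leq \Delta^2$, $\cA_{v,i}$ is mutually independent of all but at most $d\Delta^2-1$ other events. For $C=C(\nlb,\drat)$ sufficiently large we therefore have $\pr(\cA_{v,i})\leq 1/(ed\Delta^2)$, so \cref{thm:lll} guarantees that no $\cA_{v,i}$ occurs with probability at least $(1-1/(d\Delta^2))^{nd}$. Bernoulli's inequality then yields $(1-1/(d\Delta^2))^d \geq 1-d/(d\Delta^2)=1-1/\Delta^2$, and raising to the $n$-th power gives the claimed bound $(1-1/\Delta^2)^n$ on the success probability, which also exhibits a valid partition.

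The only nontrivial step is calibrating $C$ so that the Chernoff bound beats the LLL threshold $1/(ed\Delta^2)$ uniformly over all admissible $\delta$ and $\Delta$. Because the left-hand side decays exponentially in $\delta/d$ while the right-hand side decays only polynomially in $\delta$ (via $\Delta\leq \drat\delta$), the worst case is $\delta=Cd\log d$, where the required inequality reduces to $\nlb^2 C\log d/(2\drat) \geq 3\log d + O(\log\log d)+O(\log C)$, which is easily verified by taking $C$ sufficiently large as a function of $\nlb$ and $\drat$.
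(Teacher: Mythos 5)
Your proof is correct and follows essentially the same Chernoff-plus-Lov\'asz-Local-Lemma route as the paper. The only cosmetic difference is that you apply the LLL directly to the $nd$ events $\cA_{v,i}$ (hence the Bernoulli step at the end to recover $(1-1/\Delta^2)^n$), whereas the paper bundles them into the $n$ union events $\cA_v=\bigcup_i\cA_{v,i}$, which yields the same threshold condition $\pr(\cA_{v,i})\le 1/(ed\Delta^2)$ and gives the stated probability directly.
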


\begin{proof}
Let $\{m(v) : \, v\in V\}$ be a family of independent random variables, each uniformly distributed on $[d]$. For all $1\leq i\leq d$, let $
    V_i= \{v\in V: m(v)=i\}$.
For $v\in V$ and $i\in[d]$ let $\cA_{v,i}$ be the event that $|N(v)\cap V_i|\le (1-\nlb)\delta/d$,
and let $\cA_v=\bigcup_{i\in[d]} \cA_{v,i}$.
Note that $|N(v)\cap V_i|\sim \Bin(\deg(v),1/d)$,
so by Chernoff's bounds (\cref{thm:chernoff}) we obtain
\[
    \pr(\cA_{v,i})= \pr(|N(v)\cap V_i|\le (1-\nlb)\delta/d)
    \leq \pr(|N(v)\cap V_i|\le (1-\nlb)\deg(v)/d) \leq e^{\frac{-\nlb^2 \deg(v)}{2 d}} \leq e^{\frac{-\nlb^2 \delta}{2 d}}.
\]
By the union bound
we have
\[
  p:=\pr(\cA_v)\leq \sum_{i=1}^d \pr(\cA_{v,i}) \leq d e^{\frac{-\nlb^2 \delta}{2 d}}.
\]
Note that for every $u\in V$, there are at most $\Delta(\Delta-1)\le\Delta^2-1$ vertices $v$ for which $N(u)\cap N(v)\ne\es$.
Thus, the event $\cA_u$ is mutually independent of the set of all other events $\cA_v$
but at most $\Delta^2-1$.
To use the Lov\'asz Local Lemma~(\cref{thm:lll}), it is enough to show that
    $e p \Delta^2 \leq 1$.
Indeed,
letting $\delta=Cd\log{d}$,
\[
  ep\Delta^2
    \le e \cdot d e^{\frac{-\nlb^2 \delta}{2 d}} (\drat \delta)^2
    = e \drat^2 {\log^2}{d}\cdot C^2 d^{3-\frac{\nlb^2 C}{2}}.
\]
Write $\psi(d)={\log^2}{d}/d$.
The function $\psi(d)$ attains its maximum at $d=e^2$,
where it equals $4/e^2$.
Thus, for $C\ge 8/\nlb^2$, we obtain
\[
 e \drat^2 {\log^2}{d} C^2 d^{3-\frac{\nlb^2 C}{2}} = e \drat^2 \psi(d)
 C^2 d^{-(\nlb^2 C/2-4)}\leq \frac{4 \drat^2}{e}
 C^2 d^{-(\nlb^2 C/2-4)}\leq \frac{4 \drat^2}{e}
 C^2 2^{-(\nlb^2 C/2-4)}.
\]
Thus, there exists $C_1=C_1(\nlb,\drat)$ for which for every $C\ge C_1$
the above expression is at most $1$.
 
Therefore, by the Lov\'asz Local Lemma~(\cref{thm:lll}),
for $C\ge C_1$, the probability none of the $\cA_v$ occur is at least $(1-1/\Delta^2)^n$.
\end{proof}

In certain applications, we need the partition to be ``balanced''
with respect to some underlying colouring of the vertex set.
That is, given a colouring of the vertex set,
we need our partition to split each of the colour classes into roughly equally sized parts.
In our specific setting, we will exclusively apply this lemma to 
the case of two
colour classes (see \cref{prop:sparseconnector:is:bipartite:rigid}).
\begin{lemma}[Balanced partition lemma]\label{lem:partition:balanced}
  For every $\nlb,\psb\in(0,1)$ and $\drat,t \ge 1$ there exist $C=C(\nlb,\psb,\drat,t)>1$ 
  such that the following holds.
  Let $d\ge 2$, and let $G=(V,E)$ be an $n$-vertex graph with minimum degree
  $\delta\ge C d\log{d}$ and maximum degree $\Delta\le\drat\delta$.
  Assume that $V$ is partitioned into $t$ sets $A_1,\ldots,A_t$, such that $|A_i|=n/t$ for
  all $1\leq i\leq t$.
  Then, there exists a partition $V=V_1\cup\cdots\cup V_d$ such that every
  vertex in $V$ has at least $(1-\nlb)\delta/d$ neighbours in $V_i$ for all
  $1\leq i\leq d$, and $(1-\psb)n/(td)\leq |V_i\cap A_j|\leq (1+\psb)n/(td)$
  for all $1\leq i\leq d$ and $1\leq j\leq t$.
\end{lemma}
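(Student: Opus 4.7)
The plan is to reuse the random assignment from the proof of \cref{lem:partition} and combine the Lov\'asz Local Lemma bound from that proof with a straightforward union bound controlling balance. Let $m : V \to [d]$ consist of independent uniform random variables, set $V_i = m^{-1}(i)$, and let $\cA_v$ be the event that $|N(v) \cap V_i| < (1-\nlb)\delta/d$ for some $i \in [d]$. The proof of \cref{lem:partition} shows that, for $C$ large enough depending on $\nlb$ and $\drat$, the Lov\'asz Local Lemma yields
\[
    \pr(\text{no $\cA_v$ occurs}) \ge (1 - 1/\Delta^2)^n \ge \exp(-2n/\Delta^2),
\]
where the last inequality uses $(1-x)^n \ge e^{-2nx}$ for $x\in(0,1/2]$ and $\Delta^2 \ge 2$.

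For each $1 \le i \le d$ and $1 \le j \le t$, let $\cB_{i,j}$ be the event that $|V_i \cap A_j|$ lies outside the window $[(1-\psb)n/(td),\,(1+\psb)n/(td)]$. Since $|V_i \cap A_j| \sim \Bin(n/t, 1/d)$ with mean $n/(td)$, Chernoff's inequality (\cref{thm:chernoff}) gives $\pr(\cB_{i,j}) \le 2 \exp(-\psb^2 n/(3td))$, and a union bound over the $dt$ balance events yields $\pr(\text{some $\cB_{i,j}$ occurs}) \le 2dt \exp(-\psb^2 n/(3td))$. Hence the desired partition exists provided
\begin{equation*}
    \exp(-2n/\Delta^2) > 2dt \exp\bigl(-\psb^2 n/(3td)\bigr),
\end{equation*}
equivalently, $\psb^2 n/(3td) - 2n/\Delta^2 > \log(2dt)$.

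To verify this final inequality, use $\Delta \ge \delta \ge Cd\log d$ together with $n \ge \delta+1 \ge Cd\log d$. For $d \ge 2$ and $C$ chosen large enough in terms of $\psb$ and $t$, one has $\Delta^2 \ge 12td/\psb^2$, so $2/\Delta^2 \le \psb^2/(6td)$ and the left-hand side is at least $\psb^2 n/(6td) \ge \psb^2 C(\log d)/(6t)$. Since $\log(2dt) \le 2\log d + \log t$ for $d \ge 2$, a further enlargement of $C$ (still depending only on $\nlb, \psb, \drat, t$) makes this exceed $\log(2dt)$, completing the proof. The main subtlety is that the LLL lower bound $\exp(-2n/\Delta^2)$ is itself exponentially small in $n$ and must nonetheless dominate the balance failure probability; this works precisely because the hypothesis $\delta \ge Cd\log d$ simultaneously forces $\Delta$ to be large enough to control $2n/\Delta^2$ and $n$ to be large enough to control $\log(2dt)$. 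This obviates any need for a more delicate asymmetric application of LLL in which the balance events would be treated directly as bad events --- an approach that would be much more awkward, since each $\cB_{i,j}$ has dependency degree of order $n\Delta/t$.
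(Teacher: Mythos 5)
Your proof is correct and follows essentially the same route as the paper: reuse the random assignment from \cref{lem:partition}, take the Lov\'asz Local Lemma lower bound $(1-1/\Delta^2)^n$ for the degree event, bound the balance failures by Chernoff plus a union bound, and show the former probability strictly exceeds the latter. The only (immaterial) differences from the paper's proof are cosmetic choices in the algebra — you pass through $\Delta^2 \ge 12td/\psb^2$ directly, while the paper lower-bounds $\Delta^2-1 \ge \delta^2-1 \ge C^2d/2$ and compares $e^{-2n/(C^2d)}$ with the Chernoff tail — and both lead to the same constraint on $C$ depending only on $\nlb,\psb,\drat,t$.
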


\begin{proof}
Let $\{m(v) : \, v\in V\}$ be a family of independent random variables, each
uniformly distributed on $[d]$. For all $1\leq i\leq d$, let
\[
    V_i= \{v\in V: m(v)=i\}.
\]
Let $\mathcal{A}$ be the event that all vertices in $V$ have at least
$(1-\alpha)n/d$ neighbours in each $V_i$, and let $\mathcal{B}$ be the event
that for all $1\leq i\leq d$ and $1\leq j\leq t$, $||V_i\cap A_j|-n/(td)|\leq
\psb n/(td)$. Let $p_1=\pr(\mathcal{A})$ and $p_2=\pr(\mathcal{B})$. In order
to show that $\pr(\mathcal{A}\cap \mathcal{B})>0$, it is enough to show that
$p_1+p_2>1$.
By the partition lemma (\cref{lem:partition}),
there exists $C_1=C_1(\alpha,\drat)$ such that, for $C\geq C_1$, one has
\[
 p_1 \geq \left(1-\frac{1}{\Delta^2}\right)^n
     \ge e^{-n/(\Delta^2-1)}
     \ge e^{-n/(\delta^2-1)}
     \ge e^{-2n/(C^2d)},
\]
where we used the inequality $1-x > e^{-x/(1-x)}$ which holds for $x\in(0,1)$,
and the fact that $\delta^2-1\geq C^2 d^2 {\log^2}{d}-1 \geq C^2 d -1 \geq C^2
d/2$.
On the other hand, for $1\leq i\leq d$ and $1\leq j\leq t$, we have by
Chernoff's bounds (\cref{thm:chernoff}),
\[
    \pr\left( \left| |V_i\cap A_j|- n/(td) \right| \geq \psb n/(td) \right) \leq 2e^{-\psb^2 n/(3td)},
\]
so by the union bound,
\[
    p_2 \geq 1- 2td e^{-\psb^2 n/(3td)}.
\]
Let $C_2=C_2(\psb,t)$ be such that $(\psb^2/(3t)) C^2 - (1+\log(2t)/\log 2 )C-2>0$
for all $C\geq C_2$. Note that $C_2> 6t/\psb^2$. Then, for $C\geq
\max\{C_1,C_2\}$, we obtain (using the inequality $n\geq \delta\geq C d\log
d$),
\begin{align*}
\log(p_1/(1-p_2)) &\geq - \frac{2n}{C^2 d}+\frac{\psb^2n}{3td}-\log(2 t d)
=\frac{n}{d} \left(\frac{\psb^2}{3t}-\frac{2}{C^2}\right)-\log(2td)
\\
&\geq
C\log d \left(\frac{\psb^2}{3t}-\frac{2}{C^2}\right)-\log(2td)
=\frac{\log d}{C} \left(\frac{\psb^2}{3t} C^2 -\frac{\log(2 t d)}{\log d}C -2\right).
\end{align*}
Since
\[
    \frac{\log(2td)}{\log d} = 1+\log(2t)/\log d \leq 1+\log(2t)/\log 2,
\]
we obtain
\[
\log(p_1/(1-p_2)) \geq \frac{\log d}{C} \left(\frac{\psb^2}{3t} C^2
-\left(1+\frac{\log(2t)}{\log 2}\right)C -2\right)>0.
\]
Thus, for $C\geq \max\{C_1,C_2\}$, we obtain $p_1+p_2>1$, as wanted.
\end{proof}

We now embark on the task of identifying basic graph properties that will
guarantee the existence of rigid partitions.
Say that a graph $G=(V,E)$
is an \defn{$R$-expander} if every set $A\subseteq V$ with $|A|\le R$ has $|N(A)|\ge 2|A|$.
We say it
is \defn{$(x,y)$-sparse} if
every set $A\subseteq V$
with $|A|=a\le x$
spans at most $ay$ edges.
Note that if $G$ is $(x,y)$-sparse
then it is $(x',y')$-sparse
for every $x'\le x$ and $y'\ge y$.
Moreover, if $G$ is $(x,y)$-sparse
then every subgraph $G'$ of $G$ is $(x,y)$-sparse.
The next lemma roughly shows that sparse graphs
with sufficiently high minimum degree
are expanders.

\begin{lemma}\label{lemma:sparse_implies_expander}
    Let $G=(V,E)$ be an $(x,y)$-sparse graph with minimum degree $\delta\geq 6y$. Then, $G$ is an $(x/3)$-expander.
\end{lemma}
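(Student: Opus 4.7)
The plan is to proceed by contradiction. Suppose $G$ is not an $(x/3)$-expander. Then there exists a set $A \subseteq V$ with $|A| = a \leq x/3$ and $|N(A)| < 2a$. The goal is to show that the set $B = A \cup N(A)$ spans too many edges to be compatible with the $(x,y)$-sparsity hypothesis.

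First, I would bound the size of $B$: since $N(A)$ is the external neighbourhood, $|B| = |A| + |N(A)| < a + 2a = 3a \leq x$, so the sparsity condition applies to $B$.

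Next, I would count edges via degrees. Every vertex in $A$ has degree at least $\delta$, and each of its incident edges lies entirely in $B$ (its other endpoint is in $A$ or in $N(A)$). Summing degrees over $A$ double-counts edges with both endpoints in $A$ and single-counts edges from $A$ to $N(A)$, giving
\[
2\,|E(A)| + |E(A, N(A))| \;\geq\; a\delta,
\]
and hence
\[
|E(B)| \;\geq\; |E(A)| + |E(A, N(A))| \;\geq\; \tfrac{1}{2}\bigl(2|E(A)| + |E(A,N(A))|\bigr) \;\geq\; \tfrac{a\delta}{2}.
\]
On the other hand, since $|B| < 3a \leq x$, the $(x,y)$-sparsity of $G$ gives $|E(B)| \leq |B|\, y < 3ay$. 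Combining the two bounds yields $a\delta/2 < 3ay$, i.e.\ $\delta < 6y$, contradicting the hypothesis $\delta \geq 6y$.

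There is no real obstacle here; the only thing to double-check is that the factor of $3$ in the expansion radius $x/3$ is exactly what is needed to guarantee $|B| \leq x$, so that the sparsity condition can be invoked on $B$, and that the factor of $6$ in $\delta \geq 6y$ is what balances the double-counting ($1/2$) against the neighbourhood blow-up ($3$).
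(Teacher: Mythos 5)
Your proof is correct and follows essentially the same approach as the paper: contradiction, consider $B=A\cup N(A)$, bound $|E(B)|$ from below by a degree sum over $A$ and from above by the sparsity condition, and compare. You simply spell out the double-counting step ($2|E(A)| + |E(A,N(A))| \ge a\delta$) that the paper states more compactly as $|E(B)|\ge \frac12\sum_{v\in A}\deg(v)$.
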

\begin{proof}
  Let $A\subseteq V$ with $|A|\le x/3$,
  and let $B=A\cup N(A)$.
  Assume for contradiction that
  $|N(A)|< 2|A|$. 
  Then $|B|<3|A|\le x$,
  and therefore, since $G$ is $(x,y)$-sparse,
  we have
  \[
    |E(B)|\le |B|y < 3|A|y.
  \]
  On the other hand,
  since $\delta(G)=\delta\ge 6y$,
  we have
  \[
    |E(B)|\ge \frac{1}{2}\sum_{v\in A} \deg(v) 
    \ge \delta|A|/2
    \ge 3|A|y,
  \]
    a contradiction.
\end{proof}

Say that a graph $G=(V,E)$
is a \defn{$K$-connector} if every two disjoint vertex sets of size at least $K$
are connected by an edge. Similarly, if $G=(V_1\cup V_2,E)$ is a bipartite graph, we say that $G$ is a \defn{$K$-bi-connector} if  every $A_1\subseteq V_1$ and $A_2\subseteq V_2$,
both of size at least $K$, are connected by an edge. Note that if $G$ is a $K$-connector, then it is also a $K'$-connector for any $K'>K$. Moreover, if $G$ is a $K$-connector, then, for every $U\subseteq V$, $G[U]$ is also a $K$-connector, and for every two disjoint sets $U_1,U_2\subseteq V$, $G[U_1,U_2]$ is a $K$-bi-connector. 
Finally, note that the property of being a $K$-connector is monotone increasing.
The next lemma roughly shows that
graphs that are both expanders and connectors
are connected.

\begin{lemma}\label{lemma:expander+connector}
  Let $G$ be an $R$-expander. Then, if $G$ is a $3R$-connector, $G$ is connected.
  Similarly, if $G$ is bipartite and a $2R$-bi-connector, then $G$ is connected.
\end{lemma}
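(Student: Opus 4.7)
My plan is to prove both parts by contradiction, assuming $G$ is disconnected and then using the expansion hypothesis to force every connected component to be large enough that the (bi-)connector property yields an edge across components.

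For the first statement, suppose $G$ has a connected component $C$ and let $C'$ be any connected component of $V \smallsetminus C$. Both are non-empty, and there are no edges between them. I will show that each has size at least $3R$, contradicting the $3R$-connector hypothesis. If $|C| \le R$, then $R$-expansion gives $|N(C)| \ge 2|C|$; but since $C$ is a connected component we have $N(C) = \es$, forcing $|C| = 0$, a contradiction. Hence $|C| > R$, so I pick any subset $A \subseteq C$ with $|A| = R$, observe that $N(A) \subseteq C \smallsetminus A$, and apply $R$-expansion to get $|C| \ge |A| + |N(A)| \ge R + 2R = 3R$. The same argument on $C'$ gives $|C'| \ge 3R$, contradicting the $3R$-connector property.

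For the bipartite statement, I mirror the strategy but must work with the bipartition. Write $C_i = C \cap V_i$ for each component $C$, and note that since $G$ is bipartite, $N(C_1) \subseteq V_2 \cap C = C_2$ and symmetrically $N(C_2) \subseteq C_1$. The key step is to show that $|C_1|, |C_2| \ge 2R$ for every connected component $C$; applying this to two distinct components $C, C'$ then produces disjoint sets $C_1 \subseteq V_1$ and $C'_2 \subseteq V_2$, both of size at least $2R$, with no edge between them, contradicting the $2R$-bi-connector property.

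To establish the lower bounds on $|C_1|$ and $|C_2|$, I split into cases. If both $|C_1|, |C_2| < R$, the $R$-expansion hypothesis yields $|C_2| \ge 2|C_1|$ and $|C_1| \ge 2|C_2|$, forcing $|C_1| = |C_2| = 0$ and so $C = \es$, which is absurd. Hence (without loss of generality) $|C_1| \ge R$; taking a subset $A \subseteq C_1$ of size $R$ and using $N(A) \subseteq C_2$ gives $|C_2| \ge 2R \ge R$. Now the same trick applied with a subset $B \subseteq C_2$ of size $R$ and $N(B) \subseteq C_1$ yields $|C_1| \ge 2R$.

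The only mildly tricky point is the bipartite case: I must iterate the expansion argument once on each side to lift both $|C_1|$ and $|C_2|$ past the $2R$ threshold needed by the bi-connector hypothesis. Aside from this small bootstrapping, the proof is a straightforward combination of the two hypotheses and the observation that no edges leave a connected component.
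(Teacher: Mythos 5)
Your proof is correct and follows essentially the same strategy as the paper: use expansion to show every connected component is large (of size at least $3R$ in the general case, with at least $2R$ vertices on each side in the bipartite case), and then invoke the (bi-)connector property to rule out the coexistence of two such components. The only difference is that you spell out the bootstrapping argument for the bipartite size bounds, which the paper's proof states without detail.
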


\begin{proof}
  Note that for any $R$-expander $G$, every connected component of $G$ is of size at least $3R$.
  On the other hand, if $G$ is a $K$-connector,
  any connected component except possibly the largest one must have less than $K$ vertices.
  Therefore, any $R$-expander that is also a $3R$-connector must be connected. 

  Now, assume that $G$ is bipartite with parts $V_1$ and $V_2$.
  Note that if $G$ is an $R$-expander,
  then every connected component of $G$ has at least $2R$ elements in each $V_i$.
  On the other hand, if $G$ is a $K$-bi-connector,
  any connected component expect possibly the largest one has less than $K$ elements in one of $V_1,V_2$.
  Thus, a bipartite $R$-expander that is also a $(2R)$-bi-connector is connected.
\end{proof}

Using the last two lemmas,
we now show that in sparse connectors,
induced subgraphs with sufficiently large
minimum degree are connected.
\begin{lemma}\label{lemma:sparse+connector+minimum_degree}
  Let $G=(V,E)$ be an $(x,y)$-sparse graph
  that is also a $(2x/3)$-connector.
  Let $U_1,U_2\subseteq V$
  such that either $U_1=U_2$
  or $U_1\cap U_2=\es$,
  and assume that $\delta(G[U_1,U_2])\ge 6y$.
  Then, $G[U_1,U_2]$ is connected.
\end{lemma}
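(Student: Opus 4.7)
The plan is to apply \cref{lemma:sparse_implies_expander,lemma:expander+connector} to the graph $H := G[U_1,U_2]$, splitting into the two cases allowed by the hypothesis.

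Since $H$ is a subgraph of $G$, it inherits the $(x,y)$-sparseness of $G$. Combined with the hypothesis $\delta(H)\ge 6y$, \cref{lemma:sparse_implies_expander} yields that $H$ is an $(x/3)$-expander. So it remains only to establish a suitable connector property and invoke \cref{lemma:expander+connector} with $R=x/3$.

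In the case $U_1 = U_2 = U$, the graph $H = G[U]$ inherits the $(2x/3)$-connector property of $G$ (any two disjoint vertex sets inside $U$ of size at least $2x/3$ are connected by an edge of $G$, which necessarily lies in $G[U]$). Using monotonicity of the connector property in $K$, $H$ is also a $(3R) = x$-connector, so the first assertion of \cref{lemma:expander+connector} gives that $H$ is connected.

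In the case $U_1 \cap U_2 = \es$, the graph $H = G[U_1,U_2]$ is bipartite with parts $U_1,U_2$. Given $A_1 \subseteq U_1$ and $A_2 \subseteq U_2$ each of size at least $2x/3$, the sets $A_1,A_2$ are disjoint in $V$, so the $(2x/3)$-connector property of $G$ provides an edge of $G$ between them, which automatically belongs to $E(U_1,U_2)$. Hence $H$ is a $(2x/3) = (2R)$-bi-connector, and the second assertion of \cref{lemma:expander+connector} gives that $H$ is connected. No genuine obstacle is expected here; the only minor point to state carefully is that connector/bi-connector properties transfer to induced (resp.\ bipartite-induced) subgraphs, which is immediate from the definitions.
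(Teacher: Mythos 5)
Your proof is correct and follows essentially the same route as the paper's: inherit $(x,y)$-sparseness to deduce via \cref{lemma:sparse_implies_expander} that $G[U_1,U_2]$ is an $(x/3)$-expander, then case on $U_1=U_2$ vs.\ $U_1\cap U_2=\es$ and invoke the connector/bi-connector parts of \cref{lemma:expander+connector}. No differences worth noting.
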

\begin{proof}
    Let $G'=G[U_1,U_2]$. Note that any subgraph of an $(x,y)$-sparse graph is
    also $(x,y)$-sparse. In particular, $G'$ is $(x,y)$-sparse. Since the
    minimum degree of $G'$ is at least $6y$,
    then by \cref{lemma:sparse_implies_expander} $G'$ is an $(x/3)$-expander. 

    We now divide into two cases:
    if $U_1=U_2$ then, since $G$ is a $(2x/3)$-connector, so is $G'=G[U_1]$. In particular, $G'$ is an $x$-connector,
    so by \cref{lemma:expander+connector} it is connected.
    If $U_1\cap U_2=\es$, then, since $G$ is a $(2x/3)$-connector, $G'=G[U_1,U_2]$ is a $(2x/3)$-bi-connector.
    Thus, by \cref{lemma:expander+connector}, it is connected.
\end{proof}

We now deduce from the last lemma
and the partition lemma (\cref{lem:partition})
a useful sufficient condition
for the admittance of {\em strong}
rigid partitions.
\begin{proposition}\label{prop:sparseconnector:is:rigid}
  For every $\drat\geq 1$ there exists $C=C(\drat)>1$ such that the following holds.
  Let $d\geq 2$ and let $G=(V,E)$ be a graph
  with minimum degree $\delta\geq C d \log d$ and maximum degree $\Delta\leq \drat \delta$.
  Assume that $G$ is $(x,\delta/(7d))$-sparse and a $(2x/3)$-connector, for some $x$.
  Then $G$ admits a strong $d$-rigid partition.
\end{proposition}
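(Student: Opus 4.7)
My plan is to construct the strong $d$-rigid partition by a single application of the partition lemma (\cref{lem:partition}), then to verify the connectivity of each induced subgraph $G[V_i,V_j]$ by appealing directly to \cref{lemma:sparse+connector+minimum_degree}. The key observation is that the sparsity parameter $y=\delta/(7d)$ in the hypothesis has been chosen precisely so that the threshold $6y=6\delta/(7d)$ required by \cref{lemma:sparse+connector+minimum_degree} matches the guarantee $(1-\nlb)\delta/d$ of the partition lemma when $\nlb=1/7$.

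Concretely, I would fix $\nlb=1/7$ and take $C(\drat):=C_{\text{partition}}(1/7,\drat)$, where $C_{\text{partition}}$ is the constant produced by \cref{lem:partition}. Given $G$ satisfying the hypotheses of the proposition, I would then apply \cref{lem:partition} to obtain a partition $V=V_1\cup\cdots\cup V_d$ in which every vertex $v\in V$ has at least $(6/7)\delta/d=6\delta/(7d)$ neighbours in $V_i$, for each $1\le i\le d$.

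To show that $V_1,\ldots,V_d$ is a (type I) strong $d$-rigid partition, I need to check that $G[V_i,V_j]$ is connected for every $1\le i\le j\le d$. Fix such a pair. When $i=j$, the partition guarantee tells us that each $v\in V_i$ has at least $6\delta/(7d)$ neighbours in $V_i$, so $\delta(G[V_i])\ge 6y$. When $i<j$, the guarantee gives each $v\in V_i$ at least $6\delta/(7d)$ neighbours in $V_j$ and, symmetrically, each $v\in V_j$ at least that many neighbours in $V_i$, so $\delta(G[V_i,V_j])\ge 6y$. In either case, since $G$ is $(x,y)$-sparse and a $(2x/3)$-connector, \cref{lemma:sparse+connector+minimum_degree} applies and yields that $G[V_i,V_j]$ is connected.

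I do not anticipate a genuine obstacle here: the proof is essentially an alignment of constants between the two lemmas already proved in this section. The only thing to be a little careful about is the case $i=j$, where one must invoke the ``$U_1=U_2$'' branch of \cref{lemma:sparse+connector+minimum_degree} rather than its bipartite branch, and to verify that the sparsity/connector hypotheses on $G$ are inherited appropriately. Once both cases are handled uniformly by \cref{lemma:sparse+connector+minimum_degree}, the partition $V_1,\ldots,V_d$ is a strong $d$-rigid partition of $G$, as required.
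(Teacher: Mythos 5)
Your proposal is correct and follows essentially the same argument as the paper: fix $\nlb=1/7$, apply the partition lemma with $C=C(1/7,\drat)$ to get a partition in which every $G[V_i,V_j]$ has minimum degree at least $6\delta/(7d)=6y$, then invoke \cref{lemma:sparse+connector+minimum_degree} (handling $i=j$ and $i<j$ through its two branches) to conclude each $G[V_i,V_j]$ is connected.
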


\begin{proof}
    Let $\nlb=1/7$. By the partition lemma (\cref{lem:partition}),
    there is $C=C(\nlb,\drat)=C(\drat)$
    such that if $\delta\geq C d \log d$ and $\Delta\leq \drat \delta$,
    there exists a partition $V=V_1\cup\cdots \cup V_d$
    such that the minimum degree of  $G[V_i,V_j]$
    is at least $(1-\nlb)\delta/d=6\delta/(7d)$ for all $1\leq i,j\leq d$.
    By \cref{lemma:sparse+connector+minimum_degree},
    $G[V_i,V_j]$ is connected for all $1\leq i,j\leq d$, as wanted.
\end{proof}

In order to address the bipartite scenario,
we will utilize the following simple lemma.

\begin{lemma}\label{lem:paths_DFS}
  Let $m,K \geq 1$ be integers,
  and let $G$ be a bipartite graph with sides $A,B$ of size at least $m$ each.
  Suppose further that $G$ is $K$-bi-connector.
  Then, $G$ admits a matching of size $m-K+1$.
\end{lemma}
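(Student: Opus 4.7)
The plan is to argue via a maximal matching and apply the $K$-bi-connector property to what is left over. First I would take any maximal (inclusion-wise, not necessarily maximum) matching $M$ in $G$, and set $A' = A \setminus V(M)$ and $B' = B \setminus V(M)$, the unmatched vertices on the two sides. By maximality of $M$, there cannot be any edge of $G$ between $A'$ and $B'$: any such edge could be appended to $M$, contradicting its maximality.

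Next I would invoke the $K$-bi-connector hypothesis on the pair $(A', B')$: if both $|A'| \geq K$ and $|B'| \geq K$, the hypothesis would guarantee an edge between them. Hence $\min\{|A'|, |B'|\} < K$. Since $|A'| = |A| - |M| \geq m - |M|$ and likewise $|B'| \geq m - |M|$, this forces $m - |M| < K$, i.e., $|M| \geq m - K + 1$, which is exactly the desired bound.

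There is no real obstacle here; the argument reduces to a single line of arithmetic once the maximal matching and the bi-connector property are combined. An entirely equivalent packaging would go through K\"onig's theorem: a minimum vertex cover $C = C_A \cup C_B$ leaves $A \setminus C_A$ and $B \setminus C_B$ edge-free, so the bi-connector property forces one of these two complements to have size less than $K$, yielding $|C_A| + |C_B| \geq m - K + 1$, which is then the size of a maximum matching. The name of the lemma hints at a DFS-type construction (perhaps anticipating a companion result that produces a long path), but for the matching bound as stated the plain maximal-matching argument suffices.
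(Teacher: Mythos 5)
Your argument is exactly the paper's: take a maximal matching $M$, observe that the unmatched vertices $A'$, $B'$ span no edge by maximality, and invoke the $K$-bi-connector property to force $\min\{|A'|,|B'|\}<K$, whence $|M|\geq m-K+1$. The König remark is a harmless repackaging of the same idea.
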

\begin{proof}
    Let $M$ be a maximal matching in $G$. Assume for contradiction that $|M|\leq m-K$. Let $A'\subseteq A$ and $B'\subseteq B$ be the vertices not incident to any edge of $M$. Then, we have $|A'|=|B'|\geq K$. Since $G$ is a $K$-bi-connector, there exists an edge in $G$ connecting a vertex $u\in A'$ and a vertex $v\in B'$. But then $M\cup\{\{u,v\}\}$ is a matching containing $M$, in contradiction to the maximality of $M$.
\end{proof}

\begin{proposition}\label{prop:sparseconnector:is:bipartite:rigid}
  For every $\drat\geq 1$ there exists $C=C(\drat)>1$ such that the following holds.
  Let $d\geq 2$, write $d'=d+1$, and let $G=(V,E)$ be a bipartite graph with bipartition $(A,B)$
  with $|A|=|B|$,
  minimum degree $\delta\geq C d' \log d'$ and maximum degree $\Delta\leq \drat \delta$.
  Assume that $d' \le \sqrt{3|A|/4}$,
  and that $G$ is $(x,\delta/(7d'))$-sparse and a $(2x/3)$-bi-connector
  for $x\le|A|/(7d')$.
  Then $G$ admits a strong bipartite $d$-rigid partition.
\end{proposition}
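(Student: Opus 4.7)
The approach closely parallels that of \cref{prop:sparseconnector:is:rigid}, but is refined to yield a strong \emph{bipartite} $d$-rigid partition via \cref{lem:matching:bipartite}. Write $d'=d+1$. The plan is to first invoke the balanced partition lemma (\cref{lem:partition:balanced}) with $t=2$ colour classes $A$ and $B$, applied with the parameter $d'$ in place of $d$ and with constants $\nlb=1/7$ and some small $\psb$ (say $\psb=1/21$). Taking $C$ at least as large as the constant $C(\nlb,\psb,\drat,2)$ produced by that lemma, this yields a partition $V=V_1\cup\cdots\cup V_{d'}$ such that every vertex has at least $(1-\nlb)\delta/d'=6\delta/(7d')$ neighbours in each $V_i$, and, writing $A_i=V_i\cap A$ and $B_i=V_i\cap B$, one has $(1-\psb)|A|/d'\le |A_i|,|B_i|\le (1+\psb)|A|/d'$ for every~$i$.

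Next, I would verify that $G[A_i,B_j]$ is connected for every $i\ne j$. Because $G$ is bipartite with sides $A,B$, every neighbour in $V_j$ of a vertex in $A_i$ must lie in $B_j$; thus by the balanced partition output, $\delta(G[A_i,B_j])\ge 6\delta/(7d')=6y$ with $y=\delta/(7d')$. Since $G[A_i,B_j]$ inherits $(x,y)$-sparsity from $G$, \cref{lemma:sparse_implies_expander} shows it is an $(x/3)$-expander. Moreover, $G[A_i,B_j]$ inherits the $(2x/3)$-bi-connector property from $G$ (as $A_i\subseteq A$ and $B_j\subseteq B$), so the bipartite clause of \cref{lemma:expander+connector} gives connectivity.

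Then, for each $i$, I would show that $G[A_i,B_i]$ admits a matching of size $d$. By the balanced partition, both $|A_i|,|B_i|\ge (1-\psb)|A|/d'$, and $G[A_i,B_i]$ inherits the $(2x/3)$-bi-connector property. \cref{lem:paths_DFS} then supplies a matching of size at least $(1-\psb)|A|/d'-2x/3+1$, which (using $x\le |A|/(7d')$ and $\psb=1/21$) is at least $(6/7)|A|/d'+1$. The hypothesis $d'\le\sqrt{3|A|/4}$, i.e.\ $|A|\ge \tfrac{4}{3}(d')^2$, readily implies that this lower bound exceeds $d'-1=d$. Consequently, \cref{lem:matching:bipartite} certifies that $V_1,\ldots,V_{d'}$ is a strong bipartite $d$-rigid partition of $G$.

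The main work is bookkeeping rather than any single deep step: one must choose $\nlb$ and $\psb$ so that (i) the output of the balanced partition lemma provides the minimum-degree bound $6y$ needed to run the sparse-expander argument with $y=\delta/(7d')$, and (ii) the $(1-\psb)|A|/d'$ available in each part, after subtracting the $2x/3$ bi-connector threshold, still leaves room for a matching of size at least $d$. The hypothesis $d'\le\sqrt{3|A|/4}$ is calibrated precisely so that (ii) holds; granted (i), the conclusion is a direct assembly of the lemmas developed earlier in this section.
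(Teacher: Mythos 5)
Your proposal is correct and follows essentially the same route as the paper's proof: apply the balanced partition lemma with $t=2$ and parameter $d'=d+1$, use $(x,y)$-sparsity plus the bi-connector property to show each $G[A_i,B_j]$ ($i\ne j$) is an expander and hence connected via \cref{lemma:expander+connector}, then extract a matching of size $>d$ in each $G[A_i,B_i]$ via \cref{lem:paths_DFS} and conclude with \cref{lem:matching:bipartite}. The only cosmetic difference is your choice $\psb=1/21$ where the paper takes $\psb=1/7$ (yielding $|A_i|,|B_i|\ge 6|A|/(7d')$ directly); both choices give more than enough slack against the $2x/3$ loss and the hypothesis $|A|\ge\tfrac{4}{3}(d')^2$.
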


\begin{proof}
  By the balanced partition lemma (\cref{lem:partition:balanced}),
  there is $C=C(\drat)$
  such that if $\delta \ge Cd'\log{d'}$
  and $\Delta \le \drat\delta$,
  there exists a partition $V=V_1\cup\dots \cup V_d\cup V_{d+1}$
  with the following properties.
  Write $A_i=V_i\cap A$ and $B_i=V_i\cap B$.
  Then, $\delta(G[V_i,V_j])\ge \frac{6\delta}{7d'}$ for all $1\le i<j\le d+1$,
  and hence $\delta(G[A_i,B_j])\ge \frac{6\delta}{7d'}$ for all $i\ne j$ in $[d+1]$.
  In addition,
  $|A_i|,|B_i| \ge \frac{6|A|}{7d'} $ for all $1\le i\le d+1$.
  Fix $i\ne j$ and let $G'=G[A_i,B_j]$.
  Repeating the argument in the proof of \cref{lemma:sparse+connector+minimum_degree},
  we observe that by \cref{lemma:sparse_implies_expander}, $G'$ is an $(x/3)$-expander;
  and by \cref{lemma:expander+connector}, it is connected.
  Let $G^*=G[A_i,B_i]$.
  Since $G^*$ is $(2x/3)$-bi-connector,
  it follows from \cref{lem:paths_DFS} that it contains a matching of size
  at least $\frac{6|A|}{7d'}-\frac{2x}{3} > \frac{3|A|}{4d'} > d$.
  Thus, by \cref{lem:matching:bipartite},
  $G$ admits a strong bipartite $d$-rigid partition.
\end{proof}

\section{Pseudorandom and random graphs}\label{sec:randomgraphs}
\subsection{Jumbled graphs}\label{sec:jumbled}
Roughly speaking,
pseudorandom graphs are graphs with properties that one expects to find in random graphs.
One such property, captured by the notion of ``jumbledness'',
originally introduced by Thomason~\cites{Tho87a,Tho87b}\footnote{%
Here, we do not use the original definition of Thomason,
but rather a similar definition that appears in~\cite{ConPG}.}
is that the graph has essentially the same edge density between any two (sufficiently large) vertex sets.
Formally,
we say that a graph $G=(V,E)$ is \defn{$(p,\beta)$-jumbled}
if it satisfies,
for all $A,B\subseteq V$,
\[
  \left|e(A,B)- p|A||B|\right|
  \le \beta \sqrt{|A||B|}.
\]
In the next two lemmas
we show that jumbled graphs are sparse connectors.
\begin{lemma}\label{lem:jumbled:is:sparse}
  A $(p,\beta)$-jumbled graph is $(x,(px+\beta)/2)$-sparse
  for every positive integer $x$.
\end{lemma}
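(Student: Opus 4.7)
The plan is to apply the jumbledness condition with $A = B$. Recall that the quantity $e(A,A)$ counts ordered pairs $(u,v) \in A \times A$ with $\{u,v\} \in E$, so each edge inside $A$ is counted twice, giving $e(A,A) = 2|E(A)|$.

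Fix $A \subseteq V$ with $|A| = a \le x$. By the $(p,\beta)$-jumbledness condition applied to the pair $(A,A)$, we have
\[
  \bigl| 2|E(A)| - p a^2 \bigr| \;=\; \bigl| e(A,A) - p|A||A| \bigr| \;\le\; \beta \sqrt{|A||A|} \;=\; \beta a.
\]
Dropping the absolute value on the upper side gives $2|E(A)| \le p a^2 + \beta a = a(pa + \beta)$. Since $a \le x$, this is bounded above by $a(px + \beta)$, and dividing by $2$ yields
\[
  |E(A)| \;\le\; a\cdot \frac{px + \beta}{2},
\]
which is exactly the $(x,(px+\beta)/2)$-sparsity condition. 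There is no real obstacle here; the entire argument is a one-line substitution into the jumbledness inequality, and the main thing to be careful about is the factor of $2$ coming from the convention that $e(A,A)$ counts ordered pairs while $|E(A)|$ counts (unordered) edges.
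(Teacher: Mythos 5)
Your proof is correct and follows the same approach as the paper's: apply the jumbledness inequality with $A=B$, use the convention $e(A,A)=2|E(A)|$, and bound $|A|\le x$. The paper's version is just a one-line condensation of the same argument.
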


\begin{proof}
  Let $x$ be a positive integer,
  and let $A\subseteq V$ with $|A|\le x$.
  Then,
  \[
    2|E(A)|=e(A,A)
    \le p|A|^2+\beta|A| \le |A|(px+\beta).\qedhere
  \]
\end{proof}

\begin{lemma}\label{lem:jumbled:is:connector}
  A $(p,\beta)$-jumbled graph is $K$-connector
  for every $K>\beta/p$.
\end{lemma}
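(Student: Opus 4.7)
The plan is to argue by contradiction, directly applying the jumbledness inequality to a pair of disjoint sets that would witness a failure of the $K$-connector property.

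Specifically, suppose $G=(V,E)$ is $(p,\beta)$-jumbled, fix $K>\beta/p$, and suppose for contradiction that $G$ is \emph{not} a $K$-connector. Then there exist disjoint vertex sets $A,B\subseteq V$ with $|A|,|B|\ge K$ such that $E(A,B)=\es$. Since $A$ and $B$ are disjoint, this gives $e(A,B)=0$, and the jumbledness condition applied to the pair $(A,B)$ yields
\[
p|A||B| = \bigl|e(A,B)-p|A||B|\bigr| \le \beta\sqrt{|A||B|}.
\]

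Dividing both sides by $p\sqrt{|A||B|}$ (which is positive since $|A|,|B|\ge K\ge 1$) gives $\sqrt{|A||B|}\le \beta/p$. But $|A|,|B|\ge K$, so $\sqrt{|A||B|}\ge K>\beta/p$, a contradiction. This completes the proof.

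There is no real obstacle here: the statement is essentially a one-line consequence of the definition of jumbledness applied to a would-be empty bipartite cut. The only subtle point to keep in mind is that the definition of $e(A,B)$ in the paper counts ordered pairs, so for \emph{disjoint} $A,B$ one has $e(A,B)=|E(A,B)|$ and the absence of edges really does force $e(A,B)=0$ (no double counting issues arise).
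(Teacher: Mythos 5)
Your proof is correct and is essentially the same argument as the paper's: both apply the jumbledness inequality directly to a putative empty bipartite cut between disjoint sets of size at least $K$ and derive a contradiction (the paper phrases it affirmatively, showing $e(A,B)\ge K(pK-\beta)>0$ for $|A|=|B|=K$, while you take the contrapositive; these are the same calculation). Your remark about $e(A,B)=|E(A,B)|$ for disjoint sets is a nice bit of care but not a substantive difference.
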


\begin{proof}
  Let $A,B\subseteq V$ with $A\cap B=\es$
  and $|A|=|B|=K>\beta/p$.
  Then,
  \[
    |E(A,B)|=e(A,B)\ge p|A||B|-\beta\sqrt{|A||B|}
    = K(pK-\beta)>0.\qedhere
  \]
\end{proof}

\begin{proposition}\label{prop:jumbled:is:rigid}
  For every $\drat$ there exists $C=C(\drat)>1$ such that
  the following holds.
  Let ${d\ge 2}$,
  and let $G=(V,E)$ be a $(p,\beta)$-jumbled graph
  with minimum degree $\delta\ge \max\{9d \beta,Cd\log{d}\}$ and
  maximum degree $\Delta\le \drat\delta$.
  Then $G$ admits a strong $d$-rigid partition.
\end{proposition}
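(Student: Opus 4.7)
The plan is to derive \cref{prop:jumbled:is:rigid} directly from \cref{prop:sparseconnector:is:rigid} by using \cref{lem:jumbled:is:sparse,lem:jumbled:is:connector} to verify its sparsity and connectivity hypotheses. I would take the constant $C$ to be the one furnished by \cref{prop:sparseconnector:is:rigid} for the given $\Gamma$. The degree bounds $\delta \geq Cd\log d$ and $\Delta \leq \Gamma\delta$ transfer over directly from the hypotheses of our proposition, so the only remaining task is to exhibit some value of $x$ for which $G$ is simultaneously $(x, \delta/(7d))$-sparse and a $(2x/3)$-connector.

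First I would translate each desired conclusion into a constraint on $x$. By \cref{lem:jumbled:is:sparse}, $G$ is $(x, (px+\beta)/2)$-sparse, so the sparsity hypothesis holds as soon as $(px+\beta)/2 \leq \delta/(7d)$, i.e., $x \leq (2\delta/(7d) - \beta)/p$. By \cref{lem:jumbled:is:connector}, $G$ is a $K$-connector for every $K > \beta/p$, so the connector hypothesis holds as soon as $2x/3 > \beta/p$, i.e., $x > 3\beta/(2p)$. A short arithmetic check shows that the interval $(3\beta/(2p), (2\delta/(7d) - \beta)/p]$ is non-empty exactly when $\delta > 35d\beta/4$; the assumption $\delta \geq 9d\beta = 36d\beta/4$ crosses this threshold, which is precisely where the constant $9$ in the statement of the proposition comes from.

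To finish the proof I would pick any $x$ in that interval — for instance, $x = 11\beta/(7p)$, which yields $(px+\beta)/2 = 9\beta/7 \leq \delta/(7d)$ and $2x/3 = 22\beta/(21p) > \beta/p$ — and then invoke \cref{prop:sparseconnector:is:rigid} to conclude that $G$ admits a strong $d$-rigid partition. Before doing so I would quickly dispatch the two degenerate cases: if $p=0$, combining $(0,\beta)$-jumbledness (applied to $A=B=V$) with the edge-count lower bound $2|E| \geq \delta|V|$ forces $\delta \leq \beta$, which together with $\delta \geq 9d\beta$ and $d \geq 2$ forces $\beta = 0$ and hence $\delta = 0$, contradicting $\delta \geq Cd\log d$; and if $\beta = 0$ one may simply take any positive $x \leq 2\delta/(7dp)$.

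I do not anticipate any serious obstacle: the argument is essentially an algebraic matching of constants between \cref{prop:jumbled:is:rigid,prop:sparseconnector:is:rigid} via the two jumbled-graph lemmas, and the tight constant $9$ has a transparent origin in the sparsity-versus-connector trade-off for the parameter $x$.
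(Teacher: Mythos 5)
Your proof is correct and follows essentially the same route as the paper's: set $x=11\beta/(7p)$, verify via Lemmas~\ref{lem:jumbled:is:sparse} and~\ref{lem:jumbled:is:connector} that $G$ is $(x,\delta/(7d))$-sparse and a $(2x/3)$-connector, and invoke Proposition~\ref{prop:sparseconnector:is:rigid} with its constant $C(\Gamma)$. Your interval analysis explaining the origin of the constant $9$ and the treatment of the degenerate cases $p=0$ and $\beta=0$ are minor elaborations the paper leaves implicit.
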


\begin{proof}
   Let $x=11\beta/(7p)$.
   Note that $\delta/(7d)\geq 9\beta/7 = (px+\beta)/2$. Hence, by \cref{lem:jumbled:is:sparse},
  $G$ is $(x,\delta/(7d))$-sparse.
  Since $2x/3 > \beta/p$, by \cref{lem:jumbled:is:connector}
  $G$ is a $(2x/3)$-connector.
Let $C=C(\drat)$ be the constant from \cref{prop:sparseconnector:is:rigid}. Since $\delta\geq C d \log d$ and $\Delta\leq \drat\delta$, by \cref{prop:sparseconnector:is:rigid}, $G$ admits a strong $d$-rigid partition.
\end{proof}

To prove \cref{thm:nkl}, we use the Expander Mixing Lemma, stated below.

\begin{theorem}[\cite{AC88}; see also \cite{HLW06}*{Lemma~2.5}]\label{thm:xml}
  Every $(n,k,\lambda)$-graph
  is $(k/n,\lambda)$-jumbled.
\end{theorem}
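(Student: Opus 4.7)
The plan is the standard spectral-decomposition argument, applied to the adjacency matrix of the $(n,k,\lambda)$-graph. Let $M$ be the adjacency matrix of $G$. For $A,B\subseteq V$, note that $e(A,B)=\mathbf{1}_A\trans M\,\mathbf{1}_B$, where $\mathbf{1}_A,\mathbf{1}_B\in\RR^n$ are the indicator vectors. Our goal is to show that this bilinear form is close to $(k/n)|A||B|$.

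The first step is to decompose each indicator vector into a component along the all-ones direction and a component orthogonal to it. Since $G$ is $k$-regular, the all-ones vector $\mathbf{1}$ is an eigenvector of $M$ with eigenvalue $k$, and $\mathbf{1}/\sqrt{n}$ is a unit eigenvector. Write $\mathbf{1}_A=(|A|/n)\mathbf{1}+f_A$ and $\mathbf{1}_B=(|B|/n)\mathbf{1}+f_B$, where $f_A,f_B\perp\mathbf{1}$. Because $M\mathbf{1}=k\mathbf{1}$, the cross terms vanish and we obtain
\[
  e(A,B)=\mathbf{1}_A\trans M\mathbf{1}_B=\frac{|A||B|}{n^2}\mathbf{1}\trans M\mathbf{1}+f_A\trans M f_B=\frac{k}{n}|A||B|+f_A\trans M f_B.
\]

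The second step is to bound the error $|f_A\trans M f_B|$ using the spectral gap. On the orthogonal complement of $\mathbf{1}$, every eigenvalue of $M$ has absolute value at most $\lambda$ (by definition of an $(n,k,\lambda)$-graph), so the operator norm of $M$ restricted to $\mathbf{1}^\perp$ is at most $\lambda$. By Cauchy--Schwarz,
\[
  |f_A\trans M f_B|\le\lambda\,\|f_A\|\,\|f_B\|.
\]

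Finally, a direct computation using orthogonality gives $\|f_A\|^2=\|\mathbf{1}_A\|^2-|A|^2/n=|A|-|A|^2/n\le|A|$, and similarly $\|f_B\|^2\le|B|$. Combining these estimates yields
\[
  \bigl|e(A,B)-(k/n)|A||B|\bigr|\le\lambda\sqrt{|A||B|},
\]
which is precisely the $(k/n,\lambda)$-jumbledness condition. No step here is really an obstacle; the only mild subtlety is being careful that the paper's definition of $e(A,B)$ counts ordered pairs, which matches $\mathbf{1}_A\trans M\mathbf{1}_B$ exactly (so no factor of $2$ appears, even when $A=B$).
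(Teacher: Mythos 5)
Your proof is correct, and it is the standard spectral decomposition argument for the Expander Mixing Lemma — exactly the argument in the cited sources (the paper itself just cites~\cite{AC88} and~\cite{HLW06} rather than reproving it). Your observation that the paper's ordered-pair definition of $e(A,B)$ matches $\mathbf{1}_A\trans M\,\mathbf{1}_B$ with no factor of $2$ is the right thing to check, and it holds.
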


\Cref{thm:nkl} follows immediately:
\begin{proof}[Proof of \cref{thm:nkl}]
  Let $G$ be an $(n,k,\lambda)$-graph.
  By the Expander Mixing Lemma (\cref{thm:xml})
  $G$ is $(k/n,\lambda)$-jumbled.
  The result follows from \cref{prop:jumbled:is:rigid} by taking $C=C(1)$.
\end{proof}

\subsection{Spread measures}
Following (and slightly extending) terminology from~\cite{ALWZ21},
we say that a distribution $\cG$ of $n$-vertex graphs is \defn{$(p;Z)$-spread}
if for every set $E_0$ of pairs of vertices with $|E_0|=m\le Z$
we have that $\pr(E_0\subseteq E(\cG))\le p^m$.
We say that $\cG$ is \defn{$p$-spread} if it is $(p;\binom{n}{2})$-spread.
This section is dedicated to proving that spread measures of sufficiently high density
are (locally and relatively) ``sparse''.
The following lemma,
which will be instrumental in the next few sections,
makes this concept precise.

\begin{lemma}\label{lem:spread:sparse}
  For every $\Lambda>0$ there exists $k>0$ such that the following holds.
  Suppose $p\ge k/n$,
  and let $\cG$ be a $(p;Z)$-spread distribution of $n$-vertex graphs.
  Set 
  $x=2\Lambda e^{-(1+1/\Lambda )}\log(np)/p$
  and $y=\Lambda \log(np)$,
  and assume that $xy\le Z$.
  Then,
  $G\sim\cG$ is \whp{}
  $(x,y)$-sparse
\end{lemma}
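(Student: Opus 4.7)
The plan is a direct first-moment / union bound argument. A graph $G$ fails to be $(x,y)$-sparse precisely when there exist $a \in \{2y+1,\ldots,x\}$ and a vertex set $A$ with $|A|=a$ spanning at least $ay$ edges; note that for $a\le 2y$ this is vacuous because $\binom{a}{2}<ay$. For each admissible $a$, by summing over sets of size $a$ and choices of $ay$ pairs inside such a set, and applying the $(p;Z)$-spread property to each such family of pairs (which is legitimate since $ay\le xy\le Z$), I would obtain
\[
P_a \;:=\; \pr\bigl(\exists A\subseteq V,\,|A|=a,\,|E(A)|\ge ay\bigr) \;\le\; \binom{n}{a}\binom{\binom{a}{2}}{ay}p^{ay}.
\]

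The next step is to simplify this via the standard estimates $\binom{n}{a}\le(en/a)^a$ and $\binom{\binom{a}{2}}{ay}\le(ea/(2y))^{ay}$ to arrive at $P_a\le f(a)^a$ with
\[
f(a) \;=\; \frac{en}{a}\cdot\Bigl(\frac{eap}{2y}\Bigr)^{\!y}.
\]
A quick differentiation shows $\tfrac{d}{da}\log f(a)=(y-1)/a$, so $f$ is increasing on $[2y+1,x]$ provided $y>1$; this holds as soon as $k\ge e^{1/\Lambda}$, since $y=\Lambda\log(np)\ge \Lambda\log k$. Consequently $f(a)\le f(x)$ throughout the range. The key calculation is that the specific value $x=2\Lambda e^{-(1+1/\Lambda)}\log(np)/p$ is tuned so that at $a=x$ one has $exp/(2y)=e^{-1/\Lambda}$, hence $(exp/(2y))^y=e^{-y/\Lambda}=1/(np)$, and combined with $en/x=e^{2+1/\Lambda}np/(2\Lambda\log(np))$ this yields the clean identity
\[
f(x) \;=\; \frac{e^{2+1/\Lambda}}{2\Lambda\log(np)}.
\]
Choosing $k=k(\Lambda)$ large enough so that $\log k\ge 2e^{2+1/\Lambda}/\Lambda$ guarantees $f(x)\le 1/2$, and hence $P_a\le 2^{-a}$ uniformly for $a\in[2y+1,x]$.

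Finally I would bound the total $\sum_a P_a$. When $np\to\infty$, the geometric tail immediately gives $\sum_{a\ge 2y+1}P_a\le 2^{-2y}=(np)^{-2\Lambda\log 2}=o(1)$. The main obstacle, and what I would need to handle with a bit of care, is the regime where $np$ stays bounded (i.e.\ $p=\Theta(1/n)$): the bound $P_a\le 2^{-a}$ only yields a small \emph{constant} there. To deal with this, I would go back to the crude bound $P_a\le n^a (ea/(2y))^{ay}p^{ay}$ for the leading small-$a$ terms, which for $p\le c/n$ gives $P_{2y+1}\le n^{(2y+1)(1-y)}k^{(2y+1)y}$, a polynomial decay in $n$ whenever $y>1$. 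Combining this with the earlier geometric decay for $a\gg y$, and absorbing the $O(x)=O(n)$ number of terms, yields $\sum P_a=o(1)$ in both regimes, completing the proof.
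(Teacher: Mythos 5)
Your proof follows the same core argument as the paper's: bound the failure probability by a first moment / union bound, compute the same $f(a)=\frac{en}{a}\bigl(\frac{eap}{2y}\bigr)^y$, observe that $f$ is increasing and is tuned so that $f(x)=\frac{e^{2+1/\Lambda}}{2\Lambda\log(np)}<1/2$ for $k$ large, then sum $f(a)^a$. Your observation that the terms $a\le 2y$ are vacuous because $\binom{a}{2}<ay$ is a nice touch not used in the paper, and your algebra for $f(x)$ matches the paper's exactly.

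Where you diverge is the final tail-sum estimate, and your handling of it is less tight than it could be. You split by regime: when $np\to\infty$ you get $\sum_{a\ge 2y+1}2^{-a}=2^{-2y}=(np)^{-2\Lambda\log 2}=o(1)$, which is clean; but when $np=\Theta(1)$ you acknowledge that the geometric bound yields only a small constant and fall back to a hand-waved argument about ``absorbing the $O(n)$ number of terms.'' That last step can indeed be made rigorous --- for $np$ bounded and $y\ge 2$ (which you can force by taking $k$ large) one has $f(a)\le C n^{1-y}$ for a constant $C$, so the sum is dominated by its first term and is $o(1)$ --- but as written the argument is not complete. The paper instead handles all regimes uniformly with a single device: pick an intermediate scale $x'$ satisfying $1\ll x'\ll x$ (possible since $x\to\infty$), so that $f(x')=f(x)\cdot(x'/x)^{y-1}=o(1)$; then bound $\sum_{a\le x'}f(a)^a\le\sum_{a\ge 1}f(x')^a=o(1)$ by the geometric series and $\sum_{a>x'}f(a)^a\le\sum_{a\ge x'}f(x)^a\le 2\cdot 2^{-x'}=o(1)$. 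This sidesteps the case distinction entirely and is the one step you should tighten in your write-up.
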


\begin{proof}
  Let $\eta=2\Lambda  e^{-(1+1/\Lambda )}$.
  For an edge set $F\subseteq E(K_n)$ of size $m\le Z$,
  the probability that $F\subseteq E(G)$ is at most $p^m$.
  Fix $1\le a\le x$
  and let $m=ay\le xy\le Z$.
  By the union bound,
  the probability that there exists a vertex set of size $a$
  that spans at least $m$ edges
  is at most
  \[
    \binom{n}{a}\binom{\binom{a}{2}}{m}p^m
    \le \left(\frac{en}{a}\cdot\left(\frac{ea p}{2y}\right)^y\right)^a.
  \]
  Write $f(a)=(en/a)\cdot(eap/(2y))^y$ and
  note that $f$ is increasing in $a$.
  Thus, for every $a\le x$, taking $k>\exp(2e/\eta)$,
  \[\begin{aligned}
    f(a)&\le f(x)
    = \frac{en}{x}\cdot\left(\frac{exp}{2y}\right)^y
    = \frac{enp}{\eta\log(np)}\cdot\left(\frac{e\eta}{2\Lambda}\right)^{\Lambda\log(np)}\\
    &
    = \frac{enp}{\eta\log(np)}\cdot\left(e^{-1/\Lambda}\right)^{\Lambda\log(np)}
    =
    \frac{enp }{\eta\log(np)}\cdot (np)^{-1}
    \le \frac{e}{\eta\log{k}} < \frac{1}{2}.
  \end{aligned}\]
  Let $x'=x'(n)$ satisfy $1\ll x' \ll x$.
  Note that $f(x')/f(x)=(x'/x)^{y-1}\ll 1$,
  hence, for $a\le x'$, $f(a)\le f(x')\ll f(x)< 1/2$.
  Then, by the union bound, the probability that there exists a set of size at most $x$
  that spans at least $ay$ edges
  is at most
  \[
    \sum_{a=1}^{x} (f(a))^a
    \le\sum_{a=1}^{x'} (f(a))^a
    +\sum_{a=x'}^{x} (f(a))^a
    \le\sum_{a=1}^{\infty} (f(x'))^a
    +\sum_{a=x'}^{\infty} (f(x))^a
    = o(1),
  \]
  as required.
\end{proof}

\subsection{Random regular graphs}\label{sec:gnr}
\newcommand{\cfg}{\cM}
\newcommand{\cfgm}{\cG^*}
To prove \cref{thm:gnr}, we show that random regular graphs are \whp{} sparse connectors,
and apply \cref{prop:sparseconnector:is:rigid}.
We cast our arguments in the {\em configuration model}.
Let us lay formal grounds.
Let $K_{n\cdot k}$ be the complete graph on the vertex set $[n]\times[k]$.
The vertices of $K_{n\cdot k}$ are called \defn{half-edges},
and the vertex sets $V_i=\{(i,j):\ j\in [k]\}$, $i\in[n]$, are called \defn{clusters}.
For a subgraph $G\subseteq K_{n\cdot k}$, denote by $\pi(G)$ the multigraph obtained from $G$
by contracting each cluster $V_i$ into a single vertex $i$.
Assume that $nk$ is even,
and denote by $\cfg_{n,k}$ the uniform distribution over perfect matchings in $K_{n\cdot k}$.
Denote further by $\cfgm_{n,k}$ the distribution of $\pi(M)$ for $M\sim\cfg_{n,k}$.
We will repeatedly use the following  result (see, e.g.,~\cite{JLR}*{Theorem~9.9})):
if $\cfgm_{n,k}$ satisfies a property \whp{}
then $\cG_{n,k}$ satisfies that property \whp{} as well.

To prove that random regular graphs are typically sparse,
we show that they (or, rather, $\cfgm_{n,k}$) are typically well-spread.
\begin{lemma}\label{lem:gnr:edges}
  For every $\eps\in(0,1)$ and every $k\ge 3$,
  $\cfgm_{n,k}$ is $(p;M)$-spread
  for $p=\eps^{-1}k/n$ and $M=(1-\eps)kn/2$.
\end{lemma}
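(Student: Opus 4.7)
My plan is to work directly in the configuration model and establish the bound by a witness-counting argument. Fix a set $E_0$ of pairs of vertices with $|E_0|=m\le(1-\eps)kn/2$, and enumerate its elements as $e_1=\{i_1,j_1\},\dots,e_m=\{i_m,j_m\}$. For a uniformly random $M\sim\cfg_{n,k}$, the event $E_0\subseteq E(\pi(M))$ says precisely that, for every $\ell\in[m]$, the matching $M$ contains at least one edge with one endpoint in the cluster $V_{i_\ell}$ and the other in $V_{j_\ell}$.

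To bound this probability, I would count pairs consisting of a matching $M$ together with a \emph{witness sequence}: a choice, for each $\ell$, of (pairwise distinct) half-edges $h_\ell\in V_{i_\ell}$ and $h_\ell'\in V_{j_\ell}$ such that $\{h_\ell,h_\ell'\}\in M$. Every matching realising the event admits at least one such witness sequence, so the number of such matchings is at most the number of (matching, witness sequence) pairs. For each $\ell$ there are at most $k\cdot k=k^2$ choices for $(h_\ell,h_\ell')$ (the distinctness constraint across different values of $\ell$ can only reduce the count), and once the $2m$ witness half-edges are fixed, the remaining $nk-2m$ half-edges can be paired up in $(nk-2m-1)!!$ ways. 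Dividing by the total count $(nk-1)!!$ of perfect matchings of $K_{n\cdot k}$ yields
\[
  \pr\bigl(E_0\subseteq E(\pi(M))\bigr)\le\frac{k^{2m}\,(nk-2m-1)!!}{(nk-1)!!}=\frac{k^{2m}}{\prod_{\ell=1}^{m}(nk-2\ell+1)}.
\]

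The proof then closes with a short arithmetic step: since $m\le(1-\eps)kn/2$, every factor in the denominator satisfies $nk-2\ell+1\ge nk-2m+1\ge\eps kn$, and so the right-hand side is at most $k^{2m}/(\eps kn)^m=(k/(\eps n))^m=p^m$, as required. I do not anticipate any serious obstacle here; the only care is in the bookkeeping of the witness step, namely verifying that each matching with $E_0\subseteq E(\pi(M))$ is counted at least once and that the loose upper bound $k^{2m}$ on the number of witness sequences is indeed valid notwithstanding the distinctness restrictions across the indices $\ell$. The hypothesis $k\ge 3$ and the implicit parity condition on $nk$ play no role beyond ensuring that $\cfg_{n,k}$ is well-defined.
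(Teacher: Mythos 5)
Your proof is correct and takes essentially the same route as the paper. Your ``witness sequences'' are exactly the paper's partial matchings $M_0$ of $K_{n\cdot k}$ with $\pi(M_0)=E_0$ (bounded by $(k^2)^m$ in both), and your ratio $\prod_{\ell=1}^m(nk-2\ell+1)^{-1}\le(\eps kn)^{-m}$ is the same estimate the paper derives by the sequential conditioning $\pr(e_i\in M\mid e_1,\dots,e_{i-1}\in M)\le 1/(kn-2m)$; the double-counting framing versus the paper's union-bound phrasing is a cosmetic difference.
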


\begin{proof}
  Let $G^*\sim\cfgm_{n,k}$,
  and let $E_0$ be a set of pairs of vertices of $G^*$ of size at most $(1-\eps)kn/2$.
  We think of $E_0$ as a (simple) subgraph of $K_n$.
  Note that the number of matchings $M_0$ of size $m$ of $K_{n\cdot k}$ for which $\pi(M_0)=E_0$
  is at most $(k^2)^m$.
  Fix one such matching $M_0=\{e_1,\dots,e_m\}$.
  Let $M\sim\cfg_{n,k}$, and note that for every $i\in[m]$,
  \[
    \pr(e_i\in M\mid e_1,\dots,e_{i-1}\in M)
    \le \frac{1}{kn-2m}
    \le \frac{1}{\eps kn}.
  \]
  Thus, $\pr(M_0\subseteq M)\le(\eps kn)^{-m}$.
  We couple $G^*\sim\cfgm_{n,k}$ with $M$ by letting $G^*=\pi(M)$.
  By the union bound,
  $\pr(E_0\subseteq E(G^*)) \le (\eps^{-1}k/n)^m$.
\end{proof}

\begin{lemma}\label{lem:gnr:sparse}
  For every sufficiently large constant $k$,
  $G\sim\cG_{n,k}$ is \whp{} $(x,y)$-sparse for $x=12\log{k}\cdot n/k$ and $y=70\log{k}$.
\end{lemma}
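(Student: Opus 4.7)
The plan is to combine Lemma~\ref{lem:gnr:edges} with Lemma~\ref{lem:spread:sparse} to establish the required sparsity property for $\cfgm_{n,k}$ \whp{}, and then transfer the conclusion to $\cG_{n,k}$ via the multigraph-to-simple-graph fact recalled at the start of this subsection.

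I would first apply Lemma~\ref{lem:gnr:edges} with $\eps = 1/2$, obtaining that $\cfgm_{n,k}$ is $(p;M)$-spread for $p = 2k/n$ and $M = kn/4$. In particular $np = 2k$, so $\log(np) = \log(2k)$. I would then apply Lemma~\ref{lem:spread:sparse} with $\Lambda = 36$; its hypothesis $p \ge k_0/n$ is met once $k$ is larger than the constant produced by that lemma. The conclusion is that $\cfgm_{n,k}$ is \whp{} $(x_L, y_L)$-sparse for
\[
  x_L = 2\Lambda e^{-(1+1/\Lambda)} \frac{\log(np)}{p} = 36\, e^{-(1+1/36)} \log(2k) \cdot \frac{n}{k}, \qquad y_L = \Lambda \log(np) = 36 \log(2k),
\]
provided that the side condition $x_L y_L \le M$ is satisfied.

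Two routine checks remain. The side condition $x_L y_L \le M$ reduces to an inequality of the form $k^2 \ge C (\log k)^2$, which holds for all sufficiently large $k$. Since $36\, e^{-(1+1/36)} > 12$ and $36 \log(2k) \le 70 \log k$ for all sufficiently large $k$, we also have $x_L \ge 12 \log k \cdot n/k = x$ and $y_L \le 70 \log k = y$ once $k$ is large enough. Because $(x',y')$-sparsity with $x' \ge x$ and $y' \le y$ implies $(x, y)$-sparsity, $\cfgm_{n,k}$ is therefore \whp{} $(x, y)$-sparse.

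Finally, the $(x,y)$-sparsity property is inherited when passing from the multigraph to its simple underlying graph (since discarding loops and parallel edges cannot increase edge counts within any vertex set), so \whp{} $(x,y)$-sparsity of $\cfgm_{n,k}$ transfers to $\cG_{n,k}$ via the standard contiguity fact. There is no real obstacle here; the only delicate point is tuning $\Lambda$ (and $\eps$) so that the sparsity parameters delivered by Lemma~\ref{lem:spread:sparse} dominate the target $(x,y)$.
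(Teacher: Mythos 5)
Your proof is correct and follows essentially the same route as the paper's: apply Lemma~\ref{lem:gnr:edges} with $\eps=1/2$ to get $(2k/n;\,kn/4)$-spread for $\cfgm_{n,k}$, feed this into Lemma~\ref{lem:spread:sparse}, check that the resulting $(x_L,y_L)$ dominate the target $(x,y)$ via the monotonicity of sparsity, verify the side condition $x_L y_L\le M$, and transfer to $\cG_{n,k}$. The paper uses $\Lambda=35$ where you take $\Lambda=36$; both choices work. One small conceptual wrinkle: the transfer from $\cfgm_{n,k}$ to $\cG_{n,k}$ goes via the contiguity fact recalled at the start of Section~\ref{sec:gnr} (i.e., conditioning on simplicity), not by ``discarding loops and parallel edges'' from a sampled multigraph; your parenthetical remark slightly misdescribes the mechanism, but the invocation of the standard fact is correct and suffices.
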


\begin{proof}
  We prove the statement for $G\sim\cfgm_{n,k}$ and deduce it for $\cG_{n,k}$.
  Let $\Lambda=35$.
  By \cref{lem:gnr:edges}, for $\eps=1/2$,
  $\cfgm_{n,k}$ is $(p;M)$-spread for $p=2k/n$ and $M=kn/4$.
  Note that
  $x=12\log{k}\cdot n/k \le 24\log(np)/p \le 2\Lambda e^{-(1+1/\Lambda)} \log(np)/p$,
  and $y=70\log{k}\ge\Lambda\log(np)$.
  Note also that $xy<M$ for sufficiently large $k$.
  Thus, by \cref{lem:spread:sparse}, for sufficiently large $k$,
  $G^*\sim\cfgm_{n,k}$ is \whp{} $(x,y)$-sparse.
\end{proof}

We proceed to prove that $\cG_{n,k}$ is typically a good connector.
\begin{lemma}\label{lem:gnr:connect}
  Suppose $k\ge 3$,
  and let $G\sim \cG_{n,k}$.
  Then, \whp{}
  $G$ is a $K$-connector for  $K=8\log{k}\cdot\frac{n}{k}$.
\end{lemma}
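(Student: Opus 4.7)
The plan is to work in the configuration model $\cfgm_{n,k}$ (as in the preceding lemmas of this subsection) and then transfer the conclusion to $\cG_{n,k}$ via the standard coupling cited at the start of \cref{sec:gnr}. Since the statement is vacuous when $K > n/2$, I may assume $K \le n/2$, which holds once $n$ is large relative to $k$. Fix disjoint vertex sets $A, B \subseteq [n]$ with $|A|=|B|=K$; I will estimate the probability that no edge of $G^* \sim \cfgm_{n,k}$ joins $A$ to $B$ by a sequential exposure of the underlying random matching on the $nk$ half-edges.

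Expose the matching by repeatedly picking an arbitrary unmatched half-edge in $A$ and revealing its partner uniformly among the remaining unmatched half-edges; let $T_i$ be the event that the $i$-th step does not create an $A$-$B$ edge. The process needs at least $Kk/2$ such steps to exhaust the $Kk$ half-edges incident to $A$ (since each step uses at most two of them). Conditionally on $T_1 \cap \cdots \cap T_{i-1}$, all $Kk$ half-edges of $B$ remain unmatched while the partner revealed in step $i$ is drawn uniformly from $nk - 2i + 1$ candidates, so
$\pr(T_i \mid T_1 \cap \cdots \cap T_{i-1}) \le 1 - Kk/(nk) = 1 - K/n$.
Multiplying through the first $Kk/2$ steps gives
\[
  \pr(\text{no } A\text{-}B \text{ edge in } G^*) \le \left(1 - \frac{K}{n}\right)^{Kk/2} \le \exp\!\left(-\frac{K^2 k}{2n}\right) = \exp\!\left(-32(\log k)^2 \frac{n}{k}\right).
\]

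A union bound over the at most $\binom{n}{K}^2 \le (en/K)^{2K}$ ordered pairs of disjoint $K$-subsets bounds the probability that $G^*$ is not a $K$-connector by $\exp\bigl(2K\log(en/K) - 32(\log k)^2 n/k\bigr)$. Since $\log(n/K) = \log(k/(8\log k))$, the first exponent term is $2K(1 + \log k - \log(8\log k))$, which for $k \ge 3$ is at most $(16(\log k)^2 + O(\log k)) n/k$, while the subtracted term is $32(\log k)^2 n/k$; hence the overall exponent is $-\Omega((\log k)^2 n/k)$, tending to $-\infty$ as $n \to \infty$ for any fixed $k \ge 3$. Transferring the conclusion to $\cG_{n,k}$ completes the proof. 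The only (mild) obstacle is the constant bookkeeping: the choice $K = 8 \log k \cdot n/k$ leaves a factor-of-two margin between the two exponents above, so in fact any constant strictly greater than $4$ in place of $8$ would suffice.
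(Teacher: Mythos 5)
Your proof is correct and takes essentially the same route as the paper's: both pass to the configuration model, bound the probability that a fixed disjoint pair $(A,B)$ spans no crossing edge by sequentially exposing at least $kK/2$ partners of $A$-half-edges (the paper phrases this via events $\cA_i$, $\cB_i$ indexed over the half-edges of one side; your explicit step-by-step revealing is equivalent and, if anything, a bit cleaner), and finish with a union bound over pairs of $K$-subsets. One small slip worth noting: whether $K\le n/2$ holds is determined purely by $k$, since it is equivalent to $8\log k/k\le 1/2$, and does not become true as $n$ grows --- but this does not affect the argument, because when $K>n/2$ the statement is vacuous for every $n$ and the union bound over disjoint pairs is empty.
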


\begin{proof}
  As before, we prove the statement for $G\sim\cfgm_{n,k}$ and deduce it for $\cG_{n,k}$.
  Let $U,W$ be disjoint vertex sets of $K_{n\cdot k}$ with $|U|=a$ and $|W|=b$.
  Write $U=\{u_1,\dots,u_a\}$.
  For $i\in[a]$ let $\cA_i$ be the event
  that $M\sim\cfg_{n,k}$ does not contain an edge from $u_i$ to $W$.
  Denote by $\cB_i$ the event that $M$ contains an edge from $u_i$ to $u_j$ for $j<i$.
  Note that
  \[
    \pr\left(\cA_i\mid\neg\cB_i\wedge\left(\bigwedge_{j<i}\cA_j\right)\right)
    \le 1-\frac{b}{kn-1}.
  \]
  Thus, since $\cB_i$ --- deterministically --- occurs for at most $a/2$ indices $i$,
  \[
    \pr\left(\bigwedge_i \cA_i\right) \le \left(1-\frac{b}{kn-1}\right)^{a/2}.
  \]
  Now, we couple $G$ with $M$ by setting $G=\pi(M)$.
  We deduce from the discussion above and the union bound that
  the probability that there exist two vertex sets $A,B$ of $G$ with $|A|=|B|=K$
  and $E_G(A,B)=\es$ is at most
  \[\begin{aligned}
    \binom{n}{K}^2\left(1-\frac{kK}{kn-1}\right)^{kK/2}
    &\le \left(\frac{en}{K}\right)^{2K}\exp\left(-\frac{kK^2}{4n}\right)\\
    &\le \left(\frac{e^2 k^2}{8^2{\log^2}{k}}\cdot\exp\left(-2\log{k}\right)\right)^K
    = \left(\frac{1}{8{\log^2}{k}}\right)^K = o(1).
  \end{aligned}\]
  Thus, $G$ is \whp{} a $K$-connector.
\end{proof}

\begin{proof}[Proof of \cref{thm:gnr}]
  Let $C_1=C(1)$ be the constant from \cref{prop:sparseconnector:is:rigid},
  and let $\Lambda=70$.
  Let $C$ be a constant satisfying $C\geq C_1$ and $C\geq 28\Lambda\log{C}$.
  Let $k\geq Cd\log{d}$.
  We also assume that $k$ is sufficiently large for \cref{lem:gnr:sparse} to hold
  (otherwise, we make $C$ larger).
  Let $x=12\log{k}\cdot n/k$ and $y=70\log{k}$.
  By \cref{lem:gnr:sparse}, $G$ is \whp{} $(x,y)$-sparse,
  and by \cref{lem:gnr:connect}, $G$ is \whp{} a $(2x/3)$-connector.
  Note that
    \begin{align*}
    y &= \Lambda \log{k}
       = \Lambda\log(C d\log{d})
       \leq \Lambda \log{C}+2\Lambda\log{d}\\
      &\leq  C/28+C\log{d}/14
       \leq C\log{d}/14+C\log{d}/14
       = C\log{d}/7
       \leq \frac{k}{7d}.
    \end{align*}
  So $G$ is \whp{} $(x,k/(7d))$-sparse.
  Hence,
  since $k\ge C d\log{d}\ge C_1d\log{d}$,
  by \cref{prop:sparseconnector:is:rigid}, $G$ \whp{} admits a $d$-rigid partition.
\end{proof}

\subsection{Binomial random graphs}\label{sec:gnp}
We begin with several lemmas about expansion properties of $G(n,p)$.
The following is a well-known statement about the degrees of $G(n,p)$.
\begin{lemma}\label{lem:gnp:deg}
  For every $\eps>0$ there exists $\xi>0$ such that the following holds.
  Let $p\ge (1+\eps)\log{n}/n$, and let $G\sim G(n,p)$.
  Then, \whp{}, $(1-\xi)np\le\delta(G)\le\Delta(G)\le enp$.
\end{lemma}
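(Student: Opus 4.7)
The plan is to handle each vertex degree separately as $\Bin(n-1,p)$ with mean $\mu := (n-1)p = (1-o(1))np$, apply the Chernoff bounds from \cref{thm:chernoff}, and take a union bound over the $n$ vertices. Both tails reduce to showing that the per-vertex failure probability is $o(1/n)$; the key input is the function $\phi(x) = (1+x)\log(1+x) - x$.

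For the lower bound on $\delta(G)$, I would first observe that $\phi(-\xi)$ increases continuously from $\phi(0)=0$ to $\phi(-1)=1$ as $\xi$ ranges over $[0,1]$. Hence, given $\eps>0$, I can choose $\xi = \xi(\eps) \in (0,1)$ with $(1+\eps)\phi(-\xi) > 1$. Since $\mu \ge (1+\eps)\log n - p = (1+\eps-o(1))\log n$, the Chernoff bound yields
\[
  \pr\bigl(\deg(v) \le (1-\xi)\mu\bigr) \le \exp\bigl(-\mu\,\phi(-\xi)\bigr) \le n^{-(1+\eps)\phi(-\xi) + o(1)}.
\]
A union bound over the $n$ vertices gives failure probability $n^{1-(1+\eps)\phi(-\xi)+o(1)} = o(1)$. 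To convert the conclusion $\deg(v) \ge (1-\xi)\mu$ into $\deg(v) \ge (1-\xi')np$ for a slightly larger $\xi' > \xi$ (still in $(0,1)$), I just absorb the factor $\mu/(np) = 1-1/n$ into the constant, since we still have $(1+\eps)\phi(-\xi')>1$ after any sufficiently small adjustment.

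For the upper bound on $\Delta(G)$, the key identity is $\phi(e-1) = e\log e - (e-1) = 1$. Since $enp \ge e\mu$ for $n$ large, the Chernoff bound gives
\[
  \pr\bigl(\deg(v) \ge enp\bigr) \le \pr\bigl(\deg(v) \ge e\mu\bigr) \le \exp(-\mu) \le n^{-(1+\eps)+o(1)}.
\]
A union bound over the $n$ vertices gives failure probability $n^{-\eps + o(1)} = o(1)$, finishing the upper bound.

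The only mild subtlety — and essentially the whole point of the statement — is that $\xi$ cannot be taken arbitrarily small: since $\phi(-\xi)<1$ for all $\xi<1$, the size of $\xi$ must depend on $\eps$, and tends to $1$ as $\eps \to 0$. This reflects the known fact that near the connectivity threshold, minimum and maximum degrees spread apart by a constant factor. Accordingly, the weaker Gaussian form of Chernoff $\exp(-\nu^2/(2\mu))$ is insufficient here (it would require $\xi > \sqrt{2/(1+\eps)}$, which can exceed $1$), so using the full $\phi$ form is essential.
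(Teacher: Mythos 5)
Your proof is correct and follows essentially the same route as the paper's: apply the Chernoff bound in its $\phi$-form to $\deg(v)\sim\Bin(n-1,p)$, use $\phi(e-1)=1$ for the upper tail, choose $\xi$ close enough to $1$ so that $(1+\eps)\phi(-\xi)>1$ for the lower tail, and union-bound over the $n$ vertices. The only differences are cosmetic — you track $\mu=(n-1)p$ versus the paper's $np$ more explicitly, and you add a remark about why the Gaussian form of Chernoff would not suffice — but the substance is identical.
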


\begin{proof}
  The degree of a vertex is distributed as a binomial random variable with $n-1$ attempts
  and success probability $p$.
  Thus, by Chernoff bounds (\cref{thm:chernoff}),
  \begin{align*}
    \pr(\deg(v)\ge enp)
      &\le \exp(-np\phi(e-1)) \le n^{-(1+\eps)} \ll n^{-1},\\
    \pr(\deg(v)\le (1-\xi)np)
      &\le \exp\left(-np\phi(-\xi)\right) \le n^{-(1+\eps)\phi(-\xi)},
  \end{align*}
  where the last expression is $o(n^{-1})$ if $\xi<1$ is close enough to $1$
  (since $\phi(x)\to 1$ as $x\to -1$).
  The result follows by applying the union bound over all $n$ vertices.
\end{proof}

Since $G(n,p)$ is (trivially) $p$-spread, we obtain the following.

\begin{lemma}\label{lem:gnp:sparse}
  There exists a sufficiently large constant $k$ such that if $p\ge k/n$, then
  $G\sim G(n,p)$ is \whp{}
  $(x,y)$-sparse
  for
  (i)
  $x=3\log(np)/p$
  and $y=5\log(np)$;
  (ii)
  for $x=7\log(np)/p$
  and $y=11\log(np)$.
\end{lemma}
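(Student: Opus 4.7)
The plan is to apply \cref{lem:spread:sparse} twice, once for each of the two parameter settings. The key observation is that $G(n,p)$ is trivially $p$-spread: since edges are included independently, $\pr(E_0 \subseteq E(G)) = p^{|E_0|}$ for every set $E_0$ of pairs, so $G(n,p)$ is $(p;\binom{n}{2})$-spread. For either choice of $(x,y)$ in the statement, $xy = O(n\log^2(np)/p)$, which is well below $\binom{n}{2}$ once $k$ is large enough, so the hypothesis $xy \le Z$ of \cref{lem:spread:sparse} is automatic.

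For part~(i), I would invoke \cref{lem:spread:sparse} with $\Lambda = 5$, matching $y = 5\log(np) = \Lambda\log(np)$. The lemma then produces, \whp{}, sparsity for $x_0 = 2\Lambda e^{-(1+1/\Lambda)}\log(np)/p = 10\, e^{-6/5}\log(np)/p$. A short numerical check gives $10\, e^{-6/5} > 3$, so $3\log(np)/p \le x_0$, and the monotonicity of $(x,y)$-sparsity in the first coordinate (noted in the excerpt just below the definition) immediately yields the desired $(3\log(np)/p,\,5\log(np))$-sparsity.

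For part~(ii), I would apply \cref{lem:spread:sparse} with $\Lambda = 11$, matching $y = 11\log(np)$. This yields sparsity for $x_0 = 22\, e^{-12/11}\log(np)/p$. Since $22\, e^{-12/11} > 7$, the same monotonicity gives $(7\log(np)/p,\,11\log(np))$-sparsity.

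There is no real obstacle here: the entire argument is a direct invocation of \cref{lem:spread:sparse} together with two numerical verifications that the stated constants $(3,5)$ and $(7,11)$ fit into the template $(2\Lambda e^{-(1+1/\Lambda)}\log(np)/p,\,\Lambda\log(np))$ for appropriate values of $\Lambda$. The slight tightness of the inequalities $10\,e^{-6/5} \approx 3.01$ and $22\,e^{-12/11} \approx 7.39$ explains why these particular numerical constants were chosen in the statement of the lemma, as they are presumably tuned for downstream applications in later sections.
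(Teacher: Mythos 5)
Your proof is correct and follows the same route as the paper: invoke \cref{lem:spread:sparse} with $\Lambda=5$ and $\Lambda=11$ respectively, check that $3\le 2\cdot 5\, e^{-6/5}$ and $7\le 2\cdot 11\, e^{-12/11}$, and conclude via the monotonicity of $(x,y)$-sparsity in its first argument; the paper states the two numerical inequalities without even spelling out the monotonicity step. One tiny slip: $xy$ is actually $\Theta(\log^2(np)/p)$, not $O(n\log^2(np)/p)$ as you wrote (an extraneous $n$), but the correct bound is only smaller, so the verification that $xy\le\binom{n}{2}$ for $p\ge k/n$ goes through unchanged.
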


\begin{proof}
  The result follows from \cref{lem:spread:sparse}
  since $G(n,p)$ is $p$-spread,
  and by noting that
  $3\le 2\cdot 5 e^{-(1+1/5)}$
  and 
  $7\le 2\cdot 11 e^{-(1+1/11)}$.
\end{proof}

\begin{lemma}\label{lem:gnp:connect}
  Suppose $p\ge 4/n$,
  and let $G\sim G(n,p)$.
  Then, \whp{}
  $G$ is a $K$-connector for  $K= 2\log(np)/p$.
\end{lemma}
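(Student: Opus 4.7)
The plan is a direct first-moment calculation: a union bound over ordered pairs of disjoint vertex subsets of size exactly $K$. Since being a $K$-connector is monotone in the edge set, and in an $n$-vertex graph it is equivalent to the non-existence of two disjoint $K$-sets with no edges between them, I would first dispense with the trivial case $2K>n$, in which no such pair exists and the property holds vacuously. Hence one may assume $2K \le n$.

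For any fixed disjoint $A,B\subseteq V$ with $|A|=|B|=K$, the $K^2$ potential edges are independent Bernoulli($p$) variables, so $\Pr(E_G(A,B)=\es)=(1-p)^{K^2}\le e^{-pK^2}$. Since the number of ordered pairs of disjoint $K$-sets is at most $\binom{n}{K}\binom{n-K}{K}\le (en/K)^{2K}$, the union bound yields
\[
\Pr(G \text{ is not a } K\text{-connector}) \;\le\; \left(\frac{en}{K}\right)^{\!2K}\! e^{-pK^2}.
\]

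Substituting $K=2\log(np)/p$ gives $pK=2\log(np)$, $pK^2=2K\log(np)$, and $en/K=enp/(2\log(np))$, so the right-hand side telescopes to
\[
\left(\frac{enp}{2\log(np)}\right)^{\!2K}\!(np)^{-2K} \;=\; \left(\frac{e}{2\log(np)}\right)^{\!2K}.
\]

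Since $np\ge 4$, we have $2\log(np)\ge 2\log 4 > e$, so the base above is a constant $\rho<1$. Moreover, $K\to\infty$ as $n\to\infty$: if $p$ is bounded away from zero then $K\asymp\log n$, while if $p=o(1)$ then already $K\ge 2(\log 4)/p\to\infty$. Hence $\rho^{2K}=o(1)$, and $G$ is \whp\ a $K$-connector. I do not foresee any real obstacle here; the only point worth flagging is that the hypothesis $p\ge 4/n$ is (barely) what makes $2\log(np)>e$, which is precisely the inequality that forces the base of the final expression to be strictly less than $1$.
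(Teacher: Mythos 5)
Your proof is correct and follows essentially the same route as the paper: a union bound over pairs of disjoint $K$-sets, the estimate $(1-p)^{K^2}\le e^{-pK^2}$, and the same algebraic simplification to $\bigl(e/(2\log(np))\bigr)^{2K}=o(1)$. The only differences are that you explicitly handle the vacuous case $2K>n$ and verify $K\to\infty$, points the paper leaves implicit; otherwise the argument is identical.
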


\begin{proof}
  Let $A,B$ be two disjoint vertex sets with $|A|=|B|=K$.
  The probability that $G$ contains no edge between $A$ and $B$ is $(1-p)^{K^2}$.
  Thus, by the union bound over all sets $A,B$,
  the probability that the assertion of the claim fails is at most
  \[
    \binom{n}{K}^2 (1-p)^{K^2}
    \le \left(\left(\frac{en}{K}\right)^2 e^{-pK}\right)^{K}
    = \left(\frac{e}{2\log(np)}\right)^{2K}
    < 0.99^{2K}
    = o(1).\qedhere
  \]
\end{proof}

We are now ready to prove \cref{thm:gnp}.

\begin{proof}[Proof of \cref{thm:gnp}]
  Let $\xi=\xi(\eps)$ be the constant guaranteed by \cref{lem:gnp:deg}, and let
   $\drat=e/(1-\xi)$.
  Write $\delta=\delta(G)$ and $\Delta=\Delta(G)$.
  Note that by \cref{lem:gnp:deg}, $\Delta\le \drat\delta$  \whp{}.
  Let $C=C(\drat)$ be the constant from \cref{prop:sparseconnector:is:rigid}.
  Let $c=\min\{1/35, 1/C\}$, and let
  $d=c \delta/\log(np)$.
  
  Let $x=3\log(np)/p$ and $y=5\log(np)$.
  By \cref{lem:gnp:sparse},
  $G$ is \whp{} $(x,y)$-sparse.
  By \cref{lem:gnp:connect}, $G$ is \whp{} a $(2x/3)$-connector.
  Note that $5 \log(np)\leq\delta/(7d)$, so
  $G$ is \whp{} $(x,\delta/(7d))$-sparse.
  Furthermore, using the fact that $d\leq c\delta\leq c e n p < np$, we obtain $\delta = d \log(np)/c \geq C d \log(np)> C d \log{d}$.
  Hence, by \cref{prop:sparseconnector:is:rigid}, $G$ \whp{} admits a $d$-rigid partition.
\end{proof}

\subsection{Random graph processes}\label{sec:gnt}
In this section we prove \cref{thm:gnt}.
Let $2\le d=d(n)\le c\log{n}/\log\log{n}$
for a small constant $c>0$ to be determined later,
and let $G\sim G(n,\tau_d)$.
For $\eta>0$ denote $S_\eta=\{v\in V: \deg_G(v)<\eta\log{n}\}$.
It will be useful to couple $G$ with
$G^+\sim G(n,p)$ for $p=2\log{n}/n$,
$G^-\sim G(n,p)$ for $p=\log{n}/n$,
and $G^\circ\sim G(n,m)$ for $m=n\log{n}$,
so that $G^-$ is a subgraph of $G$
and $G$ is a subgraph of $G^+,G^\circ$.
Such a coupling is possible since $\tau_d<n\log{n}$ \whp{} (see \cref{lem:gnp:deg})
and by using standard translations between $G(n,p)$ and $G(n,m)$ (see, e.g.,~\cite{FK}).

\begin{claim}\label{cl:small}
  There exists $\eta>0$
  such that in $G$,
  \whp{},
  the distance between any two vertices in $S_\eta$ is at least $5$,
  and no vertex in $S_\eta$ lies in a cycle of length at most $4$.
\end{claim}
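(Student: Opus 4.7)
The plan is to exploit the sandwich coupling $G^- \subseteq G \subseteq G^+$ introduced just before the claim and to reduce the task to a first-moment computation. First, observe that if $v \in S_\eta$ then $\deg_{G^-}(v) \le \deg_G(v) < \eta \log n$, so $S_\eta$ is contained in the set $T_\eta := \{v : \deg_{G^-}(v) < \eta\log n\}$. Moreover, any path or cycle of length at most $4$ in $G$ is also a path or cycle of length at most $4$ in $G^+$, since $G \subseteq G^+$. Thus it suffices to show that, for some fixed $\eta > 0$, whp $G^+$ contains no path of length $\ell \in \{1,2,3,4\}$ both of whose endpoints lie in $T_\eta$, and no cycle of length $\ell \in \{3,4\}$ containing a vertex of $T_\eta$.

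The key input is the pointwise estimate $\pr(v \in T_\eta) \le n^{-1 + \psi(\eta)}$, where $\psi(\eta) \to 0$ as $\eta \to 0^+$. This follows from the Chernoff bound (\cref{thm:chernoff}) applied to $\deg_{G^-}(v) \sim \mathrm{Bin}(n-1,\log n/n)$, whose mean is $(1-o(1))\log n$: the probability of landing below $\eta \log n$ is at most $\exp\bigl(-(1 - \eta + \eta \log \eta + o(1))\log n\bigr)$, and one may take $\psi(\eta) = \eta - \eta\log\eta + o(1)$, which tends to $0$.

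The first-moment argument now proceeds as follows. For a fixed path $P$ of length $\ell \in \{1,2,3,4\}$ on vertices $v_0,\ldots,v_\ell$, condition on the $\ell$ edges of $P$ lying in $G^+$. The residual $G^-$-degrees of $v_0$ and $v_\ell$ into $V \setminus V(P)$ are independent $\mathrm{Bin}(n-\ell-1,\log n/n)$ variables that are independent of the conditioning event, using that $G^-$ and $G^+ \setminus G^-$ can be generated as independent binomial graphs of the appropriate densities. Since the $O(1)$ ignored potential edges shift the binomial mean by $O(1)$, this is absorbed into the Chernoff exponent, and the conditional probability of $\{v_0,v_\ell \in T_\eta\}$ remains bounded by $n^{-2 + 2\psi(\eta)}$. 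Multiplying by the $O(n^{\ell+1})$ choices of ordered path and by $p_+^\ell = O((\log n/n)^\ell)$, the expected number of such configurations is
\[
O\bigl((\log n)^\ell \cdot n^{-1 + 2\psi(\eta)}\bigr),
\]
which is $o(1)$ once $\eta$ is chosen small enough that $2\psi(\eta) < 1$. The analogous computation for cycles of length $\ell \in \{3,4\}$ with a distinguished vertex in $T_\eta$ yields $O((\log n)^\ell \cdot n^{-1 + \psi(\eta)}) = o(1)$. A union bound over the finitely many values of $\ell$ and of the two bad structures completes the proof.

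The only subtle point is the decoupling of the path/cycle event from the endpoint degree events, which is handled by the standard observation that ignoring the $O(1)$ edges incident to $V(P)$ affects neither the binomial mean nor the Chernoff exponent to leading order; everything else is routine first-moment bookkeeping.
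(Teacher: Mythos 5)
Your proof is correct, but it takes a genuinely different route from the paper: the paper handles this claim by citing a ready-made lemma (\cite{Kri16}*{Lemma~4.5}) about the absence of short paths between potentially identical low-degree vertices in $G(n,m)$, applied to $G^\circ$, while you give a self-contained first-moment argument. Your two key observations are exactly right and are what make the computation go through: (i) $S_\eta \subseteq T_\eta := \{v : \deg_{G^-}(v) < \eta\log n\}$ because $G^- \subseteq G$, so being low-degree is a $G^-$-event; and (ii) any short path or cycle in $G$ survives in $G^+ \supseteq G$, so structural events can be priced against $G^+$. The decoupling step is handled cleanly by passing to residual $G^-$-degrees of $v_0,v_\ell$ into $V\setminus V(P)$, which are independent of each other and of the event $\{P\subseteq G^+\}$ (disjoint sets of potential edges, plus the standard independent-overlay realisation of the coupling $G^-\subseteq G^+$), and are still dominated above by $\{v_0\in T_\eta\}$ resp.\ $\{v_\ell\in T_\eta\}$. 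The Chernoff exponent $\psi(\eta)=\eta-\eta\log\eta+o(1)\to 0$ and the final bounds $O((\log n)^\ell\, n^{-1+2\psi(\eta)})$ for paths and $O((\log n)^\ell\, n^{-1+\psi(\eta)})$ for cycles are correct, and choosing $\eta$ with $2\psi(\eta)<1$ finishes the union bound. What the paper's route buys is brevity and reuse of a known lemma; what your route buys is a short, elementary, fully self-contained proof that avoids the external reference and makes the dependence on $\eta$ explicit. Either works.
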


\begin{proof}
  From monotonicity, it suffices to prove that \whp{}
  $G^\circ$ does not contain a path of length $4$
  between two potentially identical vertices in $S_\eta$.
  This fact is proved in~\cite{Kri16}*{Lemma~4.5} for a small enough constant $\eta>0$.
\end{proof}

\begin{proof}[Proof of \cref{thm:gnt}]
  For $d=1$ the claim follows from the known fact that $G(n,\tau_1)$ is \whp{} connected
  (see, e.g.,~\cite{ER61}).
  Therefore, we will assume $d\ge 2$.
  We also assume that $d\le c\log{n}/\log\log{n}$
  for $c>0$ to be chosen later.
  Let $\eta>0$ be the constant from \cref{cl:small}.
  Let $V^*=V\sm S_\eta$
  and consider the graph $G^*=G[V^*]$.
  Write $\delta=\delta(G^*)$
  and $\Delta=\Delta(G^*)$.
  Note that by \cref{lem:gnp:deg},
  $\Delta\le \drat\delta$ for $\drat=e/\eta$ \whp{}.
  Let $\nlb=1/8$
  and let $C=C(\nlb,\drat)$ be the constant guaranteed by the partition lemma (\cref{lem:partition}).
  Let $c=\min\{\eta/12, \eta/C\}$.
  Note that
  $\delta\ge\eta\log{n}\ge Cc\log{n} \ge Cd\log\log{n} > Cd\log{d}$.
  Thus, there exists a partition $V^*=V_1\cup\dots\cup V_d$
  with $e(v,V_i)\ge 7\delta/(8d)$ for every $v\in V^*$ and $i\in[d]$.

  We now adjust the partition as follows:
  for every $v\in S_\eta$, let $v_1,\dots,v_d$ be $d$ arbitrary neighbours of $v$.
  Note that \cref{cl:small} implies that the sets $\{v_1,\dots,v_d\}$
  are \whp{} disjoint for distinct $v\in S_\eta$,
  and are subsets of $V^*$.
  For every $v\in S_\eta$ and $i\in[d]$, we move $v_i$ to $V_i$.
  Note that after the change,
  $e(v,V_i)\ge 7\delta/(8d)-1 \ge 6\delta/(7d)$ (for large enough $n$)
  for every $v\in V^*$ and $i\in[d]$.

  Let $p=\log{n}/n$ and $x=3\log(np)/p$.
  By \cref{lem:gnp:connect},
  $G^-$ (and hence $G$, and hence $G^*$) is \whp{} a $(2x/3)$-connector.
  Let $p'=2p$, and note that $x\sim 6\log(np')/p'<7\log(np')/p'$.
  Let $y=11\log(np')\sim 11\log\log{n}$.
  By \cref{lem:gnp:sparse},
  $G^+$ (and hence $G$, and hence $G^*$) is \whp{} $(x,y)$-sparse.
  Noting that
  $6\delta/(7d) \ge \frac{6\eta}{c}\log\log{n} \ge 72\log\log{n} > 6y$,
  by \cref{lemma:sparse+connector+minimum_degree},
  $G[V_i,V_j]$ is connected for all $1\leq i,j\leq d$,
  and thus the partition $V^*=V_1\cup\dots\cup V_d$ is a strong $d$-rigid partition of $G^*$.
  By adding $S_\eta$ to $V_1$ (say),
  noting that every $v\in S_\eta$ has a neighbour in each $V_i$,
  we obtain a strong $d$-rigid partition of $G$.
\end{proof}

\subsection{Binomial random bipartite graphs}\label{sec:gnnp}
We state the following lemmas without proofs;
they are the bipartite analogs of \cref{lem:gnp:deg,lem:gnp:sparse,lem:gnp:connect},
and their proofs are essentially identical.

\begin{lemma}\label{lem:gnnp:deg}
  For every $\eps>0$ there exists $\xi>0$ such that the following holds. 
Let $p\geq (1+\eps)\log{n}/n$, and let $G\sim G(n,n,p)$. Then, \whp{}, $(1-\xi)np\le\delta(G)\le\Delta(G)\le enp$.
\end{lemma}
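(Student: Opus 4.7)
The plan is to mimic the proof of \cref{lem:gnp:deg} essentially verbatim, adapting only the distribution of a single vertex's degree. In $G \sim G(n,n,p)$ with parts $A$ and $B$, each of size $n$, any vertex $v$ has exactly $n$ potential neighbours (all lying in the opposite part), each present independently with probability $p$. Hence $\deg(v) \sim \Bin(n,p)$, which has mean $np$.

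First I would apply the upper-tail Chernoff bound from \cref{thm:chernoff} with $\nu = (e-1)np$ to get
\[
  \pr(\deg(v) \ge enp) \le \exp(-np\,\phi(e-1)).
\]
Since $\phi(e-1) > 1$ and $np \ge (1+\eps)\log n$, this is at most $n^{-(1+\eps)\phi(e-1)} = o(n^{-1})$. Next, for the lower tail, the same theorem gives, for any $\xi \in (0,1)$,
\[
  \pr(\deg(v) \le (1-\xi)np) \le \exp(-np\,\phi(-\xi)) \le n^{-(1+\eps)\phi(-\xi)}.
\]
Since $\phi(x) \to 1$ as $x \to -1$, I can choose $\xi = \xi(\eps) < 1$ sufficiently close to $1$ so that $(1+\eps)\phi(-\xi) > 1$, making the above expression $o(n^{-1})$.

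Finally, a union bound over all $2n$ vertices (the $n$ vertices in $A$ together with the $n$ vertices in $B$) shows that with probability $1 - o(1)$ every vertex $v$ satisfies $(1-\xi)np \le \deg(v) \le enp$, which is precisely the conclusion that $(1-\xi)np \le \delta(G) \le \Delta(G) \le enp$ \whp.

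There is essentially no obstacle here: the only sanity check is that a single vertex's degree is binomial with $n$ (not $n-1$) trials, which is the only point where the bipartite setting differs from the proof of \cref{lem:gnp:deg}; this difference is harmless and the same choice of $\xi$ works.
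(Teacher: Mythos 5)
Your proof is correct and follows precisely the approach the paper intends: the authors explicitly state that the proof of this lemma is ``essentially identical'' to that of \cref{lem:gnp:deg}, and your adaptation — replacing $\Bin(n-1,p)$ by $\Bin(n,p)$ and running the union bound over $2n$ vertices instead of $n$ — is exactly the right adjustment, and both changes are harmless since each per-vertex failure probability is $o(n^{-1})$.
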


\begin{lemma}\label{lem:gnnp:sparse}
  There exists a sufficiently large constant $k$ such that if $p\ge k/n$, 
  $G\sim G(n,n,p)$ is \whp{}
  $(x,y)$-sparse
  for $x=3\log(np)/p$
  and $y=5\log(np)$.
\end{lemma}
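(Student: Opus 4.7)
The plan is to mirror the proof of \cref{lem:gnp:sparse}. First, I would observe that $G(n,n,p)$, viewed as a distribution over graphs on the $2n$-vertex set $A \cup B$, is $p$-spread: for any collection $E_0$ of $m$ vertex pairs, the probability all pairs lie in the edge set equals $p^m$ if every pair of $E_0$ crosses the bipartition and is $0$ otherwise; in either case it is at most $p^m$, as required.

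Having established this, the idea is to invoke \cref{lem:spread:sparse} with the total vertex count $2n$ in place of $n$ and with $\Lambda = 5$, following the template of \cref{lem:gnp:sparse}. Plugging into the general lemma yields $(x',y')$-sparsity for $x' = 10\, e^{-(1+1/5)} \log(2np)/p$ and $y' = 5\log(2np)$. Since $(x,y)$-sparsity is monotone in $x$ (smaller $x$ is a weaker condition), the desired bound $x = 3\log(np)/p \le x'$ follows from the strict inequality $3 < 10\, e^{-6/5} \approx 3.01$ once $np$ is large enough, which is ensured by taking $k$ sufficiently large. The corresponding inequality for $y$, however, goes the wrong way: $5\log(np) < 5\log(2np) = y'$, and $(x,y)$-sparsity is strengthened by decreasing $y$.

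The only real subtlety — and hence the main obstacle — is reconciling this $\log 2$ discrepancy in $y$. I would handle it by applying \cref{lem:spread:sparse} with $\Lambda$ taken slightly below $5$, exploiting the fact that the map $\Lambda \mapsto 2\Lambda e^{-(1+1/\Lambda)}$ is continuous and attains $\approx 3.01$ at $\Lambda = 5$. Thus, for $\Lambda$ close enough to $5$ one still has $3 < 2\Lambda e^{-(1+1/\Lambda)}$, while simultaneously $\Lambda (1 + \log 2/\log(np)) \le 5$ for all $np \ge k$ with $k$ sufficiently large; these two facts together recover $x = 3\log(np)/p \le x'$ and $y = 5\log(np) \ge \Lambda\log(2np) = y'$. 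An equivalent route is to rerun the union bound in the proof of \cref{lem:spread:sparse} verbatim, with $\binom{2n}{a}$ replacing $\binom{n}{a}$; the extra $2^a$ factor is absorbed by the same slack $3 < 10\, e^{-6/5}$, which forces the bounding expression $f(a)$ to decay polynomially in $np$. Either route presents no genuine difficulty, and the argument is otherwise identical to the unipartite case.
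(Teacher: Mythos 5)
Your proof is correct and follows the route the paper evidently intends; in fact the paper does not supply a proof for this lemma, stating only that the bipartite lemmas in \S 4.5 are ``essentially identical'' to their non-bipartite counterparts, so your write-up is precisely the reconstruction that was left to the reader. You have also put your finger on the one place where ``essentially identical'' is a bit loose: $G(n,n,p)$ lives on $2n$ vertices, so a verbatim invocation of \cref{lem:spread:sparse} with $\Lambda = 5$ produces $x' = 10e^{-6/5}\log(2np)/p$ and $y' = 5\log(2np)$, and the latter exceeds the target $y = 5\log(np)$, which is the wrong direction for $(x,y)$-sparsity. Both of your remedies are valid. For the first, note that $\Lambda \mapsto 2\Lambda e^{-(1+1/\Lambda)}$ is continuous and increasing (its derivative is $2e^{-(1+1/\Lambda)}(1+1/\Lambda) > 0$) with value $\approx 3.012$ at $\Lambda = 5$; hence one may fix $\Lambda$ slightly below $5$ so that $2\Lambda e^{-(1+1/\Lambda)} \geq 3$ still holds, and then $\Lambda\log(2np) \leq 5\log(np)$ once $\log(np) \geq \Lambda\log 2/(5-\Lambda)$, i.e.\ once $k$ is sufficiently large. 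For the second, rerunning the union bound of \cref{lem:spread:sparse} with $\binom{2n}{a}$ gives
\[
  f(x) = \frac{2enp}{3\log(np)}\left(\frac{3e}{10}\right)^{5\log(np)} = \frac{2e}{3\log(np)}\,(np)^{1-5\log(10/(3e))},
\]
and since $5\log(10/(3e)) \approx 1.02 > 1$ this still decays like a negative power of $np$, so the extra factor of $2^a$ is absorbed for $k$ large, exactly as you say. Either way the argument goes through, and you should probably record one of these two observations explicitly rather than relying on ``identical.''
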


\begin{lemma}\label{lem:gnnp:connect}
  Suppose $p\gg 1/n$
  and let $G\sim G(n,n,p)$.
  Then, \whp{}
  $G$ is a $K$-bi-connector for  $K= 2\log(np)/p$.
\end{lemma}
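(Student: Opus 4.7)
The plan is to follow the same direct union-bound argument used for \cref{lem:gnp:connect}, adapted to the bipartite setting; no new ideas are needed. First I would note that the $K$-bi-connector property is monotone increasing in the size of the subsets, so it suffices to verify the condition for pairs $A_1\subseteq V_1$, $A_2\subseteq V_2$ with $|A_1|=|A_2|=K$, where $V_1,V_2$ denote the two sides of the bipartition of $G\sim G(n,n,p)$.

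For any such fixed pair, there are exactly $K^2$ potential edges between $A_1$ and $A_2$, each present independently with probability $p$, so the probability that no edge is present equals $(1-p)^{K^2}\le e^{-pK^2}$. A union bound over the $\binom{n}{K}^2$ choices of pair then bounds the probability that $G$ fails to be a $K$-bi-connector by
\[
  \binom{n}{K}^2(1-p)^{K^2}\le \left(\left(\frac{en}{K}\right)^{2}e^{-pK}\right)^{K}.
\]

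Substituting $K=2\log(np)/p$, the factor $e^{-pK}$ equals $(np)^{-2}$, which cancels the $(np)^2$ contribution of $(en/K)^2$ up to lower-order factors, leaving a base of order $1/\log^2(np)$. Since the assumption $p\gg 1/n$ ensures $\log(np)\to\infty$, this base is $o(1)$, and raising it to the power $K\to\infty$ yields the desired $o(1)$ bound.

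There is no real obstacle: the bipartite structure only affects the number of pairs under consideration (replacing $\binom{n}{2K}\binom{2K}{K}$ by $\binom{n}{K}^2$, which differs by at most a constant factor in each factor of the product), and the edge count $K^2$ and the per-edge probability $p$ are identical to the non-bipartite case. The conclusion therefore follows by exactly the same calculation as in \cref{lem:gnp:connect}.
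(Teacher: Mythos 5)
Your proof is correct and matches the paper's intent exactly: the paper explicitly states that Lemma \ref{lem:gnnp:connect} is the bipartite analogue of Lemma \ref{lem:gnp:connect} with an "essentially identical" proof, and your union-bound calculation is precisely that adaptation. (One very minor slip: in the non-bipartite proof the paper already uses $\binom{n}{K}^2$ as an upper bound on the number of disjoint pairs, so nothing is "replaced" — but this has no effect on the argument.)
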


\begin{proof}[Proof of \cref{thm:gnnp}]
  Let $\xi=\xi(\eps)$ be the constant guaranteed by \cref{lem:gnnp:deg}, and let
   $\drat=e/(1-\xi)$.
  Write $\delta=\delta(G)$ and $\Delta=\Delta(G)$.
  Note that by \cref{lem:gnnp:deg}, $\Delta\le \drat\delta$  \whp{}.
  Let $C=C(\drat)$ be the constant from \cref{prop:sparseconnector:is:bipartite:rigid}.
  Let $c=\min\{1/70, 1/(2C)\}$,
  and let $d=\min\{c\delta/\log(np),0.8\sqrt{n}\}$.
  
  We assume that $\delta\le enp$, as guaranteed to occur \whp{} by \cref{lem:gnnp:deg}.
  Let $d'=d+1$,
  $x=3\log(np)/p$ and $y=5\log(np)$.
  Note that
  $x = 3\log(np)/p \le 3en\log(np)/\delta = 3cen/d<n/(7d')$.
  By \cref{lem:gnnp:connect}, $G$ is \whp{} a $(2x/3)$-bi-connector.
  By \cref{lem:gnnp:sparse},
  $G$ is \whp{} $(x,y)$-sparse.
  Note that $5 \log(np)\leq\delta/(7d')$, so
  $G$ is \whp{} $(x,\delta/(7d'))$-sparse.
  Furthermore, using the fact that $d\leq c\delta\leq c e n p \le np-1$,
  we obtain $\delta \ge d \log(np)/c \geq 2C d \log(np)> C d' \log(d')$.
  Finally, $d'\le 0.8\sqrt{n}+1 <\sqrt{3n/4}$.
  Hence, by \cref{prop:sparseconnector:is:bipartite:rigid}, $G$ \whp{} admits a strong bipartite $d$-rigid partition.
\end{proof}

\subsection{Giant rigid component}\label{sec:giant}
In this section we prove \cref{thm:rigid_component},
showing the typical existence of a giant $d$-rigid component in sparse binomial random graphs.
The initial step in this direction is the establishment of a lemma,
which posits the presence of a giant ``balanced'' component.
\begin{lemma}\label{lem:giant:balanced}
  For any $\eps\in(0,1)$
  and $c\in(0,1-\eps)$
  there exists $k_0=k_0(\eps,c)>0$ such that for every $k\geq k_0$,
  $G(n,k/n)$ contains \whp{}
  an induced subgraph on at least $(1-\eps)n$ vertices,
  with minimum degree at least $ck$
  and maximum degree at most $4k/\eps$.
\end{lemma}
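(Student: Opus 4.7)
Sample $G\sim G(n,k/n)$ and take $V^{\star}$ to be the $ck$-core of $G':=G-H$, where $H=\{v\in V:\deg_G(v)>4k/\eps\}$. By construction $V^{\star}\subseteq V\setminus H$, so $\Delta(G[V^{\star}])\le 4k/\eps$ automatically, while the coreness condition gives $\delta(G[V^{\star}])\ge ck$. Only the size bound $|V^{\star}|\ge(1-\eps)n$ \whp{} remains to be established.

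For the high-degree set, Chernoff bounds give $\pr(v\in H)\le\exp(-\Omega_{\eps}(k))$, hence $\E|H|\le n\exp(-\Omega_{\eps}(k))$; a direct pairwise covariance computation (any two vertex degrees share at most one edge, so covariances are of order $p$) yields $\var(|H|)=O(\E|H|)$, and Chebyshev then gives $|H|\le\eps n/3$ \whp{} once $k$ is large in terms of $\eps$.

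For the peeled set $S:=V\setminus H\setminus V^{\star}$, the standard peeling characterisation says that each $v\in S$ has fewer than $ck$ neighbours in $V^{\star}$, so $e_G(S,V^{\star})<ck|S|$. We bound $\pr(|S|\ge 2\eps n/3)$ by a union bound over disjoint pairs $(S_0,H_0)$ with $|S_0|=s\ge 2\eps n/3$ and $|H_0|=h\le\eps n/3$: the count $e_G(S_0,V\setminus(S_0\cup H_0))$ is $\Bin(s(n-s-h),k/n)$ with mean $sk(1-(s+h)/n)$. Provided $(s+h)/n\le 1-c-\gamma$ for some $\gamma=\gamma(c,\eps)>0$, this mean exceeds the threshold $cks$ by a constant factor, and Chernoff yields probability $\exp(-\Omega(sk))$; the union-bound factor $\binom{n}{s}\binom{n}{h}\le\exp(O_{\eps}(n))$ is absorbed once $k$ is large in terms of $\eps$ and $c$.

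The residual regime $(s+h)/n>1-c-\gamma$, in which $|V^{\star}|<(c+\gamma)n$, is handled directly: since $\delta(G[V^{\star}])\ge ck$ forces $e(G[V^{\star}])\ge ck|V^{\star}|/2$, an upper-tail Chernoff bound on $\Bin(\binom{v^{\star}}{2},k/n)$ together with a union bound over possible $V^{\star}_0$ rules out $|V^{\star}|$ noticeably below $cn$, while for $|V^{\star}|$ just above $cn$ we instead use the complementary condition that every $u\in V\setminus(V^{\star}\cup H)$ has fewer than $ck$ neighbours in $V^{\star}$, yielding a lower-tail Chernoff bound on the crossing-edge count $e_G(V^{\star},V\setminus(V^{\star}\cup H))$. \textbf{The main obstacle} is the narrow transition window $|V^{\star}|\approx cn$, where neither the ``edges inside'' nor the ``edges crossing'' bound is individually tight; the two edge counts involve disjoint edges and are therefore independent, and combining their tails provides the required decay. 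Choosing $k_0$ sufficiently large in terms of $\eps$ and $c$ makes all union-bound terms sum to $o(1)$.
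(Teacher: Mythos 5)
Your overall plan — delete the high-degree set $H$, take the $ck$-core $V^{\star}$ of what remains, and show the core is almost spanning — is the same as the paper's. The gap is in how you bound the size of the peeled set, and it is a genuine one.

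You correctly identify that the crossing-edge bound $\pr\bigl(e_G(S_0,V^{\star}_0)<ck|S_0|\bigr)$ only decays when $|V^{\star}_0|$ is a constant factor above $cn$ (since the mean is $|S_0|\,|V^{\star}_0|\,k/n$), and that the edges-inside bound $\pr\bigl(e(G[V^{\star}_0])\ge ck|V^{\star}_0|/2\bigr)$ only decays when $|V^{\star}_0|$ is a constant factor below $cn$ (mean $\approx \binom{|V^{\star}_0|}{2}k/n$). But your proposed fix for the window $|V^{\star}_0|\approx cn$ does not work. At $|V^{\star}_0|=cn$ both thresholds sit exactly at their means: the threshold for $e(G[V^{\star}_0])$ is $ck\,v^{\star}/2\approx\binom{v^{\star}}{2}k/n$ and the threshold for the crossing edges is $ck|S_0|\approx|S_0|v^{\star}k/n$. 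Independence of the two disjoint edge counts then gives, for a \emph{fixed} pair $(V^{\star}_0,H_0)$, joint probability roughly $\tfrac12\cdot\tfrac12=\tfrac14$ — not a decaying quantity. This is nowhere near enough to beat the $\exp(\Omega(n))$ union-bound factor over choices of $V^{\star}_0$ (there are $\binom{n}{cn}$ of size exactly $cn$), and a window of width $\Theta(n/\sqrt{k})$ around $v^{\star}=cn$ has this problem. The true probability that a given $V^{\star}_0$ of size $cn$ is the core is of course $\exp(-\Omega(n))$, but the two necessary conditions you use cannot see this.

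The paper's \cref{lem:giant:peeling} sidesteps this by never union bounding over the whole peeled set. It runs the peeling and, if more than $(\eps/2)n$ vertices are ever removed, looks only at the \emph{first} $(\eps/2)n$ removed vertices $A$. At that moment the surviving set $B$ still has size at least $(1-\eps)n$, so the crossing count $e(A,B)$ has mean at least $(\eps/2)(1-\eps)kn$, comfortably above the threshold $(\eps/2)ckn$ since $c<1-\eps$. This always gives good exponential decay, and the union bound is only over pairs $(A,B)$ of fixed sizes rather than over the more delicate set of possible cores. Replacing your whole-peeled-set argument with this prefix version repairs the proof.

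Two minor remarks. Your Chebyshev bound on $|H|$ works, but the paper's argument is simpler: deterministically $|H|\le 2|E(G)|/(4k/\eps)=\eps|E(G)|/(2k)$, and a one-sided Chernoff bound on $|E(G)|$ gives $|H|\le\eps n/2$ \whp{}. Also, the maximality fact you invoke — that every vertex outside the $ck$-core of $G-H$ has fewer than $ck$ neighbours in the core — is correct and potentially useful elsewhere, but the prefix argument does not need it.
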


For the proof of \cref{lem:giant:balanced}
we use the following
generalisation of~\cite{AHHK22+}*{Lemma~5.2}.
\begin{lemma}\label{lem:giant:peeling}
  For any $\eps\in(0,1/2)$
  and $c\in(0,1-2\eps)$
  there exists $k_0=k_0(\eps,c)>0$ such that
  for every $k\ge k_0$,
  $G\sim G(n,k/n)$
  satisfies the following property \whp{}:
  every vertex set $U$ with $|U|\ge (1-\eps)n$
  contains a subset $U'$ with $|U'|\ge (1-2\eps)n$
  such that $\delta(G[U'])\ge ck$.
\end{lemma}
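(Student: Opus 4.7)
The plan is a standard peeling argument combined with a union bound. Given any $U\subseteq V$ with $|U|\ge(1-\eps)n$, I iteratively remove from $U$ any vertex whose degree in the currently induced subgraph falls below $ck$; let $U'$ be the resulting set. By construction $\delta(G[U'])\ge ck$, so it suffices to show that, uniformly over the choice of $U$, the peeling \whp{} removes at most $\eps n$ vertices, which would give $|U'|\ge |U|-\eps n\ge(1-2\eps)n$.

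Suppose that for some $U$ the peeling removes more than $\eps n$ vertices, and let $B$ be the set of the first $\eps n$ peeled vertices, taken in order of removal. Set $T=U\setminus B$; then $|B|=\eps n$, $T\subseteq V\setminus B$, and $|T|\ge(1-2\eps)n$. For every $v\in B$, the set remaining at the moment $v$ was peeled contains $T$ (since only vertices of $B$ could have been peeled before $v$), so $v$ had fewer than $ck$ neighbours in $T$. Hence it is enough to show that \whp{} no pair $(B,T)$ of disjoint vertex sets exists with $|B|=\eps n$, $|T|\ge(1-2\eps)n$, and every vertex of $B$ having fewer than $ck$ neighbours in $T$.

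For a fixed such pair, the neighbour counts from distinct vertices of $B$ into $T$ are independent and each is distributed as $\Bin(|T|,k/n)$, with mean at least $(1-2\eps)k$. Since $c<1-2\eps$, Chernoff's bound (\cref{thm:chernoff}) yields that each count is less than $ck$ with probability at most $e^{-c_0 k}$ for a constant $c_0=c_0(\eps,c)>0$; by independence, the probability for the whole pair is at most $e^{-c_0 k\eps n}$. There are at most $\binom{n}{\eps n}\cdot 2^n\le 4^n$ choices of $(B,T)$, so the union bound gives a failure probability of at most $\exp((2\log 2-c_0\eps k)n)$, which is $o(1)$ as soon as $k\ge k_0:=3\log 2/(c_0\eps)$.

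The argument is essentially routine; the only mildly delicate point is ensuring that the Chernoff exponent $c_0$ depends only on $\eps$ and $c$, which follows because the mean $|T|k/n\ge(1-2\eps)k$ strictly exceeds $ck$ uniformly in $|T|\ge(1-2\eps)n$. Once this is verified, the union bound absorbs the exponential number of choices of $B$ and $T$ for all $k\ge k_0$, and the same $U'$ works for every eligible $U$ simultaneously.
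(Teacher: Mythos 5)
Your proof is correct and takes essentially the same route as the paper's: a peeling argument, a union bound over the $O(4^n)$ ways the first $\eps n$ peeled vertices and the remainder can look, and a Chernoff bound whose exponent scales like $kn$, so that a sufficiently large constant $k$ beats the union bound. The only (cosmetic) difference is in the concentration step: the paper applies a single Chernoff bound to the total edge count $e(A,B)$ between the peeled set $A$ and the remainder $B$, whereas you bound the degree of each peeled vertex into $T$ separately and multiply over independent vertices. Both give the required $e^{-\Omega(kn)}$ decay, and your verification that the exponent $c_0$ depends only on $\eps$ and $c$ (via $\mu\ge(1-2\eps)k>ck$) is the right thing to check.
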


\begin{proof}
  We follow~\cite{AHHK22+}.
  Write $b=1-2\eps$.
  By Chernoff bounds (\cref{thm:chernoff})
  and the union bound,
  the probability that there exist disjoint vertex sets $A,B$
  with $|A|=\eps n$, $|B|\ge bn$ and
  $e(A,B)<\eps ckn$
  is at most
  \[
    2^n\cdot 2^n
    \cdot \pr(\bin(\eps bn^2,k/n) \le \eps ckn)
    = 4^n e^{-\frac{1}{2}(b-c)^2\eps kn}=o(1),
  \]
  for large enough $k$.
  We thus assume from now on that for every disjoint
  $A,B$ with $|A|=\eps n$ and $|B|\ge bn$
  we have that $e(A,B)\ge \eps ckn$.
  Fix $U$ with $|U|\ge (1-\eps)n$.
  We build a sequence $U_0,U_1,\ldots,U_m$  of subsets of $U$ in the following way: set $U_0=U$.
  For every $i\geq 0$, if $U_i$ has a vertex with degree smaller than $ck$ in $G[U_i]$,
  we denote that vertex by $u_{i+1}$, and set $U_{i+1}=U_i\sm\{u_{i+1}\}$;
  otherwise, set $m=i$. Let $U'=U_m$.
  Suppose that $m>\eps n$,
  and let $A=\{u_1,\dots,u_{\eps n}\}$.
  Let $B=U\sm A$, and note that $|B|\ge bn$.
  Note also that every $u\in A$ has less than $ck$ neighbours in $B$.
  Therefore,
  \begin{equation*}
    \eps ckn \le e(A,B) < |A|ck = \eps ckn,
  \end{equation*}
  a contradiction.
  Thus, $m\le \eps n$,
  and $|U'|\ge|U|-\eps n\ge (1-2\eps)n$.
  The statement follows since $\delta(G[U'])\ge ck$.
\end{proof}

\begin{proof}[Proof of \cref{lem:giant:balanced}]
  Let $B$ be the set of vertices of degree
  larger than $4k/\eps$,
  and let $m=|E(G)|$.
  Then, deterministically,
  $|B|\le 2m/(4k/\eps)=\eps m/(2k)$.
  Note that $m$ is distributed as a binomial random
  variable with $\binom{n}{2}$ attempts
  and success probability $k/n$;
  thus, by Chernoff bounds (\cref{thm:chernoff}),
  $\pr(m\ge kn) \le \exp(-0.6kn)=o(1)$.
  Thus, \whp{}, $|B|\le \eps n/2$.
  Let $U=V(G)\sm B$, so $|U|\ge (1-\eps/2)n$.
  By \cref{lem:giant:peeling}, for large enough $k$,
  there exists \whp{}
  $U'\subseteq U$ with $|U'|\ge (1-\eps)n$
  satisfying $\delta(G[U'])\ge c k$.
  Since $U'\subseteq U=V(G)\smallsetminus B$, we also have $\Delta(G[U'])\leq 4k/\eps$.
\end{proof}

Using the fact that $G(n,p)$ is typically $(p,\Theta(\sqrt{np}))$-jumbled
(see~\cite{KS06}),
we can readily infer from \cref{prop:jumbled:is:rigid} that for any $\eps\in(0,1)$,
$G(n,\Omega_\eps(d^2/n))$ includes a $d$-rigid component comprising at least $(1-\eps)n$ vertices.
To further refine the dependence of $p$ on $d$, we apply the same methodology used in previous sections.
This involves showing that the (nearly spanning) induced subgraph found in \cref{lem:giant:balanced}
is a ``sparse connector''.

\begin{proof}[Proof of \cref{thm:rigid_component}]
  Let $G\sim G(k/n)$, for $k$ to be determined later.
  Let $\eps\in(0,1)$ and set $c=(1-\eps)/2$.
  By \cref{lem:giant:balanced},
  there exists $k_0$ such that $k\ge k_0$ implies
  the existence of a set $U$ of size at least $(1-\eps)n$
  with $ck\le\delta(G[U])\le\Delta(G[U])\le 4k/\eps$.
  Write $G'=G[U]$ and $\delta=\delta(G')$.
  Let $\Gamma=4/(c\eps)$,
  and let $C=C(\Gamma)$ be the constant from \cref{prop:sparseconnector:is:rigid}.
  Let $x=3\log(np)/p$ and $y=5\log(np)$.
  By \cref{lem:gnp:sparse},
  there exists $k_1$ such that $k\ge k_1$ implies that
  $G$ (and hence $G'$) is \whp{} $(x,y)$-sparse.
  By \cref{lem:gnp:connect}, for $k\ge 4$,
  $G$ (and hence $G'$) is \whp{} a $(2x/3)$-connector.
  Take $C$ to be sufficiently large so that $k:=Cc^{-1}d\log{d}\ge\max\{k_0,k_1,4\}$
  and that $C\ge 35\log{k}/\log{d}$,
  and set $p=k/n$.
  Note that $\delta\ge Cd\log{d}$
  and that $5 \log(np)=5\log{k}\le C\log{d}/7 \leq\delta/(7d)$, so
  $G'$ is \whp{} $(x,\delta/(7d))$-sparse.
  Thus,
  by \cref{prop:sparseconnector:is:rigid}, $G'$ \whp{} admits a strong $d$-rigid partition,
  and is therefore (by \cref{cor:strong:rigid:partition}) $d$-rigid.
\end{proof}

\section{Dense graphs}\label{sec:dirac}
As we mentioned in the introduction,
\cref{thm:dirac} follows immediately from the following more general statement.
\begin{theorem}\label{thm:common_neighbours}
  Let $\ell=\ell(n)$ satisfy $3\log{n}\leq \ell< n$.
  Let $G=(V,E)$ be an $n$-vertex graph in which
  every pair of vertices has at least $\ell$ common neighbours.
  Then,
  $G$ admits a strong $d$-rigid partition
  for $d=  \ell/(3\log{n})$.
\end{theorem}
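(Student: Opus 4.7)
The plan is to construct a strong $d$-rigid partition through a random assignment and then verify the required connectivity by exhibiting short alternating paths between any two vertices. With $d = \ell/(3\log n)$, I would assign each vertex of $V$ independently and uniformly at random to one of the parts $V_1,\ldots,V_d$, and aim to establish the strong ``common-neighbour'' condition
\[
  (\star) \quad V_i \cap N(u) \cap N(v) \neq \es \text{ for every pair } u,v \in V \text{ and every } i \in [d].
\]

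The first step would be a first-moment estimate. Since $u$ and $v$ share at least $\ell$ common neighbours in $G$, and each lands in $V_i$ independently with probability $1/d$, the probability that $V_i$ contains none of them is at most $(1-1/d)^\ell \leq e^{-\ell/d} = n^{-3}$. A union bound over the $\binom{n}{2} d \leq n^3/(6\log n)$ bad events gives a total failure probability of at most $1/(6\log n) < 1$, so a partition satisfying $(\star)$ exists. The calibration $d = \ell/(3\log n)$ is precisely the largest $d$ for which this per-event $n^{-3}$ bound absorbs the $O(n^3)$ union-bound events; I expect this step to be essentially tight.

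The second step would be to verify that $(\star)$ implies that $G[V_i,V_j]$ is connected for every $1 \leq i \leq j \leq d$. For $i=j$, any two vertices $u,v\in V_i$ share a common neighbour $w \in V_i$ by $(\star)$, yielding the $2$-path $u \sim w \sim v$ in $G[V_i]$. For $i<j$, two vertices lying in the same part of $V_i \cup V_j$ are similarly joined by a length-two path through the other part. The only mildly subtle case is $u \in V_i$ and $v \in V_j$: here a single common neighbour would lie on the wrong side of the bipartition, so I would apply $(\star)$ twice---first with part $V_j$ to find $w \in V_j \cap N(u) \cap N(v)$ (automatically $w \neq v$), then with the pair $\{w,v\} \subset V_j$ and part $V_i$ to find $y \in V_i \cap N(w) \cap N(v)$---producing the alternating path $u \sim w \sim y \sim v$ in $G[V_i,V_j]$. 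Condition $(\star)$ also forces each $V_i$ to be nonempty, so the random colouring is genuinely a partition.

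I do not anticipate any genuine obstacle; the argument is short and the union-bound calculation is only marginally tight. The slight care needed is in the ``cross-part'' case $u \in V_i$, $v \in V_j$, where the naive 2-path through a common neighbour fails but the two-step application of $(\star)$ resolves this cleanly. The boundary case $d=1$ (which occurs when $\ell < 6\log n$) can be handled separately by noting that the common-neighbour hypothesis already forces $G$ to have diameter at most $2$, so $G$ itself is connected and $V_1 = V$ is a trivial strong $1$-rigid partition.
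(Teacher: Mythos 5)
Your proof is correct and follows essentially the same approach as the paper: a random $d$-colouring of $V$, a union bound showing some colouring makes every pair of vertices have a common neighbour in every colour class, and then the same short-path verification (including the length-three alternating path for the cross-part case). The only cosmetic difference is that you assign vertices to parts independently and bound $(1-1/d)^\ell$, while the paper uses a uniformly random equipartition into parts of size $n/d$ and a hypergeometric estimate; both calibrations give the same threshold $d = \ell/(3\log n)$.
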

For the proof of \cref{thm:common_neighbours}, we will need the following simple lemma.

\begin{lemma}\label{lemma:many_neighbours}
Let $G=(V,E)$ be an $n$-vertex graph in which every pair of vertices has at least $\ell$ common neighbours. Let $U$ be a uniformly random subset of $V$ of size $t$. Then, with probability at least $1-n^{2}e^{-\ell t/n}$, every pair of vertices in $V$ has a common neighbour in $U$.
\end{lemma}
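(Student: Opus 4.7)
The plan is a direct union bound argument. Fix a pair of vertices $u, v \in V$ and let $S = N(u) \cap N(v)$ denote their common neighbourhood, so that by assumption $|S| \geq \ell$. The probability that the uniformly random $t$-subset $U$ contains no common neighbour of $u$ and $v$ is exactly the probability that $U$ avoids $S$, which equals $\binom{n-|S|}{t}/\binom{n}{t} \leq \binom{n-\ell}{t}/\binom{n}{t}$. Writing this ratio as the telescoping product $\prod_{i=0}^{t-1} \frac{n-\ell-i}{n-i} = \prod_{i=0}^{t-1}\bigl(1 - \tfrac{\ell}{n-i}\bigr)$ and using $\tfrac{\ell}{n-i} \geq \tfrac{\ell}{n}$ termwise, the product is bounded by $(1-\ell/n)^t \leq e^{-\ell t/n}$.

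A union bound over the at most $\binom{n}{2} \leq n^2$ pairs then yields a failure probability at most $n^2 e^{-\ell t/n}$, as required. There is no real obstacle here: the argument is routine and the bound $n^2$ in the statement (rather than the tighter $\binom{n}{2}$) comfortably absorbs the constant from the union bound.
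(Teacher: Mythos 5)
Your proof is correct and takes essentially the same approach as the paper: both bound the failure probability for a fixed pair by $\binom{n-\ell}{t}/\binom{n}{t}$, expand this ratio as a telescoping product, and bound each factor to obtain $e^{-\ell t/n}$ (the paper factors as $\prod_{i=1}^{\ell}(1-\tfrac{t}{n-\ell+i})\le(1-t/n)^\ell$ while you factor as $\prod_{i=0}^{t-1}(1-\tfrac{\ell}{n-i})\le(1-\ell/n)^t$, two symmetric ways of writing the same hypergeometric probability). The union bound over $\binom{n}{2}\le n^2$ pairs finishes it identically.
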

\begin{proof}
  Fix a pair of distinct vertices $u,v\in V$. Let $p$ be the probability that $U$ is disjoint from the set of common neighbours of $u,v$. Then,
  \[
      p\leq \frac{\binom{n-\ell}{t}}{\binom{n}{t}}=\prod_{i=1}^{\ell}\left(1-\frac{t}{n-\ell+i}\right) \leq \left(1-\frac{t}{n}\right)^{\ell}\leq e^{-\ell t/n}.
  \]
  The result follows by the union bound.
\end{proof}

\begin{proof}[Proof of \cref{thm:common_neighbours}]

Let $\ell=\ell(n)$ and let $d=\ell/(3\log{n})$.
  Partition our vertex set $V$ at random into sets $V_1,\ldots,V_d$, each of size $n/d= 3n\log{n}/\ell$. 
  By Lemma \ref{lemma:many_neighbours}, for every $1\leq i\leq d$,
  the probability that there is a pair of vertices in $V$ without a common neighbour in $V_i$ is at most $n^2 e^{-3\log{n}}=1/n$.
  By the union bound,
  the probability that there exist $i\in[d]$ and a pair of vertices in $V$
  without a common neighbour in $V_i$ is at most $d/n <1$. 
  Therefore, there exists a partition $V_1,\ldots,V_d$ such that every pair of vertices in $V$ has a common neighbour in $V_i$ for every $1\leq i\leq d$.

  We are left to show that $V_1,\ldots,V_d$ is a strong $d$-rigid partition of $G$.
  Indeed, for all $1\leq i\leq d$, $G[V_i]$ is connected,
  since every pair of distinct vertices $u,v\in V_i$ has a common neighbour in $V_i$.
  Similarly, let $i\neq j$, and let $u,v\in V_i\cup V_j$. If $u,v\in V_i$,
  then they have a common neighbour in $V_j$, so there is a path between them in $G[V_i,V_j]$.
  If $u\in V_i$ and $v\in V_j$, we divide into two cases: if $u$ and $v$ are adjacent, then we are done.
  Otherwise,  let $w$ be a neighbour of $u$ in $V_j$, and let $z$ be a common neighbour of $v$ and $w$ in $V_i$.
  Then $(u,w,z,v)$ is a path from $u$ to $v$ in $G[V_i,V_j]$. So, $G[V_i,V_j]$ is connected for all $i\neq j$.
  Therefore, $V_1,\ldots,V_d$ is a strong $d$-rigid partition of $G$.
\end{proof}

\begin{proof}[Proof of \cref{thm:dirac}]
  Let $u,v\in V$. Then, the number of common neighbours of $u$ and $v$ is at least
  $\deg(u)+\deg(v)-n\geq 2\ell$.
  Therefore, by \cref{thm:common_neighbours},
  $G$ admits a strong $d$-rigid partition for $d= 2\ell/(3\log{n})$.
\end{proof}

For the proof of \cref{thm:dirac:small},
we use the following well-known result concerning the existence of $k$-connected subgraphs.
Denote by $\bar{d}(G)$ the average degree of $G$.
\begin{theorem}[Mader~\cite{Mad72} (see also~\cite{Die})]\label{thm:mader}
  For every $k\ge 1$,
  if $\bar{d}(G)\ge 4k$,
  then $G$ contains a $(k+1)$-connected subgraph $H$
  with $\bar{d}(H)>\bar{d}(G)-2k$.
\end{theorem}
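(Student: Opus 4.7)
The plan is a classical extremal/minimality argument. Put $\gamma := \bar{d}(G)/2 \ge 2k$ and consider the collection $\mathcal{H}$ of subgraphs $H' \subseteq G$ satisfying
\[
  |V(H')| \ge 2k \quad \text{and} \quad |E(H')| > \gamma\bigl(|V(H')|-k\bigr). \qquad(\ast)
\]
First I would verify $\mathcal{H} \ne \es$ by checking that $G$ itself belongs: $|V(G)| \ge \bar{d}(G)+1 \ge 4k+1$, and $|E(G)| = \gamma|V(G)| > \gamma(|V(G)|-k)$ since $\gamma k > 0$. Then I pick $H \in \mathcal{H}$ of minimum order; the rest of the argument shows that this $H$ meets both conclusions.

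Next I would establish two basic properties of $H$. First, $|V(H)| \ne 2k$, since a $2k$-vertex graph has at most $\binom{2k}{2} = 2k^2-k$ edges while $(\ast)$ with $|V(H')|=2k$ demands more than $\gamma k \ge 2k^2$; hence $|V(H)| \ge 2k+1$. Second, $\delta(H) > \gamma$: otherwise removing a vertex $v$ with $d_H(v) \le \gamma$ yields $H-v$ with $|V(H-v)| = |V(H)|-1 \ge 2k$ and
\[
  |E(H-v)| \ge |E(H)| - \gamma > \gamma\bigl(|V(H)|-k\bigr) - \gamma = \gamma\bigl(|V(H-v)|-k\bigr),
\]
placing $H-v$ in $\mathcal{H}$ and contradicting the minimality of $|V(H)|$. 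The average-degree bound then follows for free: since $\delta(H) > \gamma$ forces $|V(H)| > \gamma$, dividing $(\ast)$ through by $|V(H)|/2$ gives $\bar{d}(H) > 2\gamma - 2\gamma k/|V(H)| > 2\gamma - 2k = \bar{d}(G) - 2k$.

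The decisive step is showing $H$ is $(k+1)$-connected. Suppose toward a contradiction that some $S \subseteq V(H)$ with $|S| \le k$ separates $H$ into nonempty pieces, write $V(H)\setminus S = U_1 \sqcup U_2$ with no edges between $U_1$ and $U_2$, and set $H_i = H[U_i \cup S]$. Any $v \in U_i$ has all its $H$-neighbours in $U_i \cup S$ and $d_H(v) > \gamma \ge 2k$, hence $|V(H_i)| \ge 2k+2$, so $H_i$ is a candidate for $\mathcal{H}$; since $|V(H_i)| < |V(H)|$, the minimality of $|V(H)|$ forces $|E(H_i)| \le \gamma(|V(H_i)|-k)$ for $i=1,2$. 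Using $|V(H_1)|+|V(H_2)| = |V(H)|+|S|$ together with the identity $|E(H)| = |E(H_1)|+|E(H_2)|-|E(H[S])|$, summing gives
\[
  |E(H)| \le \gamma\bigl(|V(H)|+|S|-2k\bigr) \le \gamma\bigl(|V(H)|-k\bigr),
\]
contradicting $(\ast)$. I expect the main obstacle to be calibrating $(\ast)$ correctly: the threshold $\gamma(|V(H')|-k)$ is tuned so that vertex deletion loses at most $\gamma$ edges (exactly what $(\ast)$ can absorb), while a separator of size at most $k$ produces slack $|S|-2k \le -k$ in the vertex count, which is precisely what drives the final contradiction.
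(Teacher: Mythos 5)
Your proof is correct, and it is essentially the standard textbook argument for Mader's theorem (the one in Diestel, which is exactly the reference the paper points to alongside Mader's original): define the threshold $|E(H')| > \gamma(|V(H')| - k)$ on subgraphs of order at least $2k$, take a minimum-order member $H$, deduce $\delta(H) > \gamma$ and hence the average-degree bound, and kill any separator $S$ with $|S|\le k$ by applying the failure of the minimality-defining inequality to the two pieces $H[U_i\cup S]$ and summing. The paper itself does not reprove this result, so there is nothing to contrast beyond noting that your calibration of the threshold, the order lower bound $|V(H_i)|\ge 2k+2$, and the final edge-count all match the cited source.
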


\begin{proof}[Proof of \cref{thm:dirac:small}]
  Let $C$ be the constant from \cref{thm:connectivity},
  take $c=\min\{\sqrt{1/8C},1/9\}$,
  and let $k=C(d+1)^2{\log^2}{n}$.
  Thus, $k\le n/8$.
  Since $\bar{d}(G)\ge\delta(G)\ge n/2+d\ge 4k$,
  by \cref{thm:mader}, $G$ contains a subgraph $H$ which is $k$-connected
  and has $\bar{d}(H)\ge n/2+d-2k\ge n/4$.
  Thus, by \cref{thm:connectivity}, $H$ is $(d+1)$-rigid and $|V(H)|\ge n/4$.
  Let $V_1$ be a maximum-sized vertex set for which $G[V_1]$ is $(d+1)$-rigid.
  If $V_1=V$, we are done, so assume otherwise.
  Write $n_1=|V_1|\ge n/4$.
  Let $V_2=V\sm V_1$ and write $G_2=G[V_2]$ and $n_2=|V_2|\le 3n/4$.
  Observe that every $v\in V_2$ has at most $d$ neighbours in $V_1$
  (as otherwise $V_1\cup\{v\}$ would have been a larger $(d+1)$-rigid component).
  Thus, $\delta(G_2)\ge\delta(G)-d\ge n/2$.
  In particular, $n_2 > n/2$, and therefore $n_2>n_1$.
  Set $\ell=\frac{3}{2}(d+1)\log{n_2}$,
  and note that $\ell\le \sqrt{n_2}/6\le n_2/6$.
  Thus, $\delta(G_2)\ge n/2\ge 2n_2/3=\left(\frac{1}{2}+\frac{1}{6}\right)n_2\ge\frac{1}{2}n_2+\ell$.
  By \cref{thm:dirac},
  $G_2$ is $(d+1)$-rigid, contradicting the maximality of $V_1$;
  thus $V_1=V$.
\end{proof}

Let us remark that applying Vill\'anyi's recent result~\cite{villanyi2023+every}*{Theorem 1.1}
instead of \cref{thm:connectivity}
allows for the condition on $d$ in \cref{thm:dirac:small} to be extended to $d=O(\sqrt{n})$.

\subsection*{Acknowledgements}
The second author thanks Eran Nevo, Yuval Peled and Orit Raz for many useful discussions.

\bibliography{library}

\end{document}